\newtheorem{thm}{Theorem}[section]
\newcommand{\bt}{\begin{thm}}
\newcommand{\et}{\end{thm}}
\newtheorem{cor}[thm]{Corollary}  
\newcommand{\bc}{\begin{cor}}
\newcommand{\ec}{\end{cor}}
\newtheorem{lem}[thm]{Lemma}   
\newcommand{\bl}{\begin{lem}}
\newcommand{\el}{\end{lem}}
\newtheorem{prop}[thm]{Proposition}
\newcommand{\bp}{\begin{prop}}
\newcommand{\ep}{\end{prop}}
\newtheorem{defn}[thm]{Definition}
\newcommand{\bd}{\begin{defn}}    
\newcommand{\ed}{\end{defn}}
\newtheorem{rmrk}[thm]{Remark}
\newtheorem{ntn}[thm]{Notation}
\newcommand{\R}{\mathbb{R}}
\newcommand{\N}{\mathbb{N}}
\DeclareMathOperator{\Vol}{Vol}
\DeclareMathOperator{\Diam}{Diam}
\DeclareMathOperator{\Area}{Area}
\def\Xint#1{\mathchoice
{\XXint\displaystyle\textstyle{#1}}%
{\XXint\textstyle\scriptstyle{#1}}%
{\XXint\scriptstyle\scriptscriptstyle{#1}}%
{\XXint\scriptscriptstyle\scriptscriptstyle{#1}}%
\!\int}
\def\XXint#1#2#3{{\setbox0=\hbox{$#1{#2#3}{\int}$ }
\vcenter{\hbox{$#2#3$ }}\kern-.6\wd0}}
\def\dashint{\Xint-}
\begin{document}

\title{Stability of the Positive Mass and Torus Rigidty Theorems Under Integral Curvature Bounds}
\author{Brian Allen}
\address[]{Lehman College, CUNY}
\urladdr{\url{https://sites.google.com/view/brian-allen}}

\author{Edward Bryden}
\address{Universiteit Antwerpen}
\urladdr{\url{https://etbryden.com/}}

\author{Demetre Kazaras}
\address{Duke University}
\urladdr{\url{https://sites.google.com/view/dkazaras-homepage}}

\date{\today}

\begin{abstract}
  Work of D. Stern \cite{Stern-19} and Bray-Kazaras-Khuri-Stern \cite{BKKS19} provide differential-geometric identities which relate the scalar curvature of Riemannian $3$-manifolds to global invariants in terms of harmonic functions. These quantitative formulas are useful for stability results \cites{Stern-19, KKL-2021} and show promise for more applications of this type. 
  In this paper, we analyze harmonic maps to flat model spaces in order to address conjectures concerning the geometric stability of the positive mass theorem and the Geroch conjecture. By imposing integral Ricci curvature and isoperimetric bounds, we leverage the previously mentioned formulas to establish strong control on these harmonic maps. When the mass of an asymptotically flat manifold is sufficiently small or when a Riemannian torus has almost non-negative scalar curvature, we upgrade the maps to diffeomorphisms and give quantitative H{\"o}lder closeness to the model spaces.
\end{abstract}

\maketitle 
\section{Introduction}

The geometric control which follows from integral bounds on Ricci curvature is well understood. This theory has been developed by Anderson \cite{Anderson-Ricci}, Anderson-Cheeger \cite{Anderson-Cheeger}, Colding \cites{Colding-shape,Colding-volume},  Cheeger-Colding \cite{Cheeger-Colding-1}, Dai-Wang-Zhang \cite{Dai_Wei_Zhang-2018},  Petersen \cites{Petersen-97,Petersen-2006}, Petersen-Sprouse \cite{Petersen-Sprouse-integral},  Petersen-Wei \cites{Petersen_Wei-2001,Petersen-Wei-integral1,Petersen-Wei-integral2}, Gao \cite{Gao-integral1}, and Yang \cites{DYang-integral1,DYang-integral2,DYang-integral3}. Readers unfamiliar with this convergence theory are encouraged to consult Petersen's fantastic survey article \cite{Petersen-Survey}. Speaking generally, methods in this area are based upon harmonic coordinate systems, which directly relate the metric to its Ricci curvature by an elliptic PDE. Through analysis of this equation, one is able to show that the class of manifolds with integral bounds on Ricci (or Riemann) curvature and injectivity radius bounds (or volume growth of balls) enjoys a system of harmonic coordinate charts in which the metric is controlled in Sobolev and H{\"o}lder spaces.
However, the arguments which obtain this control often proceed by contradiction, and the resulting metric control is generally not explicitly quantitative in terms of given geometric bounds.

In this paper we take a new approach to the harmonic coordinate method described above. On asymptotically flat $3$-manifolds and certain compact $3$-manifolds, we study three distinct harmonic functions which encode global geometric and topological information, but generally do not define a global system of coordinates. These harmonic functions are related to the scalar curvature of $3$-tori by Stern \cite{Stern-19} and the mass of asymptotically flat $3$-manifolds by Bray-Kazaras-Khuri-Stern \cite{BKKS19}. By using these breakthrough formulas in tandem with the geometric controls following from integral Ricci curvature and isoperimetric bounds, we strongly control the regularity and behavior of the harmonic functions.  Next, when the appropriate quantities are chosen to be small enough, we are able to show that these harmonic functions produce a global coordinate system. This global coordinate system is the source of the topological stability as well as quantitative H{\"o}lder stability for both the positive mass theorem and scalar torus rigidity theorem. For the positive mass theorem, this stability is explicitly computable in terms of integral Riemann curvature and isoperimetric bounds. In the case of the scalar torus rigidity theorem, the stability remains implicitly quantitative due to the use of a contradiction argument in the spirit of Petersen.

The positive mass theorem states that a complete, asymptotically flat manifold with non-negative scalar curvature has positive ADM mass. This was first proved by Schoen-Yau \cite{SchoenYauPMT} using minimal surfaces and later by Witten \cite{Witten-PMT} using spinors and the Lichnerowicz formula. The geometric stability of the positive mass theorem was first stated by Huisken-Illmanen \cite{Huisken-Ilmanen} in terms of Gromov-Hausdorff convergence and more recently by Lee-Sormani \cite{LeeSormani1} in terms of Sormani-Wenger intrinsic flat convergence. Recently, Bray-Kazaras-Khuri-Stern \cite{BKKS19} gave a new proof of the positive mass theorem in dimension $3$ with a formula which relates the mass and scalar curvature to harmonic functions defined on the asymptotically flat manifold. This formula was then used by Kazaras-Khuri-Lee \cite{KKL-2021} to show Gromov-Hausdorff convergence for a sequence of asymptotically flat manifolds with pointwise lower bounds on Ricci curvature whose mass is tending to zero. In this paper we also use this formula to obtain quantitative $C^{0,\gamma}$-stability, for some $\gamma\in(0,1)$, under integral Ricci curvature bounds and isoperimetric constants for metric balls. It is interesting to compare our methods with those of Finster-Bray \cite{Finster-Bray-99}, Finster-Kath \cite{Finster-Kath-02}, and Finster \cite{Finster-09}, which are based on a spinorial mass formula due to Witten \cite{Witten-PMT}. In their work, an isoperimetric condition is used to obtain a relation between the mass and the $W^{1,2}$- and $L^\infty$-norms of the Riemann curvature. Stability of the positive mass theorem in the asymptotically flat setting has also been studied by Allen \cite{Allen18}, Bryden \cite{Bryden20}, Bryden-Khuri-Sormani \cite{BKS}, Huang-Lee-Sormani \cite{HLS}, Huang-Lee-Perales \cite{Huang-Lee-Perales}, Lee-Sormani \cite{LeeSormani1}, Sormani-Stavrov \cite{Sormani-Stavrov-1}, and the references therein.

The main tool we use to study the geometric stability of the positive mass theorem is the following formula relating the ADM mass to scalar curvature through an asymptotically linear harmonic function.
\begin{thm}[Theorem 1.2 of \cite{BKKS19}]
    Let $(M,g)$ be an orientable complete asymptotically flat manifold with no spherical classes in $H_2(M;\mathbb{Z})$, and let $u$ be an asymptotically linear harmonic function (see Section \ref{sec-Asymptotic}). Then, the ADM mass satisfies
    \begin{equation}\label{ADM-formula}
        m(g)\geq\frac{1}{16\pi}\int_{M}\frac{|\nabla^2u|^2}{|\nabla u|}+R_{g}|\nabla u|dV_g,
    \end{equation}
    where $R_g$ denotes the scalar curvature of $(M,g)$.
\end{thm}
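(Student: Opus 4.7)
The plan is to derive \eqref{ADM-formula} as the integrated form of a pointwise divergence identity: one combines the Bochner formula for the harmonic function $u$ with the traced Gauss equation on its regular level sets, then extracts the ADM mass from the resulting boundary flux at infinity.

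On the open set $\{|\nabla u|>0\}$, I would start from the Bochner identity
\[
\tfrac12\Delta|\nabla u|^2=|\nabla^2u|^2+\mathrm{Ric}(\nabla u,\nabla u),
\]
which rearranges to $|\nabla u|\,\Delta|\nabla u|=|\nabla^2u|^2-|\nabla|\nabla u||^2+\mathrm{Ric}(\nabla u,\nabla u)$. Let $\nu=\nabla u/|\nabla u|$, and let $H$, $II$ denote the mean curvature and second fundamental form of the level set $\Sigma_t=u^{-1}(t)$. Harmonicity gives $\nabla^2u(\nu,\nu)=H|\nabla u|$, and the traced Gauss equation in dimension three reads $R_g=2K_{\Sigma_t}+2\mathrm{Ric}(\nu,\nu)-H^2+|II|^2$. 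Substituting and dividing by $|\nabla u|$ produces a pointwise identity of the schematic form
\[
\tfrac{|\nabla^2u|^2}{|\nabla u|}+R_g|\nabla u|=2K_{\Sigma_t}|\nabla u|+2\Delta|\nabla u|+(\text{manifestly nonnegative Kato remainder}),
\]
where the remainder uses that $|\nabla^2u|^2-|\nabla|\nabla u||^2\geq 0$ on regular points.

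Next, I would integrate this identity over a large region $M\cap B_r$, use the coarea formula on the $K_{\Sigma_t}$ term, and apply Gauss--Bonnet on regular fibers to rewrite $\int_{\Sigma_t}K_{\Sigma_t}\,dA=2\pi\chi(\Sigma_t)$. The topological hypothesis that $H_2(M;\mathbb Z)$ has no spherical classes, combined with Sard's theorem and the behavior of $u$ at infinity (so that any large regular level set is eventually a single asymptotically planar component outside a compact region, with compact components representing nontrivial homology classes), forces $\chi(\Sigma_t)\leq 0$ on almost every level. Thus this curvature contribution works in favor of the inequality. The $\Delta|\nabla u|$ term integrates by parts to a boundary flux on $\partial B_r$ which, using the asymptotic expansion $u\sim a\,x^1$ of an asymptotically linear harmonic function together with the asymptotic flatness of $g$, converges as $r\to\infty$ to $16\pi\,m(g)\cdot a^2$. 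Normalizing $a=1$ yields \eqref{ADM-formula}.

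The main obstacle is rigorous treatment of the critical set $\{|\nabla u|=0\}$, where the divisor $1/|\nabla u|$ is singular. The standard remedy is to replace $|\nabla u|$ by the regularization $(|\nabla u|^2+\epsilon^2)^{1/2}$, run the entire Bochner-plus-Gauss argument at the $\epsilon$-level, and then pass $\epsilon\to 0$; the resulting error terms are controlled by the low-dimensional (at most Hausdorff-one in dimension three) structure of the nodal set of a non-constant harmonic function, together with $W^{2,p}$ elliptic estimates for $u$. A secondary issue is verifying that the subleading terms in the asymptotic expansion of $u$ and $g$ contribute negligibly to the boundary flux as $r\to\infty$, which follows from the weighted decay built into the asymptotically flat and asymptotically linear assumptions.
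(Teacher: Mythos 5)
This statement is imported verbatim from \cite{BKKS19}; the paper you are reading gives no proof of it, so the only meaningful comparison is with the original argument of Bray--Kazaras--Khuri--Stern. Your outline reproduces that argument's architecture faithfully: the Bochner identity for $|\nabla u|$, the traced Gauss equation on regular level sets, Gauss--Bonnet fed through the coarea formula, the $\sqrt{|\nabla u|^2+\epsilon^2}$ regularization of the critical set, and the identification of a boundary flux at infinity with the ADM mass. These are exactly the ingredients of the actual proof.

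There is, however, one step as you state it that is false, and its correction is the real content of the topological bookkeeping. You claim the hypotheses force $\chi(\Sigma_t)\leq 0$ for almost every level. This already fails for $u=x^1$ on flat $\mathbb{R}^3$, where every level set is a plane with $\chi=1$. The correct accounting truncates to the region $\Omega_r$ inside the coordinate sphere, so that $\Sigma_t\cap\Omega_r$ is compact with boundary: the component meeting $\partial\Omega_r$ (the asymptotically planar one) satisfies only $\chi\leq 1$, while the no-spherical-classes hypothesis --- together with the maximum principle, which rules out spherical components enclosing regions on which $u$ would have to be constant --- gives $\chi\leq 0$ only for the \emph{closed} components. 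The excess $+1$ per level, integrated over the range of $u$ on $\partial\Omega_r$ (which grows linearly in $r$), produces a contribution of order $r$ that cannot be discarded; it must cancel exactly against the leading-order part of the flux $\int_{\partial\Omega_r}\partial_\nu|\nabla u|\,dA$, and the mass emerges only from the subleading term of that flux. Your sketch suppresses this divergent pair and asserts the flux simply converges to $16\pi\,m(g)$, which is where the argument as written breaks. Restoring the cancellation --- in \cite{BKKS19} this is packaged as the nonnegativity of a term of the form $\int\bigl(1-\chi(\Sigma_t)\bigr)\,dt$ together with a careful asymptotic expansion of the boundary integrand --- recovers the inequality with the constant $\tfrac{1}{16\pi}$. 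The remaining points you flag (Kato's inequality for the remainder, Sard's theorem, the regularization near the nodal set, and the negligibility of subleading asymptotic terms) are handled essentially as you describe.
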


The first main result leverages controlled asymptotic falloff with good control on the local analysis of Sobolev functions in order to show that ADM mass strongly controls geometry. In order to effectively utilize the mass formula \eqref{ADM-formula}, we will need Sobolev and Poincar{\'e} inequalities on our manifold. Crucially, these inequalities are known to be equivalent to isoperimetric constants (See Li \cite{li_2012}). Let us now recall the isoperimetric constant central to our result.

\begin{defn}[Definition 9.2 in \cite{li_2012}]
    Let us define $IN_{\alpha}(M^{n},g)$ for $1\leq\alpha\leq\tfrac{n}{n-1}$ as follows:
    \begin{equation}
        IN_{\alpha}(M^{n},g)=\inf\left\lbrace\frac{\Area_g(S)}{\min\{\Vol_g(\Omega_1),\Vol_g(\Omega_2)\}^{\frac{1}{\alpha}}}:M=\Omega_1\cup S \cup \Omega_2,\partial \Omega_1=S=\partial\Omega_2\right\rbrace.
    \end{equation}
    $IN_{\alpha}(M^{n},g)$ is called the  Neumann $\alpha-$isoperimetric constant of $M$.
\end{defn}
With the above in hand, we can make precise what it means for a class of asymptotically flat Riemannian manifolds to have controlled asymptotic falloff and good control on the local analysis of Sobolev functions, by defining the following class of asymptotically flat Riemannian manifolds.
\begin{defn}\label{the-family-of-metrics}
    Fix $b,\bar{m},\Lambda,\kappa>0$, $\tau>\tfrac{1}{2}$, $p>1$, $\alpha\in[1,\tfrac{3}{2}]$. An oriented connected complete asymptotically flat $3$-dimensional Riemannian manifold $(M,g)$ is said to be {\emph{homologically simple, $(b,\tau,\bar{m})$ asymptotically flat, $(\Lambda,\alpha)$ Neumann-isoperimetrically bounded, and $\kappa$ curvature-$L^{p}$ bounded}} if
    \begin{enumerate}
        \item there are no spherical classes in $H_2(M;\mathbb{Z})$ and $\partial M=\emptyset$,
        \item there exists a coordinate chart $\Phi:M\setminus\Omega\rightarrow\mathbb{R}^{3}\setminus B_{1}(0)$ such that on $\mathbb{R}^{3}\setminus B_{1}(0)$ we have
        \begin{equation*}
            |D^{(k)}\left(g_{ij}-\delta_{ij}\right)|(x)\leq b|x|^{-\tau-k},
        \end{equation*}
        where $k=0,1,$ and $2$,
        \item $R_g \ge 0,$ and $m_{\mathrm{ADM}}(g)\leq\bar{m}$,
        \item for all metric balls, $B_r(x)\subset M$, we have
        \begin{equation*}
            IN_{\alpha}\left(B_r(x)\right)\geq \Lambda,
        \end{equation*}
        \item $\|\mathrm{Rc}_{g}\|_{L^{p}}\leq\kappa.$
    \end{enumerate}
    We will denote the family of such $3$-dimensional asymptotically flat Riemannian manifolds by $\mathcal{M}\left(b,\tau,\bar{m},\Lambda,\alpha,\kappa,p\right)$. In most of what follows, we will impose $\alpha=\tfrac{3}{2}$ and $p=3$. In this case, we adopt the shorthand $\mathcal{M}(b,\tau,\bar{m},\Lambda,\kappa)=\mathcal{M}\left(b,\tau,\bar{m},\Lambda,\tfrac{3}{2},\kappa,3\right)$.
\end{defn}

\begin{figure}
    \centering
    \includegraphics[width = 6cm]{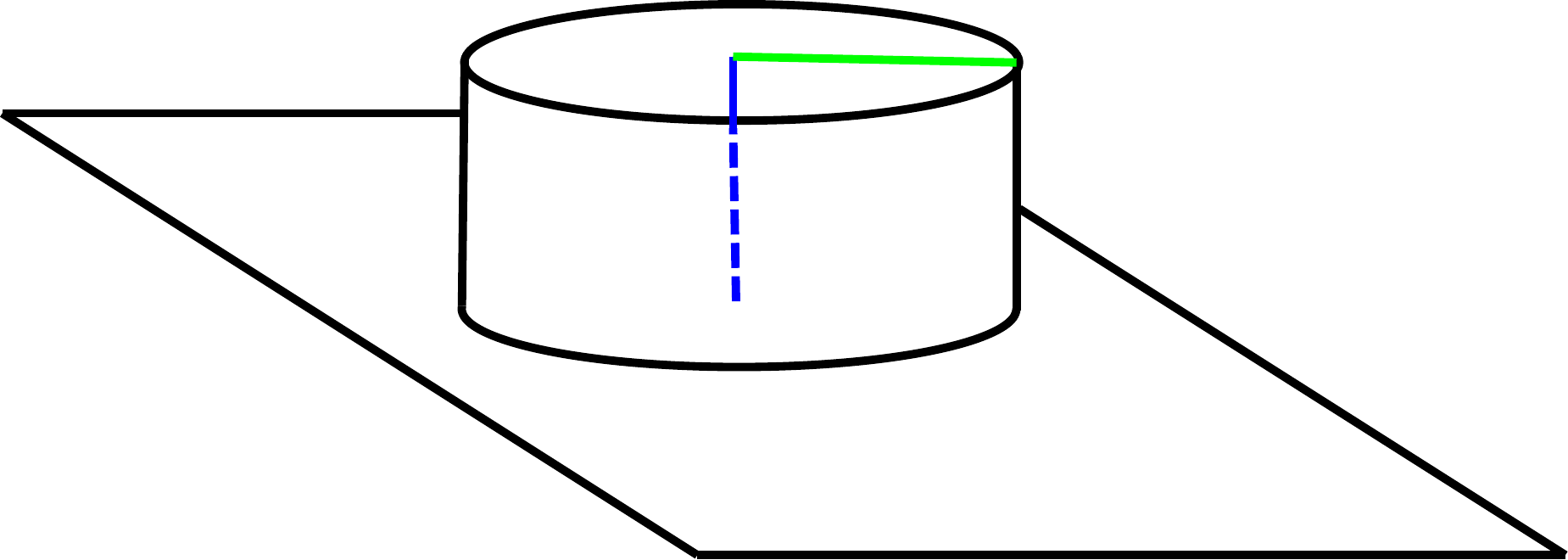}
    \caption{The above short and stout bump is Neumann $\Lambda$-isoperimetrically bounded if the ratio of its radius (in green) to its height (in blue) is bounded above and below. By taking the radius and the height to be arbitrarily small, one obtains Neumann $\Lambda$-isoperimetrically bounded spaces with arbitrarily small injectivity radius.}
    \label{fig:coin_bump}
\end{figure}

\begin{rmrk}\label{rmrk:int_Rm_from_int_Ric}
    Ricci curvature has a unique character in dimension $3$. In particular, all components of a $3$-manifold's Riemann tensor can be algebraically expressed in terms of its Ricci curvature. This means that condition $(5)$ in Definition \ref{the-family-of-metrics} is actually equivalent to a bound on $\|\mathrm{Rm}_g\|$. 
\end{rmrk}
The first main result of the paper gives quantitative stability of the positive mass theorem.
\begin{thm}\label{Thm-Mass Stability}
    Let $b,\bar m,\Lambda,\kappa>0, \tau>\tfrac12$, and $\gamma\in(0,\tfrac{1}{2})$ be given parameters. For any $\varepsilon>0$ there exists $\delta=\delta(b,\tau,\bar{m},\Lambda,\kappa,\varepsilon,\gamma)>0$ such that the following holds: if $(M,g)$ lies in $\mathcal{M}\left(b,\tau,\bar{m},\Lambda,\kappa\right)$
    and $m_{\mathrm{ADM}}(g)\leq\delta$, then $M$ is diffeomorphic to $\R^3$ and
    \begin{align}\label{e:holderstability}
        \|g-g_{\mathbb{E}}\|_{C^{0,\gamma}(\R^3)} < \varepsilon
    \end{align}
    where $g_{\mathbb{E}}$ denotes the flat metric on $\mathbb{R}^3$.
\end{thm}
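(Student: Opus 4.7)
The plan is to apply the BKKS mass formula (\ref{ADM-formula}) to three carefully chosen asymptotically linear harmonic functions $u_1,u_2,u_3:M\to\mathbb R$ whose leading behavior at infinity matches the three coordinate functions on $\mathbb R^3$. Existence and the weighted-H\"older falloff of such $u_i$ are standard on $(b,\tau,\bar m)$ asymptotically flat manifolds. Since $R_g\ge 0$ and $m_{\mathrm{ADM}}(g)\le\delta$, the formula immediately yields
\begin{equation*}
\int_M \frac{|\nabla^2 u_i|^2}{|\nabla u_i|}\, dV_g \le 16\pi\,\delta, \qquad i=1,2,3.
\end{equation*}

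Next I would promote this integral smallness to quantitative pointwise control using conditions (4) and (5) of Definition \ref{the-family-of-metrics}. The $\alpha=\tfrac32$ Neumann isoperimetric bound is equivalent to a uniform local Sobolev inequality, and together with the $L^3$ Ricci bound (equivalent to an $L^3$ Riemann bound by Remark \ref{rmrk:int_Rm_from_int_Ric}) this places us in precisely the setting of Petersen-Wei--style Moser iteration for subsolutions of $\Delta v\ge -hv$ with $h\in L^3$. The exponents $\alpha=\tfrac32$ and $p=3$ are dimension-critical here, and the resulting Moser constants depend only on $\Lambda,\kappa$. Applying this to the Bochner identity
\begin{equation*}
\tfrac12\Delta|\nabla u_i|^2 = |\nabla^2 u_i|^2 + \mathrm{Rc}(\nabla u_i,\nabla u_i)
\end{equation*}
and combining with the asymptotic normalization $|\nabla u_i|\to 1$ at infinity yields uniform $L^\infty$ and local $C^{0,\gamma}$ bounds on each $\nabla u_i$. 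Once an upper bound $|\nabla u_i|\le C$ is in hand, the BKKS estimate upgrades to $\int_M |\nabla^2 u_i|^2\, dV_g\le C\delta$; Sobolev embedding then forces $\langle\nabla u_i,\nabla u_j\rangle-\delta_{ij}$ to be small in $L^p$, and Moser iteration lifts this smallness to $C^{0,\gamma}$.

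With that pointwise closeness of $(\nabla u_i,\nabla u_j)$ to the Euclidean inner product, the map $\Phi=(u_1,u_2,u_3):M\to\mathbb R^3$ has Jacobian uniformly close to the identity, so it is a local diffeomorphism. The controlled asymptotics from condition (2) make $\Phi$ proper, hence a covering map of $\mathbb R^3$, and a degree/asymptotic-direction argument together with the absence of spherical classes in $H_2(M;\mathbb Z)$ shows $\Phi$ is a global diffeomorphism $M\cong\mathbb R^3$. In these $\Phi$-coordinates the entries of the dual metric are exactly $\langle\nabla u_i,\nabla u_j\rangle$, which are $C^{0,\gamma}$-close to $\delta_{ij}$, and inverting yields \eqref{e:holderstability}.

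The principal obstacle is obtaining a \emph{uniform positive lower bound} $|\nabla u_i|\ge c_0>0$ across the whole family $\mathcal M(b,\tau,\bar m,\Lambda,\kappa)$. The BKKS integrand $|\nabla^2 u_i|^2/|\nabla u_i|$ degenerates precisely where $\nabla u_i$ vanishes, so integral smallness alone does not exclude interior critical points. One must bootstrap: use the asymptotic condition to fix $|\nabla u_i|\approx 1$ outside a compact set, then run a barrier/Moser argument based on the $L^3$-Bochner term to propagate this inward under the smallness of $\delta$. Closing this loop, while keeping all constants explicit in terms of $(b,\tau,\bar m,\Lambda,\kappa)$, is the hardest part; everything downstream (the diffeomorphism conclusion and the H\"older estimate) then follows from the already-developed integral regularity theory.
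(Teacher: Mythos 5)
Your overall architecture matches the paper's: apply the Bray--Kazaras--Khuri--Stern identity to three asymptotically linear harmonic functions, show $\langle\nabla u^i,\nabla u^j\rangle$ is uniformly $C^{0,\gamma}$-close to $\delta^{ij}$, and conclude that $U=(u^1,u^2,u^3)$ is a global diffeomorphism in whose coordinates the metric is close to Euclidean. The opening steps (mass formula gives $\int|\nabla^2u|^2/|\nabla u|\le 16\pi\delta$; isoperimetric bound gives uniform Sobolev constants; chain $L^2(\nabla u)\to L^2(\nabla^2u)\to L^6(\nabla u)\to L^\infty(\nabla u)$) and the endgame are all consistent with what the paper does.

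The genuine gap is the middle step: you assert that ``Moser iteration lifts this smallness to $C^{0,\gamma}$'' for $\langle\nabla u^i,\nabla u^j\rangle-\delta^{ij}$, but the De Giorgi--Nash--Moser machinery does not apply here in any direct way. For $i\neq j$ the Bochner-type identity reads $\Delta\langle\nabla u^i,\nabla u^j\rangle=2\langle\nabla^2u^i,\nabla^2u^j\rangle+2\,\mathrm{Rc}(\nabla u^i,\nabla u^j)$, and the Hessian product term has no sign, so there is no subsolution structure; treating it as an inhomogeneous equation $\Delta w=f$ requires $f\in L^p$ with $p>\tfrac{3}{2}$ for Hölder regularity, i.e.\ $\nabla^2u\in L^{3+}$, which is exactly the higher-order integrability your proposal never produces. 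The paper's route to Hölder control is the real technical core that is missing from your outline: uniform $W^{3,2}$ estimates for the harmonic functions (Theorems \ref{SecondOrderEstimate}--\ref{Third-ThirdOrderEstimate}, obtained by two integrations by parts with curvature commutators absorbed via the $L^3$ Ricci bound, an interpolation trick to tame the quadratic Hessian terms, and asymptotic control of all boundary terms on $\partial M_r$), then the interpolation $\|\nabla^2u\|_{L^4}\le\|\nabla^2u\|_{L^2}^{1/4}\|\nabla^2u\|_{L^6}^{3/4}\le Cm^{1/8}$, and finally a Morrey inequality with constants depending only on $(\Lambda,\kappa)$ (Theorem \ref{SobolevToMorrey}), applied to $\nabla\langle\nabla u^i,\nabla u^j\rangle\in L^4$. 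A second, smaller misdiagnosis: you identify the uniform lower bound $|\nabla u_i|\ge c_0$ as the principal obstacle, but in the paper's argument no such bound is needed as an input --- it is an automatic consequence of the $C^{0,\gamma}$ closeness of $\langle\nabla u^i,\nabla u^j\rangle$ to $\delta^{ij}$, and the degenerate denominator in the mass integrand is only ever used through the inequality $4|\nabla\sqrt{|\nabla u|}|^2\le|\nabla^2u|^2/|\nabla u|$ and through $\int|\nabla^2u|^2\le\sup|\nabla u|\int|\nabla^2u|^2/|\nabla u|$, neither of which requires a positive lower bound. Finally, note that comparing $\langle\nabla u^i,\nabla u^j\rangle$ to $\delta^{ij}$ on the compact core also needs the averaging-over-$M_r$ argument combined with the annulus decay of Proposition \ref{p:asymptoticannulusdecay}; your global Sobolev route can substitute for this only once the $L^\infty$ bound on $|\nabla u|$ is in hand, and even then it yields $L^6$ smallness, not the Hölder smallness the theorem requires.
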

\begin{rmrk}
    The H\"{o}lder norm in \eqref{e:holderstability} can be computed using distances measured with respect to either $g$ or $g_\mathbb{E}$ -- since these metrics are $C^0$-close, this choice is immaterial. 
    Also, an interesting consequence of Theorem \ref{Thm-Mass Stability} is the following topological stability: if a manifold $(M,g)\in\mathcal{M}\left(b,\tau,\bar{m},\Lambda,\kappa\right)$ has sufficiently small mass, then $M$ is diffeomorphic to $\mathbb{R}^3$.
\end{rmrk}

Next, we turn to the second main application of our methods. The Geroch conjecture asserts that a Riemannian $3$-torus with non-negative scalar curvature must be flat. This rigidity theorem was first proven by Schoen-Yau \cite{Schoen-Yau-min-surf} and soon after in higher dimensions by Gromov-Lawson \cite{Gromov-Lawson-torus} using different methods. The stability problem associated to the Geroch conjecture was first considered by Gromov \cite{GroD} and made more precise by Sormani \cite{Sormani-scalar}, who observed the necessity of an extra geometric constraint known as a {\em minA condition}. A minA condition is a lower bound on the area of all closed minimal surfaces within the manifold, and is used to eliminate pathological bubbling examples first observed by Basilio-Sormani \cite{Basilio-Sormani}. See the survey paper by Sormani \cite{ScalarSurvey-Sormani} for a recent discussion of this conjecture as well as many other conjectures involving scalar curvature. 

Various special cases of these conjectures have been studied by Allen \cite{Allen-Conformal-Torus}, Allen-Bryden \cite{AllenBryden21}, Allen-Hernandez-Vazquez-Parise-Payne-Wang \cite{AHMPPW1}, Cabrera Pacheco-Ketterer-Perales \cite{PKP19}, Chu-Lee \cite{Chu-Man-Chun22}, and Lee-Naber-Neumeyer \cite{LNN}. In this paper, we establish quantitative stability and H{\"o}lder convergence in the context of integral bounds on the Ricci curvature. We are able to avoid bubbling by requiring a uniform lower bound on the Neumann isoperimetric constant of metric balls. The present work suggests that a lower bound on the Neumann isoperimetric constant
is an analytically effective -- yet still geometric -- replacement to a minA condition when considering the stability of $3$-dimensional rigidity theorems related to scalar curvature. 

The main ingredient we use to establish quantitative stability of the Geroch conjecture is a formula relating the scalar curvature of a closed $3$-manifold to $\mathcal{S}^1$-valued harmonic maps developed by D. Stern. 
\begin{thm}[Theorem 1.1 of \cite{Stern-19}]
\label{t:Stern}
    Let $(M,g)$ be a closed oriented Riemannian $3$-manifold, and suppose $u:M\rightarrow\mathcal{S}^1$ is a non-constant harmonic map. Then
    \begin{equation}\label{Scalar-formula}
        \begin{split} 4\pi\int_{\mathcal{S}^1}\chi\left(\Sigma_{\theta}\right)d\theta&\geq \int_{M}\frac{|\nabla du|^2}{|du|}+R_{g}|du|dV_g.
        \end{split}
    \end{equation}
    where $\chi(\Sigma_\theta)$ is the Euler characteristic of a regular level set $\Sigma_\theta=u^{-1}(\theta)$ and $du$ denotes the $1$-form $u^*d\theta$.
\end{thm}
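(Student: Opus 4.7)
The plan is to derive a clean pointwise identity for $\Delta|du|$ on the regular set $\{|du|>0\}$, integrate it via the divergence theorem on super-level sets $\{|du|>\delta\}$, and then combine with the coarea formula and Gauss-Bonnet on generic slices. The starting point is the Bochner-Weitzenb\"ock identity for the harmonic $1$-form $du$ (justified by local lifts of $u$ to $\R$),
$$\tfrac{1}{2}\Delta|du|^2 = |\nabla du|^2 + \mathrm{Ric}(du,du),$$
which, combined with $\tfrac{1}{2}\Delta|du|^2 = |du|\Delta|du|+|\nabla|du||^2$, gives a pointwise formula for $|du|\Delta|du|$ wherever $|du|>0$.

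On the regular set, the unit conormal $\nu=du/|du|$ to the level set $\Sigma_\theta=u^{-1}(\theta)$ is well defined. The Gauss equation reads $2K_{\Sigma_\theta}=R_g-2\mathrm{Ric}(\nu,\nu)+H^2-|A|^2$, with $A,H$ the second fundamental form and mean curvature. Harmonicity of $u$ translates into the trace relation $|du|H=-\nu(|du|)$, and an orthonormal frame adapted to $\Sigma_\theta$ yields
$$|\nabla du|^2 = |du|^2|A|^2 + 2|\nabla^T|du||^2 + |du|^2H^2,$$
where $\nabla^T$ is the tangential gradient on $\Sigma_\theta$. Assembling the Bochner identity, the Gauss equation, and this frame decomposition, the cross-terms collapse remarkably to the pointwise identity
$$\Delta|du| = \tfrac{1}{2}\!\left(\tfrac{|\nabla du|^2}{|du|} + R_g|du|\right) - K_{\Sigma_\theta}|du|$$
on $\{|du|>0\}$.

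To convert this into a global statement, I would integrate over $\{|du|>\delta\}$ for $\delta>0$ a regular value of $|du|$ (which holds for a.e.\ small $\delta$ by Sard). The divergence theorem gives
$$\int_{\{|du|>\delta\}}\!\!K_{\Sigma_\theta}|du|\, dV_g = \tfrac{1}{2}\!\int_{\{|du|>\delta\}}\!\!\left(\tfrac{|\nabla du|^2}{|du|}+R_g|du|\right)dV_g + \int_{\{|du|=\delta\}}|\nabla|du||\, d\sigma,$$
where the boundary term is manifestly non-negative because $\nabla|du|$ points into $\{|du|>\delta\}$ along its boundary. Passing $\delta\to 0$ by monotone convergence and discarding the non-negative boundary contribution produces the inequality $\int_M K_{\Sigma_\theta}|du|\, dV_g \geq \tfrac{1}{2}\int_M(|\nabla du|^2/|du| + R_g|du|)\, dV_g$.

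Finally, Sard applied to $u$ itself ensures that a.e.\ $\theta\in\mathcal{S}^1$ is a regular value, so $\Sigma_\theta$ is a smooth closed $2$-manifold. The coarea formula converts the left-hand side into $\int_{\mathcal{S}^1}\!\int_{\Sigma_\theta}K_{\Sigma_\theta}\, d\sigma\, d\theta$, and Gauss-Bonnet supplies $\int_{\Sigma_\theta}K_{\Sigma_\theta}\, d\sigma = 2\pi\chi(\Sigma_\theta)$. Multiplying by $2$ yields the claim. The main obstacle is the careful handling of the critical set $\{du=0\}$: verifying that $\chi(\Sigma_\theta)\in L^1(\mathcal{S}^1)$ (a Morse-theoretic input controlling the topological complexity of regular slices), that the singular integrand $|\nabla du|^2/|du|$ is $L^1$ on $M$, and that the boundary integrals pass cleanly through the $\delta\to 0$ limit. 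All of these rest on the fact that the zero set of the harmonic $1$-form $du$ has Hausdorff codimension at least $2$ and a locally well-understood structure coming from unique continuation for elliptic PDE.
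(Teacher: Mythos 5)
This theorem is imported verbatim from Stern's paper and is not proved in the present manuscript, so the only meaningful comparison is with Stern's original argument --- which your proposal reconstructs faithfully: the pointwise identity $\Delta|du|=\tfrac12\bigl(|\nabla du|^2/|du|+R_g|du|\bigr)-K_{\Sigma_\theta}|du|$ obtained from Bochner, the traced Gauss equation, and the adapted-frame decomposition of the Hessian is exactly the engine of that proof, and the super-level-set integration, sign of the boundary term, coarea formula, and Gauss--Bonnet are applied just as there. Your derivation of the identity checks out (in particular $\nabla du(\nu,\nu)=\nu(|du|)=-|du|H$ makes the cross-terms cancel as claimed), and the technical points you defer --- integrability of $|\nabla du|^2/|du|$, $L^1$-control of $\theta\mapsto\chi(\Sigma_\theta)$, and the passage $\delta\to0$ across the codimension-two critical set --- are precisely the points where Stern's paper supplies the remaining work.
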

\begin{rmrk}
    For closed manifolds $M$, recall that there is a bijection between $H^1(M;\mathbb{Z})$ and homotopy classes of maps from $M$ to a circle. For each such homotopy class of maps, there is an harmonic representative, unique up to rotations of the circle. Consequently, the first integral cohomology group of a manifold provides the harmonic maps appearing in Theorem \ref{t:Stern}.
\end{rmrk}

We can now define the class of manifolds which we will consider for our second main result.

\begin{defn}\label{def:toric}
    Given parameters $\Lambda,V,\kappa>0$, we denote by $\mathcal{N}(\Lambda,V,\kappa)$ the collection of closed connected oriented Riemannian $3$-manifolds $(M,g)$ which satisfy the following:
    \begin{enumerate}
        \item $(M,g)$ is $(\Lambda,\frac{3}{2})$-Neumann-isoperimetrically bounded,
        \item $\frac{1}{V}\leq \Vol_g(M)\leq V$,
        \item $\|\mathrm{Rc}_g\|_{L^3}\leq\kappa$,
        \item the second integral homology $H_2(M;\mathbb{Z})$ contains no spherical classes,
        \item there are $3$ cohomology classes $\alpha_1,\alpha_2,\alpha_3\in H^1(M;\mathbb{Z})$ so that the cup product $\alpha_1\cup\alpha_2\cup\alpha_3$ generates $H^3(M;\mathbb{Z})$.
    \end{enumerate}
\end{defn}
\begin{rmrk}
    Let us briefly remark on the homological conditions $(4)$ and $(5)$ imposed on the underlying manifolds in the class $\mathcal{M}(\Lambda,V,\kappa)$. First, a class $\alpha\in H_2(M;\mathbb{Z})$ is said to be {\emph{spherical}} if there is an embedding $\mathcal{S}^2\hookrightarrow M$ such that $\alpha$ is the image of the fundamental class $[\mathcal{S}^2]$. The torus $T^3$ is the prototypical example satisfying $(4)$ and $(5)$, but manifolds of the form $T^3\# N^3$ where $N^3$ has the topology of a closed hyperbolic manifold or a spherical space-form also satisfy these conditions. On the other hand, manifolds with $S^2\times S^1$ summands will fail to satisfy $(4)$.
\end{rmrk}

Next, we apply our methods to investigate the stability of the Geroch conjecture. Below, we adopt the notation $R_g^-=-\min(0,R_g)$ for the negative part of scalar curvature.

\begin{thm}\label{Thm-Torus Stability}
    Let $\Lambda,V,\kappa>0$ and $\gamma\in(0,\tfrac{1}{2})$ be given. For any $\varepsilon>0$, there exists a $\delta=\delta(\Lambda,V,\kappa,\varepsilon)>0$ so that the following holds: 
    If $(M^3,g)\in\mathcal{N}(\Lambda,V,\kappa)$ and $\|R_g^-\|_{L^{1}}\leq\delta$, then $M$ is diffeomorphic to the torus $\mathbb{T}^3$ and 
    \begin{equation}
        \|g-g_{F}\|_{C^{0,\gamma}}<\varepsilon    
    \end{equation}
    for some flat metric $g_{F}$.
\end{thm}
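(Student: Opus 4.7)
The plan is a contradiction argument in the spirit of Petersen, combined with Stern's identity applied to three harmonic maps to $\mathcal{S}^1$ arising from the generators $\alpha_1,\alpha_2,\alpha_3$. For each $(M,g)\in\mathcal{N}(\Lambda,V,\kappa)$, let $u_i\colon M\to\mathcal{S}^1$ be the harmonic representative of $\alpha_i$, unique up to rotation, and form $u=(u_1,u_2,u_3)\colon M\to\mathbb{T}^3$. The cup-product hypothesis (5) forces $\deg u=\pm 1$. Because $H_2(M;\mathbb{Z})$ has no spherical classes, every regular level set $u_i^{-1}(\theta)$ is a closed orientable surface of genus at least one, so $\chi(u_i^{-1}(\theta))\le 0$. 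Stern's identity (Theorem \ref{t:Stern}) then forces the right-hand side to be nonpositive, yielding
\begin{equation*}
    \int_M\frac{|\nabla du_i|^2}{|du_i|}\,dV_g \;\le\; \|R_g^-\|_{L^1}\cdot\|du_i\|_{L^\infty}.
\end{equation*}

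Next I establish uniform $L^\infty$ bounds on $|du_i|$ after normalization. The Bochner formula for a harmonic $1$-form gives $-\Delta|du_i|^2 \le 2|\mathrm{Rc}_g|\,|du_i|^2$, and the Neumann $\tfrac{3}{2}$-isoperimetric bound provides a global Sobolev inequality. Since $p=3$ is supercritical for the De Giorgi--Moser iteration in dimension three, the $L^3$ Ricci bound produces $\|du_i\|_{L^\infty}\le C(\Lambda,V,\kappa)\,\|du_i\|_{L^2}$. Using the homogeneity of Stern's formula, I normalize $\|du_i\|_{L^2}=1$; this feeds back into the previous step to yield $\|\nabla du_i\|_{L^2}^2\le C\,\|R_g^-\|_{L^1}$, so the three harmonic forms are nearly parallel as $\|R_g^-\|_{L^1}\to 0$.

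Now assume for contradiction that the theorem fails along a sequence $(M_k,g_k)\in\mathcal{N}(\Lambda,V,\kappa)$ with $\|R^-_{g_k}\|_{L^1}\to 0$. The Neumann isoperimetric bound on balls, the $L^3$ Ricci bound, and the volume bounds together place the sequence in the Petersen--Wei/Dai--Wang--Zhang compactness class, so after a subsequence $g_k\to g_\infty$ in $C^{1,\alpha'}$ harmonic coordinates on a smooth limit manifold $M_\infty$. Passing the normalized maps $u_{k,i}$ to the limit using their uniform $L^\infty$ and $H^1$ bounds together with $\|\nabla du_{k,i}\|_{L^2}\to 0$ yields a harmonic map $u_\infty\colon M_\infty\to\mathbb{T}^3$ whose differentials $du_{\infty,i}$ are parallel. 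Because the degree-one property survives the limit, $du_{\infty,1}\wedge du_{\infty,2}\wedge du_{\infty,3}$ is a nonzero constant multiple of the volume form, so the $du_{\infty,i}$ form a parallel global coframe. This forces $g_\infty$ to be flat and $M_\infty\cong\mathbb{T}^3$. Because $C^{1,\alpha'}$ convergence implies $C^{0,\gamma}$ convergence for any $\gamma<\alpha'$, the sequence $g_k$ becomes eventually $C^{0,\gamma}$-close to a flat torus metric, contradicting the assumption and completing the argument.

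The main obstacle is the nondegeneracy of the limit. One must show (i) the isoperimetric, $L^3$ Ricci, and volume bounds rule out any collapse so that $M_\infty$ is a genuine smooth $3$-manifold rather than a lower-dimensional or singular limit; (ii) the normalization of the harmonic maps persists through the limit, i.e., $\|du_{k,i}\|_{L^2}$ stays uniformly bounded away from both $0$ and $\infty$, which requires carefully tracking how the target-circle rescaling interacts with the integer cup-product condition; and (iii) the wedge of the three limiting parallel $1$-forms does not degenerate, ensuring pointwise linear independence. These delicate steps, all handled by compactness rather than direct estimation, are the source of the implicit (rather than explicit) quantitative dependence noted in the introduction.
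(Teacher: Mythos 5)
Your skeleton (Stern's identity for the three $\mathcal{S}^1$-valued harmonic maps, the isoperimetric bound feeding a Sobolev inequality, and non-degeneracy of $du^1\wedge du^2\wedge du^3$ from the cup-product condition) matches the paper's, and your Moser-iteration route to $\|du_i\|_{L^\infty}\le C\|du_i\|_{L^2}$ is a legitimate alternative to the paper's bootstrap $L^2\to L^3\to\|\nabla du\|_{L^2}\to L^6\to$ Morrey. But there is a genuine gap exactly where you flag "obstacle (ii)," and it is not a routine delicate step -- it is one of the essential inputs. You normalize $\|du_i\|_{L^2}=1$, which destroys the integrality of the periods; the only source of non-degeneracy for the limiting coframe is $\int_M du^1\wedge du^2\wedge du^3=1$, which holds for the \emph{integral} harmonic representatives and becomes $(\lambda_1\lambda_2\lambda_3)^{-1}$ after rescaling by $\lambda_i=\|du^i\|_{L^2}$. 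If some $\lambda_{k,i}\to\infty$ along your contradicting sequence, the normalized wedge product tends to $0$ and nothing prevents the limit $1$-forms from becoming linearly dependent, so the limit need not be a torus and the contradiction does not materialize. You therefore need an a priori uniform upper bound on $\|du^i\|_{L^2}$ for the integral representatives, and nothing in your argument supplies it: Stern's identity, Bochner, and Moser iteration all only control $du^i$ \emph{relative to} $\|du^i\|_{L^2}$. The paper closes this with a separate idea (Corollary \ref{cor:uniformly-controlled-cohomology}): fix metric-independent closed forms $\omega_i$ with $\int_M\omega_1\wedge\omega_2\wedge\omega_3=1$, use the $C^{0,\gamma}$-precompactness of $\mathcal{N}(\Lambda,V,\kappa)$ (Proposition \ref{prop:Petersen-harmonic-norm-bound}) to bound $\|\omega_i\|_{L^2}$ uniformly, and then invoke the fact that the harmonic representative minimizes the $L^2$-norm in its cohomology class. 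Some such argument must be added before your limit can be shown non-degenerate.

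Two smaller points. First, with only $\|\mathrm{Rc}\|_{L^3}\le\kappa$ (i.e.\ $p=n=3$) the harmonic-radius theory gives $W^{2,3}$, hence $C^{0,\gamma}$ for every $\gamma<1$, precompactness -- not the $C^{1,\alpha'}$ convergence you assert; this weaker convergence still suffices for the conclusion (the paper only ever uses $C^{0,\gamma}$), but your statement that the $du_{\infty,i}$ are "parallel" for the limit connection needs to be interpreted at $W^{2,3}$ regularity, and it is cleaner to argue, as the paper does, that $g(\nabla u^i,\nabla u^j)$ is $C^{0,\gamma}$-close to its average directly via the Morrey inequality and the interpolation bound $\|\nabla du^i\|_{L^q}\lesssim\|R_g^-\|_{L^1}^{\theta/2}$, which also replaces your final compactness step by an explicit construction of the comparison flat metric $g_F=\Psi^*\delta_F$ with $\delta_F^{-1}=a^{ij}\partial_{\theta^i}\otimes\partial_{\theta^j}$, $a^{ij}=\dashint_M g(\nabla u^i,\nabla u^j)\,dV_g$. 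Second, your claim that the no-spherical-classes hypothesis forces every regular level set to have $\chi\le0$ glosses over null-homologous sphere components; the paper is equally terse here, but be aware the step requires an argument.
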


The following sequential stability result is a straight-forward consequence of Theorem \ref{Thm-Torus Stability}.

\begin{cor}\label{Cor-SequentialTorusStability}
     Let $\Lambda,V,\kappa>0$ be given and suppose $\{(M^3_i,g_i)\}_{i=1}^\infty$ is a sequence in $\mathcal{N}(\Lambda,V,\kappa)$. If $\displaystyle\lim_{i\to\infty}\|R_{g_i}^{-}\|_{L^{1}}=0$, then $(M_i,g_i)$ subsequentially converge to a flat torus in the $C^{0,\gamma}$ topology for some $\gamma\in(0,1)$.
\end{cor}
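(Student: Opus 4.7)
The plan is to apply Theorem~\ref{Thm-Torus Stability} with shrinking tolerance $\varepsilon_k=1/k$, extract a diagonal subsequence, and then use Mahler's lattice compactness on the auxiliary flat metrics produced by the theorem. Fix $\gamma\in(0,\tfrac12)$, let $\delta_k:=\delta(\Lambda,V,\kappa,1/k)$ be the constant from Theorem~\ref{Thm-Torus Stability}, and use $\|R_{g_i}^-\|_{L^1}\to 0$ to choose strictly increasing indices $i_k$ with $\|R_{g_{i_k}}^-\|_{L^1}\leq\delta_k$. The theorem then furnishes a diffeomorphism $\Phi_k:\mathbb{T}^3\to M_{i_k}$ and a flat metric $h_k$ on $\mathbb{T}^3$ satisfying
$$\|\Phi_k^*g_{i_k}-h_k\|_{C^{0,\gamma}(\mathbb{T}^3)}<\tfrac{1}{k}.$$

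Next, I would argue that, modulo the action of $\mathrm{Diff}(\mathbb{T}^3)$, the family $\{h_k\}$ is precompact. The volume bound $\tfrac{1}{V}\leq\Vol_{g_{i_k}}(M_{i_k})\leq V$ combined with the $C^0$ closeness yields $\Vol_{h_k}(\mathbb{T}^3)\in[\tfrac{1}{2V},2V]$ for $k$ large. The systole of $h_k$ is also bounded below uniformly: the $C^0$ bound transfers the Neumann isoperimetric condition from $\Phi_k^*g_{i_k}$ to $h_k$ up to a multiplicative factor close to $1$, and a flat torus with vanishingly small systole would admit small metric balls which, split orthogonally to the short circle direction, have Neumann isoperimetric ratios tending to zero --- contradicting the lower bound inherited from $\Lambda$.

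With volume bounded on both sides and systole bounded below, Mahler's compactness theorem applied to the lattices associated with $h_k$ produces a subsequence together with self-diffeomorphisms $\psi_k\in\mathrm{Diff}(\mathbb{T}^3)$ (arising from $\mathrm{GL}_3(\mathbb{Z})$ basis changes on the universal cover) so that $\psi_k^*h_k\to h_\infty$ smoothly for some flat metric $h_\infty$ on $\mathbb{T}^3$. Replacing $\Phi_k$ with $\Phi_k\circ\psi_k$ preserves the $C^{0,\gamma}$ closeness, so $\Phi_k^*g_{i_k}\to h_\infty$ in $C^{0,\gamma}$, which is the desired convergence to a flat torus. The main obstacle is the systole lower bound in the middle step --- quantifying how a small systole on a flat torus forces small metric balls with poor Neumann isoperimetric ratios; the remaining diagonal selection and Mahler compactness arguments are essentially standard.
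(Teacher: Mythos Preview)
Your approach is essentially the paper's: apply Theorem~\ref{Thm-Torus Stability} diagonally and then invoke compactness of the resulting family of flat tori. For that last step the paper offers two routes --- either quote the general $C^{0,\gamma}$ precompactness of $\mathcal{N}(\Lambda,V,\kappa)$ from Proposition~\ref{prop:Petersen-harmonic-norm-bound}, or carry out an explicit lattice computation (bounding the lengths and pairwise angles of the generating vectors by testing the isoperimetric inequality on cubical slabs of the fundamental domain), which amounts to proving the Mahler-type compactness by hand and in particular fills in precisely the systole lower bound you flagged as the main obstacle.
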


To conclude this section, we give a brief outline of the paper. We begin in Section \ref{sec-Background} by introducing the fundamental definitions and theorems which describe volume controls for manifolds with integral bounds on Ricci curvature. In Section \ref{sec-Sobolev}, we define isoperimetric and Sobolev constants for manifolds and prove various relationships between them. At the end of this section we show that uniform control on the volume growth of balls and a $(1,p)$ Poincar{\'e} inequality are enough to guarantee a uniformly controlled constant in Morrey's inequality.

Section \ref{sec-Asymptotic} is devoted to the analysis of asymptotically linear harmonic functions on asymptotically flat manifolds. We review the asymptotic features of such functions established in \cite{KKL-2021} and develop related decay results which are needed in Sections \ref{sec-IntegralEstimates} and \ref{sec-MassEstimates}. Next, Section \ref{sec-IntegralEstimates} develops $L^2$ control on higher order derivatives of harmonic functions. These estimates are derived by integrating by parts and commuting covariant derivatives. This is an essential step where the integral Ricci bounds and asymptotic estimates show up as necessary assumptions to obtain uniform $W^{3,2}$ regularity of harmonic functions. Finally, in Section \ref{sec-MassEstimates}, we combine the mass formula, Sobolev inequalities, the asymptotic estimates, and the $W^{3,2}$ regularity of harmonic functions in order to prove Theorem \ref{Thm-Mass Stability}.

This accomplished, we turn to the proof of Theorem \ref{Thm-Torus Stability} in section \ref{sec-Torus}. We follow a fairly similar argument as in the proof of Theorem \ref{Thm-Mass Stability}. The main difference is that we cannot leverage the asymptotic control which comes from the asymptotic region of asymptotically flat manifolds. Instead, we are able to establish control on the $L^2$ norm of harmonic $1-$forms on tori and related manifolds which is used to produce the desired H{\"o}lder stability.  


\section{Background}\label{sec-Background}

In this section we review some results and notions central to our strategy. We begin by recalling the following isoperimetric constant.
\begin{defn}[Definition 9.1 in \cite{li_2012}]
    Given an $n$-dimensional Riemannian manifold $(M,g)$ and a number $1\leq\alpha\leq\tfrac{n}{n-1}$, the {\emph{Dirichlet $\alpha-$isoperimetric constant of $(M,g)$}} is denoted by $ID_{\alpha}(M,g)$ and defined as
    \begin{equation}
        ID_{\alpha}(M^{n},g)=\inf\left\lbrace\frac{|\partial\Omega|}{|\Omega|^{\frac{1}{\alpha}}}:\Omega\subset M,\text{ }\partial\Omega\cap\partial M=\emptyset\right\rbrace.
    \end{equation}
\end{defn}

\noindent Happily, a lower bound on $ID_{\alpha}(M,g)$ for $\alpha>1$ gives a lower bound on the volume growth of metric balls.
\begin{lem}\label{DirichletIsoToLowerVolumeGrowth}
Let $(M,g)$ be an n-dimensional Riemannian manifold with $ID_{\alpha}(M,g)>0$ for some $\alpha\in(1,\tfrac{n}{n-1}]$. Then, we have
\begin{align}
\frac{|\partial B_r(x)|}{|B_r(x)|^{\frac{1}{\alpha}}} \ge ID_{\alpha}(M,g),
\end{align}
and, for any $r\leq \Diam(M,g)$,
\begin{align}
|B_r(x)| \ge ID_{\alpha}(M,g)^{\frac{\alpha}{\alpha-1}} r^{\frac{\alpha}{\alpha-1}}.
\end{align}
In particular, if $\alpha = \tfrac{n}{n-1}$ and $r\leq \Diam(M,g)$, then
\begin{align}
|B_r(x)| \ge ID_{\frac{n}{n-1}}(M,g)^n r^n.
\end{align}
\end{lem}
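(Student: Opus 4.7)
The plan is to apply the definition of $ID_\alpha$ directly to metric balls, and then convert the resulting isoperimetric inequality into an ODE for the volume function of $B_r(x)$.

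First I would observe that the first inequality is immediate from the definition of $ID_\alpha$ with the test set $\Omega = B_r(x)$. For $r \leq \Diam(M,g)$, this ball is a proper subset of $M$, and by Sard's theorem applied to the $1$-Lipschitz distance function $d_x(y) := d(x,y)$, the sphere $\partial B_r(x)$ is a regular hypersurface for a.e.\ $r$; more generally it has a well-defined $(n-1)$-Hausdorff measure as the boundary of a set of finite perimeter, so $|\partial B_r(x)|/|B_r(x)|^{1/\alpha} \geq ID_\alpha(M,g)$.

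Next, set $V(r) := |B_r(x)|$. Since $|\nabla d_x| = 1$ a.e., the coarea formula gives
\begin{equation*}
    V(r) = \int_0^r |\partial B_s(x)| \, ds,
\end{equation*}
so $V$ is absolutely continuous with $V'(r) = |\partial B_r(x)|$ for a.e.\ $r$. Combining this with the first step yields the separable differential inequality $V'(r) \geq ID_\alpha(M,g)\, V(r)^{1/\alpha}$ for a.e.\ $r \in (0,\Diam(M,g))$.

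To finish, I would rewrite this as
\begin{equation*}
    \frac{d}{dr}\left[V(r)^{(\alpha-1)/\alpha}\right] = \frac{\alpha-1}{\alpha} V(r)^{-1/\alpha} V'(r) \geq \frac{\alpha-1}{\alpha} ID_\alpha(M,g),
\end{equation*}
integrate from $0$ to $r$ using $V(0)=0$, and raise to the power $\alpha/(\alpha-1)$ to recover the claimed power-law lower bound (up to the universal factor $\left(\tfrac{\alpha-1}{\alpha}\right)^{\alpha/(\alpha-1)}$, which I presume is absorbed into the stated form). The specialization $\alpha = n/(n-1)$ then follows by noting $\alpha/(\alpha-1) = n$. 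I do not anticipate any serious analytic obstacle; the only delicate point is the coarea identity for $V$, which is standard for distance functions but requires a bit of care near the cut locus. The hypothesis $r \leq \Diam(M,g)$ is essential — beyond the diameter $B_r(x) = M$ has empty boundary and the isoperimetric inequality degenerates.
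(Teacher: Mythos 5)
Your argument is exactly the paper's: the authors also test the Dirichlet isoperimetric ratio on $B_r(x)$, write $\tfrac{d}{dr}|B_r(x)| = |\partial B_r(x)| \ge ID_{\alpha}\,|B_r(x)|^{1/\alpha}$ for a.e.\ $r$, and integrate the resulting ODE inequality. Your version is in fact slightly more careful: the integration genuinely produces the extra factor $\left(\tfrac{\alpha-1}{\alpha}\right)^{\alpha/(\alpha-1)}$ (equal to $n^{-n}$ when $\alpha=\tfrac{n}{n-1}$) that you flag, which the paper's stated conclusion silently drops; since the lemma is only ever used to obtain a lower volume bound of the form $C(\Lambda)r^{n}$, this harmless constant does not affect any application.
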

\begin{proof}
The following proof is taken from \cite{Heinonen-2001} chapter 3, page 25.
By taking the derivative of the volume of geodesic balls, we find for almost every $r > 0$
\begin{align}
\frac{d}{dr} |B_r(x)| = |\partial B_r(x)| \ge ID_{\alpha} |B_r(x)|^{\frac{1}{\alpha}},
\end{align}
and hence by integrating the ODE inequality we find
\begin{align}
|B_r(x)| \ge ID_{\alpha}^{\frac{\alpha}{\alpha-1}} r^{\frac{\alpha}{\alpha-1}}.
\end{align}
\end{proof}

Whereas lower bounds on $ID_{\alpha}(M)$ give lower bounds on the volume growth of metric balls, lower bounds on Ricci curvature give upper bounds on the volume growth of metric balls. This is the content of the following two important results due to Petersen-Wei.

\begin{lem}[Lemma 2.3 in \cite{Petersen-Wei-1997}]\label{lem:int_ric_growth}
    Given an $n$-dimensional Riemannian manifold $(M,g)$, let $h(x)$ denote the smallest eigenvalue of the Ricci endomorphism at $x\in M$, and $k(\lambda, p)$ be given by
    \begin{equation}
        k(\lambda,p)=\int_{M}\textrm{max}\left\lbrace 0,-h(x)+(n-1)\lambda\right\rbrace^{p}dV_g.
    \end{equation}
    If $\lambda\leq 0$, $r<R$, and $p>\tfrac{n}{2}$, then there is a constant $C(n,p,\lambda,R)$, such that
    \begin{equation}
        \left(\frac{|B_r(x)|}{v(n,\lambda,R)}\right)^{\frac{1}{2p}}-\left(\frac{|B_r(x)|}{v(n,\lambda,r)}\right)^{\frac{1}{2p}}\leq C\left(n,p,\lambda,R\right)\left(k(\lambda,p)\right)^{\frac{1}{2p}},
    \end{equation}
    where $v(n,\lambda,t)$ is the volume of a ball of radius $t$ in the simply connected $n$-dimensional space-form with curvature $\lambda$. 
\end{lem}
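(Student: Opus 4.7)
The plan is to adapt the classical Bishop--Gromov volume comparison argument by quantifying the failure of its monotonicity in terms of the excess Ricci curvature $\rho(x) := \max\{0,\,-h(x)+(n-1)\lambda\}$, whose $L^{p}$ norm is $k(\lambda,p)^{1/p}$. First I would fix $x\in M$ and work in geodesic polar coordinates about $x$, writing $|B_r(x)| = \int_{S_xM}\int_0^{\min(r,c(\theta))} J(t,\theta)\,dt\,d\theta$, where $J$ is the volume Jacobian along the geodesic $\exp_x(t\theta)$ and $c(\theta)$ is its cut distance; extend $J$ by zero past the cut locus. Let $m(t,\theta) := \partial_t \log J$ denote the radial mean curvature, and let $J_\lambda(t)$, $m_\lambda(t) := \partial_t \log J_\lambda$ be the corresponding quantities for the simply-connected space form of curvature $\lambda$.

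Second, the Riccati inequality $m' + m^{2}/(n-1) \leq -\mathrm{Ric}(\partial_t,\partial_t)$ combined with the equality $m_\lambda' + m_\lambda^{2}/(n-1) = -(n-1)\lambda$ yields, upon taking positive parts $\psi := (m - m_\lambda)_{+}$,
\begin{equation*}
    \psi' + \tfrac{2m_\lambda}{n-1}\,\psi + \tfrac{\psi^{2}}{n-1} \leq \rho(t,\theta)
\end{equation*}
in the distributional sense (the jumps of $\psi$ at the cut locus go the correct way). The hypothesis $\lambda \leq 0$ forces $m_\lambda \geq 0$, which is the favorable sign in the integrating-factor step. Multiplying through by $\psi^{2p-1}J$ and integrating in $t$ produces a pointwise-in-$\theta$ $L^{2p}$ estimate for $\psi$ along each geodesic; integrating in $\theta\in S_xM$ and using H{\"o}lder's inequality with exponent $p > n/2$ (the threshold ensuring integrability of the space-form weights) then yields
\begin{equation*}
    \int_0^{R}\int_{S_xM} \psi(t,\theta)^{2p}\,J(t,\theta)\,d\theta\,dt \leq C(n,p,\lambda,R)^{2p}\,k(\lambda,p).
\end{equation*}

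Third, I would convert this $L^{2p}$ bound into the desired volume-ratio estimate. A direct computation using $(J/J_\lambda)' = (m-m_\lambda)\,J/J_\lambda$ shows that when one forms $F(r) := \bigl(|B_r(x)|/v(n,\lambda,r)\bigr)^{1/(2p)}$, its derivative is controlled by an $(S_xM)$-average of $\psi$ weighted by $J_\lambda$. Applying H{\"o}lder in both $t$ and $\theta$ to integrate $F'$ from $r$ to $R$ and invoking the estimate displayed above then gives
\begin{equation*}
    F(R) - F(r) \leq C(n,p,\lambda,R)\,k(\lambda,p)^{1/(2p)},
\end{equation*}
as claimed (reading the statement with the natural correction $|B_r(x)| \mapsto |B_R(x)|$ in the first term).

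The main obstacle will be the Riccati bookkeeping: one must verify that $\psi$ only jumps downward across the cut locus so that the distributional inequality above is legitimate, choose an integrating factor that simultaneously absorbs the quadratic $\psi^{2}/(n-1)$ term and exploits the helpful sign of $m_\lambda$, and calibrate each application of H{\"o}lder so the final constant is uniform in $r$. It is precisely this calibration that forces $p > n/2$: a smaller exponent would fail to control the radial weight coming from the space-form Jacobian $J_\lambda$ near $t=0$, blowing up the constant $C(n,p,\lambda,R)$.
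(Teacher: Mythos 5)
The paper does not prove this lemma; it is quoted verbatim (modulo a typo) from Petersen--Wei, so there is no internal proof to compare against. Your plan is a faithful reconstruction of the Petersen--Wei argument: the Riccati comparison for $\psi=(m-m_\lambda)_+$, the $L^{2p}$ bound $\|\psi\|_{2p}\lesssim \|\rho\|_p^{1/2}$ obtained by integrating a power of $\psi$ against the volume element, and the separation-of-variables step for $F(r)=(|B_r(x)|/v(n,\lambda,r))^{1/(2p)}$. You are also right that the statement as printed contains a typo and the first term must read $|B_R(x)|/v(n,\lambda,R)$ (as written the left side is nonpositive and the inequality is vacuous; the corrected form is what Corollary \ref{volume-growth-above} actually uses, via $r\to 0$). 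One small calibration point: the correct multiplier in the second step is $\psi^{2p-2}$ (equivalently, one differentiates $\psi^{2p-1}\mathcal{A}$), not $\psi^{2p-1}$; with that choice the coefficient of the $m_\lambda\psi^{2p-1}\mathcal{A}$ term becomes $1-\tfrac{2(2p-1)}{n-1}<0$, so the $\lambda\le 0$ sign of $m_\lambda$ lets you discard it, the $\psi^{2p}$ terms combine to $-\tfrac{2p-n}{n-1}\psi^{2p}\mathcal{A}$ (this is exactly where $p>n/2$ enters), and H{\"o}lder with exponents $p$ and $p/(p-1)$ on $\int\rho\,\psi^{2p-2}\mathcal{A}$ closes the estimate. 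With that adjustment your outline goes through.
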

As noted in \cite{Petersen-Wei-1997}, the following is an immediate consequence.
\begin{cor}\label{volume-growth-above}
    With the same notation and assumptions of Lemma \ref{lem:int_ric_growth}, for all $0\leq r\leq R$, we have
    \begin{equation}
        |B_r(x)|\leq\left(1+C(n,p,\lambda,R)k(\lambda,p)^{\frac{1}{2p}}\right)^{2p}v\left(n,\lambda,r\right).
    \end{equation}
\end{cor}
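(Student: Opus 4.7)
The plan is to derive the corollary by taking a vanishing-radius limit in the relative volume comparison provided by Lemma \ref{lem:int_ric_growth}. Fix $r \in (0,R]$. I would apply the lemma with outer radius $r$ and an auxiliary inner radius $\rho \in (0, r)$, obtaining an inequality of Bishop--Gromov type,
\begin{equation*}
\left(\frac{|B_r(x)|}{v(n,\lambda,r)}\right)^{1/2p} - \left(\frac{|B_\rho(x)|}{v(n,\lambda,\rho)}\right)^{1/2p} \leq C(n,p,\lambda,r)\, k(\lambda,p)^{1/2p}.
\end{equation*}
The next step is to let $\rho \to 0^+$. Since every Riemannian manifold is infinitesimally Euclidean at any point, the leading-order volume of a small geodesic ball coincides with that of the model-space ball of the same radius in curvature $\lambda$; equivalently,
\begin{equation*}
\lim_{\rho \to 0^+} \frac{|B_\rho(x)|}{v(n,\lambda,\rho)} = 1.
\end{equation*}
Passing to the limit, rearranging, and raising both sides to the $2p$-th power gives
\begin{equation*}
|B_r(x)| \leq \bigl(1 + C(n,p,\lambda,r)\, k(\lambda,p)^{1/2p}\bigr)^{2p}\, v(n,\lambda,r).
\end{equation*}

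To conclude, I would invoke the monotonicity of $C(n,p,\lambda,\cdot)$ in its radius argument -- a feature built into the Petersen--Wei construction, where this constant arises as a Gr\"onwall-type bound on an interval containing $[0,R]$ -- to replace $C(n,p,\lambda,r)$ by $C(n,p,\lambda,R)$ uniformly in $r \le R$. This produces the stated inequality
\begin{equation*}
|B_r(x)| \leq \bigl(1 + C(n,p,\lambda,R)\, k(\lambda,p)^{1/2p}\bigr)^{2p}\, v(n,\lambda,r)
\end{equation*}
for every $r \in [0,R]$.

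The two genuinely non-cosmetic steps are (i) the small-ball limit $|B_\rho(x)|/v(n,\lambda,\rho) \to 1$, which follows from the standard exponential-map expansion at $x$, and (ii) the monotonicity of $C(n,p,\lambda,\cdot)$, which is visible from its explicit form in the proof of Lemma \ref{lem:int_ric_growth}. Neither introduces a real obstacle, which is why the Petersen--Wei authors, and the present paper, list the statement as an ``immediate consequence'' of the relative volume comparison.
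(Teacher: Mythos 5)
Your proof is correct and is precisely the standard derivation that the paper (which offers no proof, citing the corollary as an ``immediate consequence'' of Petersen--Wei) intends: apply the relative volume comparison with inner radius $\rho\to 0^+$, use $|B_\rho(x)|/v(n,\lambda,\rho)\to 1$, and use monotonicity of the constant in the outer radius. Note that you have also implicitly (and correctly) repaired a typo in the paper's transcription of Lemma \ref{lem:int_ric_growth}, whose first term should read $|B_R(x)|/v(n,\lambda,R)$ rather than $|B_r(x)|/v(n,\lambda,R)$ -- as stated literally the lemma's left-hand side is nonpositive and yields nothing.
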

\section{Sobolev, Poincar{\'e}, and Morrey inequalities}\label{sec-Sobolev}

In this section we remind the reader of the definitions of various Sobolev and isoperimetric constants, review well known relationships between them, and prove some new relationships which are needed in later sections. 

\begin{ntn} 
Below and throughout, if $f$ is a function defined on a domain $B$ in a Riemannian manifold $(M,g)$, we write $f_B$ for the average value $\dashint_B fdV_g:=\tfrac{1}{|B|}\int_BfdV_g$.
Secondly, without further mention, we will use $C(a_1,\dots, a_N)$ to denote a constant which depends only on parameters $a_1,\dots,a_N$. If we require the use of multiple such constants in the course of a given proof, we will use subscripts $C_k(a_1,\dots,a_N)$, $k=1,2,\dots$, to differentiate them.
\end{ntn}

Since the Neumann and Dirichlet isoperimetric constants are crucial to our forthcoming analysis, we begin with a result relating them to each other. This lemma follows from a remark in \cite{Dai_Wei_Zhang-2018}.
\begin{lem}\label{NeumanToDirichletIsoperimetricEst}
Let $(M,g)$ be an $n$-dimensional Riemannian manifold. If domains $\Omega_1  \subset \Omega_2\subset M$ satisfy $\partial \Omega_1 \cap \partial\Omega_2 = \emptyset$ and $|\Omega_1|\le \frac{1}{2} |\Omega_2|$, then
\begin{align}
    IN_{\alpha}(\Omega_2,g) \le ID_{\alpha}(\Omega_1,g).
\end{align}
Furthermore, if $\Omega_1' \subset \Omega_2'$ so that $\partial \Omega_1' \cap \partial \Omega_2' = \emptyset$ we find
\begin{align}
    ID_{\alpha}(\Omega_2',g) \le  ID_{\alpha}(\Omega_1',g).
\end{align}
\end{lem}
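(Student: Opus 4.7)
The plan is to unwind the definitions of the Neumann and Dirichlet isoperimetric constants and use any competitor for the Dirichlet infimum on a smaller domain to build a competitor for the larger domain's Neumann (or Dirichlet) infimum.

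For the second inequality, I would start with any subdomain $\Omega \subset \Omega_1'$ satisfying $\partial \Omega \cap \partial \Omega_1' = \emptyset$. The hypothesis $\partial \Omega_1' \cap \partial \Omega_2' = \emptyset$ together with $\Omega_1'\subset \Omega_2'$ implies $\overline{\Omega} \subset \Omega_1'$ sits in the interior of $\Omega_2'$, so $\Omega$ is automatically an admissible competitor for $ID_\alpha(\Omega_2', g)$. Since the infimum for $\Omega_2'$ is taken over a larger collection of sets than for $\Omega_1'$, we immediately get $ID_\alpha(\Omega_2', g) \le ID_\alpha(\Omega_1', g)$.

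For the first inequality, the idea is to promote an admissible Dirichlet competitor $\Omega \subset \Omega_1$ (with $\partial\Omega \cap \partial\Omega_1 = \emptyset$) to an admissible Neumann partition of $\Omega_2$. I would set $A = \Omega$, $S = \partial \Omega$, and $B = \Omega_2 \setminus \overline{\Omega}$. Because $\overline{\Omega}$ lies in the interior of $\Omega_1$, hence in the interior of $\Omega_2$, the separating hypersurface $S$ is precisely the common interior boundary of $A$ and $B$ inside $\Omega_2$, so the triple $(A,S,B)$ is admissible in the definition of $IN_\alpha(\Omega_2,g)$. The volume assumption $|\Omega_1| \le \tfrac{1}{2}|\Omega_2|$ then yields
\begin{equation*}
    |A| = |\Omega| \le |\Omega_1| \le \tfrac{1}{2}|\Omega_2| \le |\Omega_2| - |\Omega| = |B|,
\end{equation*}
so that $\min(|A|,|B|) = |\Omega|$. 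Applying the definition of $IN_\alpha(\Omega_2, g)$ gives
\begin{equation*}
    IN_\alpha(\Omega_2, g) \le \frac{|\partial \Omega|}{|\Omega|^{1/\alpha}},
\end{equation*}
and taking the infimum over all admissible $\Omega$ produces $IN_\alpha(\Omega_2,g) \le ID_\alpha(\Omega_1,g)$.

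There is no real analytic obstacle here; the entire argument is a careful definitional check. The only delicate point is verifying that the partition $(A,S,B)$ of $\Omega_2$ truly satisfies the Neumann admissibility condition $\partial A = S = \partial B$ in the interior of $\Omega_2$, which is exactly where the hypothesis $\partial \Omega \cap \partial \Omega_1 = \emptyset$ is used to separate $S$ from $\partial \Omega_2$.
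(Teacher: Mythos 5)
Your proposal is correct and follows essentially the same route as the paper: both arguments promote an arbitrary Dirichlet competitor $U\subset\Omega_1$ to a Neumann partition of $\Omega_2$, use $|\Omega_1|\le\tfrac12|\Omega_2|$ to identify $\min(|U|,|\Omega_2\setminus U|)=|U|$, and then take the infimum; the second inequality is in both cases the observation that the Dirichlet infimum for $\Omega_2'$ ranges over a larger class of competitors. Your write-up is, if anything, slightly more careful than the paper's in verifying the admissibility of the partition $(A,S,B)$.
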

\begin{proof}
    Consider $U \subset \Omega_1$ so that $\partial U \cap \partial \Omega_1 = \emptyset$. Then we see that $U$ is a valid competitor for $ID_{\alpha}(\Omega_1,g)$ where 
    \begin{align}
     |U| \le |\Omega_1| \le \frac{1}{2} |\Omega_2| ,  
    \end{align} and 
    \begin{align}
       |\Omega_2 \setminus U| =|\Omega_2|-|U|\ge \frac{1}{2}|\Omega_2| .
    \end{align}
    Now we notice that $U$ is a valid competitor for $IN_{\alpha}(\Omega_2,g)$ and hence
    \begin{align}
     IN_{\alpha}(\Omega_2,g) \le \frac{|\partial U|}{\min\{|U|,|\Omega_2\setminus U|\}^{\frac{1}{\alpha}}}   \le \frac{|\partial U|}{|U|^{\frac{1}{\alpha}}}.\label{CompetitorRelationship}
    \end{align}
    Since this is true for any $U$ so that $U \subset \Omega_1$ and $\partial U \cap \partial \Omega_1 = \emptyset$ we can take the infimum over the right hand side of \eqref{CompetitorRelationship} to find the first result.
    
    The second result follows from the fact that any competitor for $ID_{\alpha}(\Omega_1',g)$ is a competitor for $ID_{\alpha}(\Omega_2',g)$.
\end{proof}

\subsection{Dirichlet and Neumann Sobolev Constants}

We start by defining the Dirichlet and Neumann Sobolev constants.



\begin{defn}[Definition 9.3 in \cite{li_2012}]
    Let us define $SD_{\alpha}(M^{n},g)$ for $1\leq\alpha\leq\frac{n}{n-1}$ as follows:
    \begin{equation}
        SD_{\alpha}(M^{n},g)=\inf\left\lbrace\frac{\|\nabla f\|_{L^{1}}}{\|f\|_{L^{\alpha}}}:f\in W_0^{1,1}(M^{n},g)\right\rbrace.
    \end{equation}
    $SD_{\alpha}(M^{n},g)$ is called the Dirichlet $\alpha-$Sobolev constant of $M$.
\end{defn}

\begin{defn}[Definition 9.4 in \cite{li_2012}]
     Given an $n$-dimensional Riemannian manifold $(M,g)$, let us define $SN_{\alpha}(M^{n},g)$ for $1\leq\alpha\leq\frac{n}{n-1}$ as follows:
     \begin{equation}
         SN_{\alpha}(M,g)=\inf\left\lbrace\frac{\|\nabla f\|_{L^{1}}}{\inf_{k\in\mathbb{R}}\|f-k\|_{L^{\alpha}}}:f\in W^{1,1}(M,g)\right\rbrace
     \end{equation}
     $SN_{\alpha}(M,g)$ is called the Neumann $\alpha$-Sobolev constant of $(M,g)$.
\end{defn}

Let us now recall and observe some relationships between the above quantities. It is shown in  \cite[Theorem 9.5]{li_2012} that 
\begin{align}
  SD_{\alpha}(M,g)=ID_{\alpha}(M,g),
\end{align} 
and in \cite[Theorem 9.6]{li_2012} we see that
\begin{align}
    \min\{1,2^{\frac{\alpha-1}{\alpha}}\} IN_{\alpha}(M,g) \le SN_{\alpha}(M,g) \le \max\{1,2^{\frac{\alpha-1}{\alpha}}\} IN_{\alpha}(M,g).
\end{align}

The quantity $SN_{\alpha}$ is a bit mysterious in its present form, mainly because of the term involving $k$. However, the following result, mentioned in \cite{li_2012}, helps us understand it a little better. We will reproduce the statement and proof for the convenience of the reader.
\begin{lem}\label{Neuman-Sobolev-ineq}
    Let $(M,g)$ be an $n$-dimensional Riemannian manifold. Suppose $|M|<\infty$ and $f\in L^{q}$, $q\ge 1$. Then, there exists a unique $k_q(f)$ such that
    \begin{equation}
        \inf_{k\in\mathbb{R}}\|f-k\|_{L^{q}}=\|f-k_{q}(f)\|_{L^{q}}.
    \end{equation}
    Further, $k_q(f)$ is given by the equation
    \begin{equation}
        \int_M\mathrm{sgn}\left(f-k_{q}(f)\right)|f-k_{q}(f)|^{q-1}dV_g=0.
    \end{equation}
    In particular, when $q=2$ we have that
    \begin{equation}
        k_{2}(f)=\frac{1}{|M|}\int_{M}fdV_g=\dashint_{M}fdV_g.
    \end{equation}
\end{lem}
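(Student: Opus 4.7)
My plan is to study the function $\phi(k) = \int_M |f-k|^q \, dV_g$ on $\mathbb{R}$ and show that it attains a unique minimum whose location is characterized by vanishing of the derivative. First I would verify that $\phi$ is well-defined and continuous: since $|M|<\infty$ and $f\in L^q$, the elementary inequality $|f-k|^q \le 2^{q-1}(|f|^q + |k|^q)$ shows $\phi(k)$ is finite for every $k\in\mathbb{R}$, and dominated convergence gives continuity in $k$. Moreover $\phi(k)\to\infty$ as $|k|\to\infty$, because $\|f-k\|_{L^q}\ge \|k\|_{L^q} - \|f\|_{L^q} = |k|\,|M|^{1/q} - \|f\|_{L^q}$ by the reverse triangle inequality. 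Consequently $\phi$ attains a minimum on $\mathbb{R}$.

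Next I would establish uniqueness. For $q>1$, the function $t\mapsto |t|^q$ is strictly convex on $\mathbb{R}$, and integrating over $M$ preserves strict convexity (as long as $f$ is not a.e.\ constant, in which case the claim is trivial). Hence $\phi$ is strictly convex, so its minimizer $k_q(f)$ is unique. The case $q=1$ requires a separate remark since strict convexity fails in general; the statement is correct when $f$ is non-atomic in an appropriate sense, and I would note this subtlety.

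To derive the critical-point equation, I would differentiate $\phi$ under the integral sign. For $q>1$ the map $t\mapsto |t|^q$ is $C^1$ with derivative $q\,\mathrm{sgn}(t)|t|^{q-1}$, which extends continuously through $t=0$. A dominated convergence argument (with majorant $q(|f|+|k|+1)^{q-1}$ on a neighborhood of a fixed $k$) justifies
\begin{equation*}
    \phi'(k) = -q\int_M \mathrm{sgn}(f-k)\,|f-k|^{q-1}\, dV_g.
\end{equation*}
Setting $\phi'(k_q(f))=0$ yields the stated identity. The main technical care goes into justifying this differentiation uniformly near the minimizer; this is the only step that is not entirely routine.

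Finally, for $q=2$ the identity reduces to $\int_M (f-k_2(f))\, dV_g = 0$, from which $k_2(f) = \tfrac{1}{|M|}\int_M f\, dV_g = \dashint_M f\, dV_g$ follows immediately.
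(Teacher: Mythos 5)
Your proposal is correct and follows essentially the same route as the paper: both analyze the function $k\mapsto\int_M|f-k|^q\,dV_g$, use convexity together with coercivity as $|k|\to\infty$ to obtain existence and uniqueness of the minimizer, and characterize it by differentiating under the integral sign. Your additional care in justifying the differentiation and your caveat about the $q=1$ case (where the paper simply asserts a unique critical point, even though medians need not be unique in general) are reasonable refinements of the paper's argument rather than departures from it.
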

\begin{proof}
    Observe that for $q>1$ the function $F(k)= \int_M |f-k|^{q}dV_g$ is strictly convex and differentiable. Furthermore, observe that $\displaystyle\lim_{|k|\rightarrow\infty}\int |f-k|^q=\infty$. Therefore, the function has a unique minimal point, which is also its unique critical point. We calculate that
	\begin{equation}
	F'(k)=	\frac{d}{dk}\int_M |f-k|^qdV_g=q\int_M \text{sign}(f-k)|f-k|^{q-1}dV_g.
	\end{equation}
	In the case that $q=1$, one can observe that for $k_1,k_2 \in \R$, $t \in [0,1]$ that
	\begin{align}
	    F(k_1t+k_2(1-t)) \le tF(k_1)+(1-t)F(k_2),
	\end{align}
	and hence $F$ is convex, differentiable, $\displaystyle\lim_{|k|\rightarrow\infty}F(k)=\infty$, and with only one critical point
	\begin{equation}
	F'(k)=	\frac{d}{dk}\int_M |f-k|dV_g=\int_M \text{sign}(f-k)dV_g=|\{f \ge k\}|-|\{f\le k\}|.
	\end{equation}
\end{proof}

\subsection{Relationship to Classical Poincar{\'e} and Sobolev Inequalities}

Our goal in this subsection is to show how the Neumann and Dirichlet Sobolev constants are related to the classical Poincar{\'e} and Sobolev inequalities. We begin by recalling a result from \cite{li_2012}.

 \begin{prop}[Found in \cite{li_2012}]\label{DirichletToPoincare}
     Let $(M,g)$ be an $n$-dimensional Riemannian manifold with non-empty boundary, and suppose that $SD_{\alpha}(M,g)>0$. Then, for any $1\leq p<\infty$ we have for $f\in W^{1,p}_{0}(M)$ that
  \begin{equation}
         \|f\|_{\frac{\alpha p}{p-(p-1)\alpha}}\leq SD^{-1}_{\alpha}(M,g)\frac{p}{p-\alpha(p-1)}\|\nabla f\|_{L^{p}}.
    \end{equation}
 \end{prop}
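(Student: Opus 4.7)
The plan is to derive the proposition by bootstrapping from the $p=1$ case, which is just the definition of $SD_\alpha$, via the standard ``Moser trick'' of plugging powers of $|f|$ into the $L^1$-Sobolev inequality and then applying H\"older's inequality. Note that for $p=1$ the target exponent is exactly $\alpha$ and the constant $p/(p-\alpha(p-1))$ equals $1$, so the conclusion is immediate from $SD_\alpha(M,g)=\inf \|\nabla f\|_{L^1}/\|f\|_{L^\alpha}$ on $W^{1,1}_0$. So we may henceforth assume $p>1$.

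The key algebraic step is to choose the exponent $s$ so that the resulting inequality is self-similar. Set
\begin{equation*}
    s = \frac{p}{p-\alpha(p-1)},
\end{equation*}
which is the unique value solving $(s-1)\frac{p}{p-1} = s\alpha$, and note $s\alpha = \frac{\alpha p}{p-\alpha(p-1)}$ is precisely the target exponent $q$ on the left-hand side. First assume $f\in C^\infty_c(M)$ so $|f|^s\in W^{1,1}_0(M)$ with weak gradient of size $s|f|^{s-1}|\nabla f|$. Applying the Dirichlet Sobolev inequality to $|f|^s$ gives
\begin{equation*}
    \|f\|_{L^{s\alpha}}^{s} = \bigl\||f|^s\bigr\|_{L^\alpha} \leq SD_\alpha^{-1} \, s\int_M |f|^{s-1}|\nabla f|\,dV_g,
\end{equation*}
and then H\"older's inequality with conjugate exponents $p$ and $p/(p-1)$ yields
\begin{equation*}
    \int_M |f|^{s-1}|\nabla f|\,dV_g \leq \|\nabla f\|_{L^p}\left(\int_M |f|^{(s-1)p/(p-1)}\,dV_g\right)^{(p-1)/p}.
\end{equation*}
By our choice of $s$, $(s-1)p/(p-1)=s\alpha$ and $(s-1) = s\alpha(p-1)/p$, so the right factor is exactly $\|f\|_{L^{s\alpha}}^{s-1}$. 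Dividing by this factor (assuming $\|f\|_{L^{s\alpha}}\neq 0$, otherwise there is nothing to prove) gives the claimed inequality.

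Finally, to extend from $C^\infty_c(M)$ to general $f\in W^{1,p}_0(M)$, I would use the standard density-plus-truncation argument. Approximate $f$ in $W^{1,p}$ by a sequence $f_k\in C^\infty_c(M)$. The inequality for each $f_k$ passes to the limit provided the left-hand side is finite; if it is not \emph{a priori} finite, first truncate by replacing $f_k$ with the bounded compactly supported function $T_N(f_k):=\mathrm{sgn}(f_k)\min(|f_k|,N)$, for which $|T_N(f_k)|^s$ genuinely lies in $W^{1,1}_0$ and the weak chain rule is elementary. The inequality applied to $T_N(f_k)$ gives a uniform bound on $\|T_N(f_k)\|_{L^{s\alpha}}$ in terms of $\|\nabla f_k\|_{L^p}$, and monotone convergence as $N\to\infty$ followed by passage to the limit $k\to\infty$ completes the proof. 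The only mild technical obstacle is verifying that $|f|^s$ enjoys the chain rule at this level of regularity, which is handled by the truncation argument described above.
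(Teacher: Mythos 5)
Your proposal is correct and follows essentially the same route as the paper's proof: choosing the exponent $s=q=\tfrac{p}{p-\alpha(p-1)}$, applying the $L^1$ Dirichlet--Sobolev inequality to $|f|^{s}$, using H\"older with exponents $p$ and $\tfrac{p}{p-1}$, and dividing through by $\|f\|_{L^{s\alpha}}^{s-1}$. Your truncation-plus-density argument is a slightly more careful version of the paper's appeal to the density of $W_0^{1,p}\cap L^{q}$ in $W_0^{1,p}$, but the substance is identical.
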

 \begin{proof}
     For the moment, suppose we are given $f\in W_{0}^{1,p}\bigcap L^{q}$, for $q=\frac{p}{p-\alpha(p-1)}$. Then, we have that $g=f^{q}$
    \begin{equation}
         \begin{split}
             \|f\|^{\frac{p}{p-\alpha(p-1)}}_{\frac{\alpha p}{p-\alpha(p-1)}}&=\left(\int_M |g|^{\alpha}dV_g\right)^{\frac{1}{\alpha}}\leq SD^{-1}_{\alpha}\left(M\right)\int_M|\nabla g|dV_g
             \\
             &=q SD^{-1}_{\alpha}(M)\int_M|g|^{q-1}|\nabla f|dV_g
             \\
             &\leq qSD^{-1}_{\alpha}(M)\|f\|^{\frac{\alpha p}{p-\alpha(p-1)}\frac{p-1}{p}}_{\frac{\alpha p}{p-\alpha(p-1)}}\|\nabla f\|_{L^{p}}.
         \end{split}
     \end{equation}
     Rearranging the above gives
    \begin{equation}
        \|f\|_{\frac{\alpha p}{p-\alpha(p-1)}}\leq q SD^{-1}_{\alpha}(M)\|\nabla f\|_{L^{p}}.
     \end{equation}
     In order to remove the assumption that $f$ is in $L^{q}$, we note that $W_{0}^{1,p}\bigcap L^{q}$ is dense in $W_{0}^{1,p}$. 
 \end{proof}
 
 We now use this result to prove a Sobolev inequality.

\begin{thm}\label{DirichletToSobolev}
    Let $M$ be asymptotically flat and suppose that $SD_{\alpha}(B_r(x),g)\ge \Lambda >0$, $\forall x \in M, r > 0$. Then, for any $1\leq p<\infty$ we have for $f\in W^{1,p}(M)$ that
    \begin{equation}
        \|f\|_{\frac{\alpha p}{p-(p-1)\alpha}}\leq \Lambda^{-1}\frac{p}{p-\alpha(p-1)}\| f\|_{W^{1,p}}
    \end{equation}
\end{thm}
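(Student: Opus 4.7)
The plan is to deduce the global Sobolev inequality on the asymptotically flat (hence complete, non-compact) manifold $M$ from the uniform interior Dirichlet--Sobolev inequality of Proposition \ref{DirichletToPoincare} by an exhaustion--cutoff argument. Set $q = \tfrac{\alpha p}{p - \alpha(p-1)}$ and $q' = \tfrac{p}{p-\alpha(p-1)}$.

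First I would fix a basepoint $x_0 \in M$ and, for each $R>0$, choose a Lipschitz cutoff $\eta_R:M\to[0,1]$ satisfying $\eta_R \equiv 1$ on $B_R(x_0)$, $\mathrm{supp}(\eta_R) \subset B_{2R}(x_0)$, and $|\nabla\eta_R|_g \leq 2/R$ -- for instance by composing a standard piecewise linear function with $d_g(x_0,\cdot)$. Since $\eta_R f$ has compact support in the open set $B_{2R}(x_0)$, it lies in $W_0^{1,p}(B_{2R}(x_0))$. The hypothesis gives $SD_\alpha(B_{2R}(x_0),g) \geq \Lambda$, so Proposition \ref{DirichletToPoincare} applied to $\eta_R f$ on the domain $B_{2R}(x_0)$ yields
\begin{equation*}
\|\eta_R f\|_{L^q(M)} \leq \Lambda^{-1}\, q'\, \|\nabla(\eta_R f)\|_{L^p(M)}.
\end{equation*}

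Next I would estimate the right-hand side using the Leibniz rule and the support properties of $\eta_R$:
\begin{equation*}
\|\nabla(\eta_R f)\|_{L^p} \leq \|\nabla\eta_R\,f\|_{L^p(B_{2R}\setminus B_R)} + \|\eta_R\nabla f\|_{L^p(M)} \leq \tfrac{2}{R}\|f\|_{L^p(M)} + \|\nabla f\|_{L^p(M)}.
\end{equation*}
Since $f\in W^{1,p}(M)$, we have $\|f\|_{L^p(M)}<\infty$, so the first term vanishes as $R\to\infty$. For the left-hand side, note $\eta_R f \to f$ pointwise with $|\eta_R f|\leq|f|$, so Fatou's lemma gives $\|f\|_{L^q(M)} \leq \liminf_{R\to\infty}\|\eta_R f\|_{L^q(M)}$. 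Passing to the limit,
\begin{equation*}
\|f\|_{L^q(M)} \leq \Lambda^{-1}\, q'\, \|\nabla f\|_{L^p(M)} \leq \Lambda^{-1}\, q'\, \|f\|_{W^{1,p}(M)},
\end{equation*}
which is the desired inequality.

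The argument is essentially routine, so there is no serious obstacle. The only points that require a moment's care are: (i) confirming that the Dirichlet--Sobolev estimate of Proposition \ref{DirichletToPoincare} is validly applied on $B_{2R}(x_0)$ viewed as a Riemannian manifold with (possibly non-smooth) boundary -- this is fine because the definition of $ID_\alpha$ in Lemma \ref{NeumanToDirichletIsoperimetricEst} and the density of compactly supported smooth functions in $W_0^{1,p}$ of an open set are insensitive to boundary regularity; and (ii) ensuring that the cutoff $\eta_R$ has gradient bounded by $2/R$ in the metric $g$, which is automatic once $\eta_R$ is taken as a function of $g$-distance. The completeness and non-compactness of $M$, both consequences of asymptotic flatness, are what allow the exhaustion $\{B_R(x_0)\}_{R>0}$ to cover all of $M$.
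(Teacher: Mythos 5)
Your proof is correct and follows essentially the same route as the paper: apply Proposition \ref{DirichletToPoincare} to a cutoff $\eta f$ supported in a large metric ball, expand $\nabla(\eta f)$ by the Leibniz rule, and pass to the limit along an exhaustion. The only difference is that you scale the cutoff gradient as $2/R$ rather than bounding it by $1$, which makes the $\|f\|_{L^p}$ term vanish in the limit and actually yields the sharper homogeneous estimate $\|f\|_{L^q}\leq \Lambda^{-1}\tfrac{p}{p-\alpha(p-1)}\|\nabla f\|_{L^p}$, of which the stated inequality is an immediate consequence.
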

\begin{proof}
    Consider $f \in W^{1,p}(M)$ and let $\phi \in C_c^{\infty}(M)$ so that $0\le \phi \le 1$ and $|\nabla \phi|\le 1$. Then we have that $\phi f \in W_0^{1,p}(B_r(x))$ for some $x \in M, r >0$ and so by Proposition \ref{DirichletToPoincare} we find 
\begin{align}
        \|\phi f\|_{\frac{\alpha p}{p-(p-1)\alpha}}&\leq SD^{-1}_{\alpha}(B_r(x))\frac{p}{p-\alpha(p-1)}\|\nabla (\phi f)\|_{L^{p}(B_r(x)}
        \\& = SD^{-1}_{\alpha}(B_r(x))\frac{p}{p-\alpha(p-1)}\left(\|\phi \nabla  f\|_{L^{p}(B_r(x)}+\|f\nabla \phi \|_{L^{p}(B_r(x)} \right)
         \\&\le \Lambda\frac{p}{p-\alpha(p-1)}\left(\| \nabla  f\|_{L^{p}(B_r(x))}+\|f \|_{L^{p}(B_r(x))} \right)
          \\&\le \Lambda\frac{p}{p-\alpha(p-1)}\left(\| \nabla  f\|_{L^{p}(M)}+\|f \|_{L^{p}(M)} \right).
\end{align}
Now by choosing a sequence of functions $\phi_n \in C_c^{\infty}(M)$ that converge locally uniformly to $1$ so that $0\le \phi_n \le 1$, $|\nabla \phi_n| \le 1$, and supp$(\phi_n) \subset B_n(x)$ for some $x \in M$ we can apply the previous inequality to obtain the desired result
\begin{align}
      \| f\|_{\frac{\alpha p}{p-(p-1)\alpha}}= \lim_{n \rightarrow \infty}\|\phi_n f\|_{\frac{\alpha p}{p-(p-1)\alpha}}&
        \le \Lambda\frac{p}{p-\alpha(p-1)}\left(\| \nabla  f\|_{L^{p}(M)}+\|f \|_{L^{p}(M)} \right).
\end{align}
\end{proof}

We have the following proposition, which can be found, in essence, in \cite[Corollary 9.9]{li_2012}.

\begin{prop}\label{NeumantoSobolev}
    Let $(M,g)$ be an $n$-dimensional Riemannian manifold, $1<p<n$, $1<\alpha\leq\frac{n}{n-1}$,  $f \in W^{1,p}(M)$, and let $k(f)$ be the constant such that
    \begin{equation}\label{eq:Sobolev-Neumann-constant}
        \int_M\mathrm{sign}\left(f-k(f)\right)\left|f-k(f)\right|^{\frac{(\alpha-1)p}{p-\alpha(p-1)}}dV_g=0.
    \end{equation}
    Then, we have
    \begin{equation}
        \begin{split}
            &\|f-k(f)\|_{L^{\frac{\alpha p}{p-(p-1)\alpha}}}\leq\frac{p}{p-(p-1)\alpha}SN^{-1}_{\alpha}(M)\|\nabla f\|_{L^{p}}
            \\
            &\|f\|_{L^{\frac{\alpha p}{p-\alpha(p-1)}}}\leq C\left(p,\alpha,SN_{\alpha}(M),|M|\right)\|f\|_{W^{1,p}}
        \end{split}
    \end{equation}
\end{prop}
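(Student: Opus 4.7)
The plan is to mirror the proof of Proposition \ref{DirichletToPoincare}, replacing the Dirichlet Sobolev inequality by the Neumann one applied to a carefully chosen signed power of $f-k(f)$. The key observation is that the defining equation for $k(f)$ is exactly the condition that selects $0$ as the $L^\alpha$-minimizing constant for that signed power.

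First I would set $q = \frac{p}{p-\alpha(p-1)}$ and introduce $h = |f-k(f)|^{q-1}(f-k(f))$, so that $|h|^\alpha = |f-k(f)|^{\alpha q}$ and $|\nabla h| = q|f-k(f)|^{q-1}|\nabla f|$. A short computation shows that $(\alpha-1)q = \frac{(\alpha-1)p}{p-\alpha(p-1)}$, so the condition \eqref{eq:Sobolev-Neumann-constant} defining $k(f)$ is precisely
\begin{equation*}
\int_M \mathrm{sign}(h)\,|h|^{\alpha-1}\,dV_g = 0.
\end{equation*}
By Lemma \ref{Neuman-Sobolev-ineq}, this is exactly the Euler--Lagrange condition identifying $0$ as the unique minimizer of $c\mapsto \|h-c\|_{L^\alpha}$, so that $\|h\|_{L^\alpha} = \inf_{c\in\mathbb{R}}\|h-c\|_{L^\alpha}$.

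Then the definition of $SN_\alpha(M,g)$ applied to $h$, followed by H\"older's inequality with exponents $p$ and $p/(p-1)$, gives
\begin{equation*}
\|h\|_{L^\alpha} \leq SN_\alpha^{-1}(M,g)\,\|\nabla h\|_{L^1} \leq q\,SN_\alpha^{-1}(M,g)\,\|\nabla f\|_{L^p}\,\|f-k(f)\|_{L^{(q-1)p/(p-1)}}^{q-1}.
\end{equation*}
The choice of $q$ is exactly what makes $(q-1)p/(p-1)=\alpha q$; expanding $\|h\|_{L^\alpha} = \|f-k(f)\|_{L^{\alpha q}}^{q}$ and using the identity $q-\alpha q(p-1)/p=1$, which holds by definition of $q$, yields the first inequality.

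For the second inequality, I would apply the triangle inequality $\|f\|_{L^{\alpha q}} \leq \|f-k(f)\|_{L^{\alpha q}} + |k(f)|\,|M|^{1/(\alpha q)}$ and bound $|k(f)|$ via $|k(f)|\,|M|^{1/p} \leq \|f\|_{L^p} + \|f-k(f)\|_{L^p}$. Since $\alpha q \geq p$ (which follows from $\alpha\geq 1$ and the formula for $q$), H\"older gives $\|f-k(f)\|_{L^p}\leq |M|^{1/p-1/(\alpha q)}\|f-k(f)\|_{L^{\alpha q}}$, and combining with the first inequality already proved expresses everything in terms of $\|f\|_{L^p}$ and $\|\nabla f\|_{L^p}$ with constants depending only on $p,\alpha,SN_\alpha(M,g),|M|$.

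I do not foresee a serious obstacle here; the entire proof is an algebraic rearrangement once one identifies the signed power $h$ and recognizes that the peculiar-looking exponent $\frac{(\alpha-1)p}{p-\alpha(p-1)}$ in \eqref{eq:Sobolev-Neumann-constant} is precisely what guarantees that the infimum in the definition of $SN_\alpha$ is attained at $c=0$ when applied to $h$. The only minor care needed is tracking the chain of exponent identities implied by the definition of $q$.
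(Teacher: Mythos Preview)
Your proposal is correct and follows essentially the same approach as the paper: both introduce the signed power $\psi=\mathrm{sign}(f-k(f))|f-k(f)|^{q}$ with $q=\tfrac{p}{p-\alpha(p-1)}$, observe that the defining equation for $k(f)$ forces the minimizing constant in $SN_\alpha$ to be zero, apply the Neumann Sobolev inequality and H\"older, and then bound $|k(f)|$ via the triangle inequality to obtain the second estimate. The only point you omit that the paper makes explicit is the reduction to $f\in W^{1,p}\cap L^{\alpha q}$ by density, which is needed to justify dividing by $\|f-k(f)\|_{L^{\alpha q}}^{q-1}$; this is routine and does not affect your argument.
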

\begin{proof}
    Let $f\in W^{1,p}$ and assume for the moment that $f\in L^{\alpha q}$, for some $q$ to be determined. Let $k(f)$ be the unique number such that
    \begin{equation}
        \int_M\mathrm{sign}\left(f-k(f)\right)\left|f-k(f)\right|^{q(\alpha-1)}dV_g=0.
    \end{equation}
    Let us define a function $\psi$ as follows
    \begin{equation}
        \psi=\mathrm{sign}\left(f-k(f)\right)\left|f-k(f)\right|^{q}.
    \end{equation}
    Then, by our choice of $k(f)$, we see that 
    \begin{equation}
        \int_M\mathrm{sign}\left(\psi\right)|\psi|^{\alpha-1}dV_g=0,
    \end{equation}
    and so
    \begin{equation}
        \inf_{k\in\mathbb{R}}\int_M |\psi-k|^{\alpha}dV_g=\int_M |\psi|^{\alpha}dV_g.
    \end{equation}
    
    Let us now apply the Sobolev-Neumann inequality to $\psi$ to obtain
    \begin{equation}
        \left(\int_M|\psi|^{\alpha}dV_g\right)^{\frac{1}{\alpha}}\leq SN^{-1}_{\alpha}\int_M \left|\nabla \psi\right|dV_g.
    \end{equation}
    Plugging in our formula for $\psi$, 
    \begin{equation}
        \left(\int_M|f-k(f)|^{q\alpha}dV_g\right)^{\frac{1}{\alpha}}\leq q SN^{-1}_{\alpha}\int_M|f-k(f)|^{q-1}\left|\nabla f\right|dV_g.
    \end{equation}
    Applying H{\"o}lder's inequality to the right hand side leads to 
    \begin{equation}
        \left(\int_M|f-k(f)|^{q\alpha}dV_g\right)^{\frac{1}{\alpha}}\leq q SN^{-1}_{\alpha}\left(\int_M|f-k(f)|^{\frac{(q-1)p}{p-1}}dV_g\right)^{\frac{p-1}{p}}\|\nabla f\|_{L^{p}}.
    \end{equation}
    In order to match exponents on the left and the right of the above inequality, we choose $q$ to be the solution to
    \begin{equation}
        q\alpha=\frac{(q-1)p}{p-1},
    \end{equation}
    which is $q=\frac{p}{p-\alpha(p-1)}$. Note that because $p<n$ we have $\tfrac{n}{n-1}<\frac{p}{p-1}$, so $q$ is well defined. Plugging this value in for $q$ shows that if $f$ is in $L^{\alpha q}$, then we have
    \begin{equation}\label{FirstNeumanIneq}
        \|f-k(f)\|_{L^{\frac{\alpha p}{p-\alpha(p-1)}}}\leq SN^{-1}_{\alpha}\frac{p}{p-\alpha(p-1)}\|\nabla f\|_{L^{p}}.
    \end{equation}
    
    Still assuming that $f$ is in $L^{\frac{\alpha p}{p-\alpha(p-1)}}$, we get that
    \begin{equation}\label{first-improved-integrability}
        SN^{-1}_{\alpha}\frac{p}{p-\alpha(p-1)}\|\nabla f\|_{L^{p}}+|M|^{\frac{p-\alpha(p-1)}{p\alpha}}|k(f)|\geq \|f\|_{L^{\frac{\alpha p}{p-\alpha(p-1)}}}.
    \end{equation}
    In fact, since $\alpha>1$, we can estimate $|k(f)|$ in terms of $\|f\|_{W^{1,p}}$. This is done as follows: We begin with the estimate
    \begin{equation}
        |M||k(f)|=\int_{M}|k(f)|dV_g\le\int_{M}|k(f)-f|+|f|dV_g.
    \end{equation}
    We may now use H{\"o}lder's inequality twice to obtain
    \begin{equation}
        |M||k(f)|\leq|M|^{\frac{2\alpha p-p-\alpha}{\alpha p}}\|f-k(f)\|_{\frac{\alpha p}{p-(p-1)\alpha}}+|M|^{\frac{p-1}{p}}\|f\|_{L^{p}}.
    \end{equation}
    At this point, we can apply \eqref{FirstNeumanIneq} to obtain
    \begin{equation}
        |M||k(f)|\leq |M|^{\frac{2\alpha p-p-\alpha}{\alpha p}}\frac{p}{p-(p-1)\alpha}SN^{-1}_{\alpha}(M)\|\nabla f\|_{L^{p}}+|M|^{\frac{p-1}{p}}\|f\|_{p}.
    \end{equation}
    We can now put everything together to get
    \begin{equation}
        \|f\|_{\frac{\alpha p}{p-(p-1)\alpha}}\leq C(p,\alpha,SN_{\alpha},|M|)\|f\|_{W^{1,p}}
    \end{equation}
    so long as $f$ is in $L^{\alpha q}$. Since it is well known that $W^{1,p}\bigcap L^{\alpha q}$ is dense in $W^{1,p}$, this completes the proof.

\end{proof}

We now establish that a bound on the Neumann isoperimetric constant of balls gives a Poincar{\'e} inequality.
\begin{lem}\label{Poincare-from-uniform-SN}
    Let $(M,g)$ be a $n$-dimensional Riemannian manifold. Suppose there exists a $\Lambda>0$ such that, for any ball $B_r(x)$, we have $SN_{\tfrac{n}{n-1}}(B_r(x),g)\geq\Lambda$. Then, for any $f$ in $W^{1,p}(M)$ we have
    \begin{equation}
        \left(\dashint_{B_r(x)}|f-f_{B_r(x)}|^{n}dV_{g}\right)^{\frac{1}{n}}\leq(n-1)\Lambda^{-1}|B_r(x)|^{\frac{1}{n}}\left(\dashint_{B_r(x)}|\nabla f|^{\frac{n}{2}}dV_{g}\right)^{\frac{2}{n}},
    \end{equation}
    from which it follows that
    \begin{equation}
        \dashint_{B_r(x)}|f-f_{B_r(x)}|dV_{g}\leq (n-1)\Lambda^{-1}|B_r(x)|^{\frac{1}{n}}\left(\dashint_{B_r(x)}|\nabla f|^{p}dV_{g}\right)^{\frac{1}{p}}
    \end{equation}
    for any $p\geq\frac{n}{2}$.
\end{lem}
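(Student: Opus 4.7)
The plan is to apply Proposition \ref{NeumantoSobolev} on each ball $B_r(x)$ with the specific choice $\alpha=\tfrac{n}{n-1}$ and $p=\tfrac{n}{2}$, and then rescale to averages. The first step is a direct computation of the exponents appearing in Proposition \ref{NeumantoSobolev} for this specific choice. One quickly finds $p-(p-1)\alpha = \tfrac{n}{2(n-1)}$, so that $\tfrac{\alpha p}{p-(p-1)\alpha}=n$ and $\tfrac{p}{p-(p-1)\alpha}=n-1$. Moreover $\tfrac{(\alpha-1)p}{p-\alpha(p-1)}=1$, which is the key observation: the implicit equation \eqref{eq:Sobolev-Neumann-constant} defining $k(f)$ collapses to $\int_{B_r(x)}(f-k(f))\,dV_g=0$, forcing $k(f)=f_{B_r(x)}$. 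Using also the uniform bound $SN_{n/(n-1)}(B_r(x),g)\ge \Lambda$, Proposition \ref{NeumantoSobolev} then gives
\begin{equation*}
    \|f-f_{B_r(x)}\|_{L^n(B_r(x))} \leq (n-1)\Lambda^{-1}\|\nabla f\|_{L^{n/2}(B_r(x))}.
\end{equation*}

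The second step is to convert this into the averaged form stated in the lemma. Writing $\|f-f_{B_r(x)}\|_{L^n(B_r(x))}=|B_r(x)|^{1/n}\bigl(\dashint_{B_r(x)}|f-f_{B_r(x)}|^n\,dV_g\bigr)^{1/n}$ and $\|\nabla f\|_{L^{n/2}(B_r(x))}=|B_r(x)|^{2/n}\bigl(\dashint_{B_r(x)}|\nabla f|^{n/2}\,dV_g\bigr)^{2/n}$, and dividing both sides by $|B_r(x)|^{1/n}$, one obtains the first claimed inequality directly.

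The third step deduces the second inequality from the first. By Jensen (applied to the convex map $t\mapsto t^n$ on averaged integrals), $\dashint_{B_r(x)}|f-f_{B_r(x)}|\,dV_g \leq \bigl(\dashint_{B_r(x)}|f-f_{B_r(x)}|^n\,dV_g\bigr)^{1/n}$. For $p\geq \tfrac{n}{2}$ one has $\tfrac{2p}{n}\geq 1$, so Jensen applied to the convex map $t\mapsto t^{2p/n}$ yields $\bigl(\dashint_{B_r(x)}|\nabla f|^{n/2}\,dV_g\bigr)^{2/n}\leq \bigl(\dashint_{B_r(x)}|\nabla f|^{p}\,dV_g\bigr)^{1/p}$. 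Chaining these two estimates with the first inequality produces the second claim.

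There is no substantive obstacle here: the argument is essentially a specialization of the already established Proposition \ref{NeumantoSobolev}. The only non-mechanical observation is that the exponents happen to align so perfectly with the choice $\alpha=\tfrac{n}{n-1}$, $p=\tfrac{n}{2}$ that the implicit defining constant $k(f)$ equals the arithmetic mean $f_{B_r(x)}$, which is what allows the final inequality to be phrased purely in terms of averages without introducing any extra multiplicative constants.
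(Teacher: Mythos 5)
Your proposal is correct and follows essentially the same route as the paper: specialize Proposition \ref{NeumantoSobolev} to $\alpha=\tfrac{n}{n-1}$, $p=\tfrac{n}{2}$, observe that the defining equation for $k(f)$ then reduces to $\int(f-k(f))=0$ so that $k(f)=f_{B_r(x)}$, rescale to averages, and upgrade the gradient exponent to general $p\geq\tfrac{n}{2}$. The only cosmetic difference is that you phrase the last step via Jensen on averaged integrals where the paper uses H\"{o}lder on unaveraged ones; these are equivalent.
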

\begin{proof}
    The starting point is to make sure that the constant $k(f)$ in Equation \eqref{eq:Sobolev-Neumann-constant} is 
    \begin{equation}
        f_{B_r(x)}=\dashint_{B_r(x)}fdV_{g}.
    \end{equation}
    Inspecting the exponent in Equation \eqref{eq:Sobolev-Neumann-constant}, while keeping in mind that $\alpha=\tfrac{n}{n-1}$, shows that we need to pick $p$ such that
    \begin{equation}
        1=\frac{(\alpha-1)p}{p-\alpha(p-1)}=\frac{p}{n-p}.
    \end{equation}
    That is, we have $p=\tfrac{n}{2}$. So, using this exponent in Proposition \ref{NeumantoSobolev}, we get
    \begin{equation}
        \left(\int_{B_r(x)}|f-f_{B_r(x)}|^{n}dV_{g}\right)^{\frac{1}{n}}\leq (n-1)\Lambda^{-1}\left(\int_{B_r(x)}|\nabla f|^{\frac{n}{2}}dV_{g}\right)^{\frac{2}{n}}.
    \end{equation}
    Now, we can rewrite this in terms of averages as follows:
    \begin{equation}
        \left(\dashint_{B_r(x)}|f-f_{B_r(x)}|^{n}dV_{g}\right)^{\frac{1}{n}}|B_r(x)|^{\frac{1}{n}}\leq(n-1)\Lambda^{-1}|B_r(x)|^{\frac{2}{n}}\left(\dashint_{B_r(x)} |\nabla f|^{\frac{n}{2}}dV_{g}\right)^{\frac{n}{2}}.
    \end{equation}
    Dividing both sides by $|B_r(x)|^{\frac{1}{n}}$ gives the first result.
    
    To get the second result, we can begin by applying H{\"o}lder's inequality to get
    \begin{equation}
        \begin{split}
            \int_{B_r(x)}|f-f_{B_r(x)}|dV_{g}&\leq|B_r(x)|^{\frac{n-1}{n}}\left(\int_{B_r(x)}|f-f_{B_r(x)}|^{n}dV_{g}\right)^{\frac{1}{n}}
            \\
            &\leq|B_r(x)|^{\frac{n-1}{n}}(n-1)\Lambda^{-1}\left(\int_{B_r(x)}|\nabla f|^{\frac{n}{2}}dV_{g}\right)^{\frac{2}{n}}
            \\
            &\leq|B_r(x)|^{\frac{n-1}{n}}(n-1)\Lambda^{-1}|B_r(x)|^{\frac{2(\sigma-1)}{n\sigma}}\left(\int_{B_r(x)}|\nabla f|^{\frac{n\sigma}{2}}dV_{g}\right)^{\frac{2}{n\sigma}}
            \\
            &=(n-1)\Lambda^{-1}|B_r(x)|^{1+\frac{1}{n}}\left(\dashint_{B_r(x)}|\nabla f|^{\frac{n\sigma}{2}}dV_{g}\right)^{\frac{2}{n\sigma}}
        \end{split}
    \end{equation}
    So, if we divide out by $|B_r(x)|$ and let $p=\frac{n\sigma}{2}$, we get the second result.
\end{proof}

\subsection{Morrey's Inequality}
In this subsection we would like to understand what conditions are needed to conclude a uniform Morrey inequality from a uniform Sobolev inequality. We start by defining a local H\"{o}lder norm for functions which will be advantageous when implying a Morrey inequality from a Poincar{\'e} inequality.

\begin{defn}
    Let $(M,g)$ be a Riemannian manifold. The H\"{o}lder norm of scale $R>0$ and power $0<\gamma < 1$ is defined as follows:
    \begin{align}
    \|u\|_{C_R^{0,\gamma}(M,g)} =\sup_{x\in M}|u(x)|+\sup_{x,y\in M, d_g(x,y)<R}\frac{|u(x)-u(y)|}{d_g(x,y)^{\gamma}}.
\end{align}
\end{defn}
\begin{prop}\label{prop:equiv-Holder-norms}
     Let $(M,g)$ be a Riemannian manifold. For any radius $R>0$ and function $u$, we have
     \begin{equation}
         \|u\|_{C_{R}^{0,\gamma}(M,g)}\leq\|u\|_{C^{0,\gamma}(M,g)}\leq\left(1+\frac{2}{R^{\gamma}}\right)\|u\|_{C_{R}^{0,\gamma}(M,g)}.
     \end{equation}
\end{prop}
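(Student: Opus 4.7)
The plan is to prove both inequalities directly from the definitions, splitting the difference quotient in the global H\"older seminorm according to whether the pair $(x,y)$ lies within distance $R$ or not.

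The first inequality $\|u\|_{C_R^{0,\gamma}}\leq\|u\|_{C^{0,\gamma}}$ is immediate: the supremum defining the H\"older seminorm on the right is taken over a larger set of pairs (all $x,y\in M$) than the one on the left (pairs with $d_g(x,y)<R$), so the same $\sup|u|$ term plus a larger seminorm dominates.

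For the second inequality, I will bound the global H\"older seminorm $[u]_\gamma := \sup_{x\neq y}\tfrac{|u(x)-u(y)|}{d_g(x,y)^\gamma}$ by splitting into two cases. For pairs with $d_g(x,y)<R$, the quotient is bounded by the local seminorm $[u]_{R,\gamma}$ by definition. For pairs with $d_g(x,y)\geq R$, I use the crude estimate
\begin{equation*}
    \frac{|u(x)-u(y)|}{d_g(x,y)^\gamma}\leq \frac{|u(x)|+|u(y)|}{R^\gamma}\leq \frac{2\sup_M|u|}{R^\gamma}.
\end{equation*}
Combining these gives $[u]_\gamma \leq [u]_{R,\gamma}+\tfrac{2}{R^\gamma}\sup_M|u|$, so
\begin{equation*}
    \|u\|_{C^{0,\gamma}}=\sup_M|u|+[u]_\gamma\leq \|u\|_{C_R^{0,\gamma}}+\frac{2}{R^\gamma}\sup_M|u|\leq\Bigl(1+\frac{2}{R^\gamma}\Bigr)\|u\|_{C_R^{0,\gamma}},
\end{equation*}
using that $\sup_M|u|\leq\|u\|_{C_R^{0,\gamma}}$.

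There is no real obstacle here; the statement is essentially a bookkeeping observation and the proof is two short lines once the splitting is set up. The only subtlety worth noting is that if $M$ has diameter less than $R$ then the second case is vacuous and the inequality is trivially sharper; the stated form of the bound handles both cases uniformly.
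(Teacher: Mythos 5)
Your proof is correct and follows essentially the same argument as the paper: the first inequality is immediate from the definitions, and the second is obtained by splitting pairs $(x,y)$ according to whether $d_g(x,y)<R$ or $d_g(x,y)\geq R$ and using the crude bound $\tfrac{|u(x)|+|u(y)|}{R^{\gamma}}\leq\tfrac{2}{R^{\gamma}}\sup_M|u|$ in the second case. Nothing further is needed.
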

\begin{proof}
    The first inequality 
    follows quickly from the definition of $\|u\|_{C_{R}^{0,\gamma}(M,g)}$. For the second inequality, for any $x,y$ in $M$ we have one of two cases. Either $d_{g}(x,y)<R$ or $d_{g}(x,y)\geq R$. In the first case, we have
    \begin{equation}
        \frac{|u(x)-u(y)|}{d_{g}(x,y)^{\gamma}}\leq\sup_{x,y\in M,d_{g}(x,y)<R}\frac{|u(x)-u(y)|}{d_{g}(x,y)^{\gamma}},
    \end{equation}
    and in the second case, we have
    \begin{equation}
        \frac{|u(x)-u(y)|}{d_{g}(x,y)^{\gamma}}\leq\frac{|u(x)|+|u(y)|}{R^{\gamma}}\leq\frac{2}{R^{\gamma}}\sup_{x\in M}|u(x)|.
    \end{equation}
\end{proof}

Now we show that a uniform, local Poincar{\'e} inequality combined with volume growth bounds for small balls implies a uniform Morrey inequality exists. This is the analytical backbone of both Theorem \ref{Thm-Mass Stability} and Theorem \ref{Thm-Torus Stability}.

\begin{thm}\label{SobolevToMorrey}
Let $(M,g)$ be an $n$-dimensional Riemannian manifold. Fix $p > 1$ and $R>0$. Assume there are constants $C_P$ and $C_V$ so that the following hold for all $r\in(0,2R)$ and $x\in M$:
\begin{align}
\dashint_{B_r(x)} |u-u_{B_r(x)}|dV_g \le C_P \Diam(B_r(x))^{\mu}\left( \dashint_{B_r(x)} |\nabla u|^p dV_g \right)^{\frac{1}{p}},
\end{align}
for any $u\in W^{1,p}(M,g)$ and
\begin{align}
C_V^{-1} r^n\le |B_r(x)| \le C_V r^n\label{LowerVolumeBound}.
\end{align}
Then, for all $u \in C^1(M)$, points $x,y \in M$ such that $d_g(x,y) \le R$, and $\mu > \tfrac{n}{p}$, we have
\begin{align}
\frac{|u(x)-u(y)|}{d_g(x,y)^{\frac{p\mu-n}{p}}} \le C(p,\mu, C_V,C_P) \left( \int_{M} |\nabla u|^p dV_g \right)^{\frac{1}{p}},
\end{align} 
and
\begin{align}
    \|u\|_{C_R^{0,\gamma}(M,g)} \le C(p,\mu, C_V,C_P)\left(\int_M|u|dV_g+\left( \int_{M} |\nabla u|^p dV_g \right)^{\frac{1}{p}}\right),
\end{align}
where $\gamma = \frac{p\mu-n}{p}$.
\end{thm}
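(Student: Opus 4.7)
The plan is a Campanato-style iteration on averages: show that the means $u_{B_r(x)}$ converge to $u(x)$ as $r \to 0$ with a rate controlled by $\|\nabla u\|_{L^p(M)}$, then compare values at two nearby points via a common surrounding ball. The ingredient $\mu > n/p$ is exactly what will make the telescoping sum geometric. First I would establish a one-scale doubling estimate for $r \le 2R$: combining the volume-ratio bound $|B_r(x)|/|B_{r/2}(x)| \le 2^n C_V^2$ coming from \eqref{LowerVolumeBound}, the Poincar\'e hypothesis applied on $B_r(x)$, and the lower bound $|B_r(x)|^{1/p} \ge C_V^{-1/p} r^{n/p}$ used to convert the normalized $L^p$ average of $|\nabla u|$ into an $L^p$ integral, one obtains
\begin{equation*}
|u_{B_{r/2}(x)} - u_{B_r(x)}| \le \frac{|B_r(x)|}{|B_{r/2}(x)|}\dashint_{B_r(x)}|u - u_{B_r(x)}|\,dV_g \le C_1(p,\mu,C_V,C_P)\, r^{\mu - n/p}\,\|\nabla u\|_{L^p(B_r(x))}.
\end{equation*}

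Next I would iterate this estimate at dyadic radii $r_k = r/2^k$ for $r \le R$: each term of the telescoping sum $u_{B_r(x)} - u_{B_{r_K}(x)} = \sum_{k=0}^{K-1}(u_{B_{r_k}(x)} - u_{B_{r_{k+1}}(x)})$ is controlled by $C_1\, r^{\mu - n/p}\, 2^{-k(\mu - n/p)}\,\|\nabla u\|_{L^p(M)}$, so the geometric series converges, and letting $K \to \infty$ (with continuity of $u \in C^1$ identifying $\lim_K u_{B_{r_K}(x)}$ with $u(x)$) yields
\begin{equation*}
|u(x) - u_{B_r(x)}| \le C_2(p,\mu,C_V,C_P)\, r^{\mu - n/p}\,\|\nabla u\|_{L^p(M)}, \qquad 0 < r \le R.
\end{equation*}
For two points $x,y$ with $d := d_g(x,y) \le R$, the inclusions $B_d(x), B_d(y) \subset B_{2d}(x)$ let me apply the one-scale argument to each of the pairs $(B_d(x), B_{2d}(x))$ and $(B_d(y), B_{2d}(x))$, bounding $|u_{B_d(x)} - u_{B_{2d}(x)}|$ and $|u_{B_d(y)} - u_{B_{2d}(x)}|$ each by $C_3\, d^{\mu - n/p}\|\nabla u\|_{L^p(M)}$. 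The triangle inequality
\begin{equation*}
|u(x) - u(y)| \le |u(x) - u_{B_d(x)}| + |u_{B_d(x)} - u_{B_{2d}(x)}| + |u_{B_{2d}(x)} - u_{B_d(y)}| + |u_{B_d(y)} - u(y)|,
\end{equation*}
together with the pointwise estimate above applied at scale $d$ to the endpoint terms, yields the first displayed inequality with $\gamma = (p\mu - n)/p$.

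For the second inequality, the H\"older seminorm is already controlled by the first part, and the supremum follows from $|u(x)| \le |u(x) - u_{B_R(x)}| + |u_{B_R(x)}|$: I would bound the first term using the pointwise estimate at $r = R$, and the second by $|B_R(x)|^{-1}\|u\|_{L^1(M)} \le C_V R^{-n}\|u\|_{L^1(M)}$. The argument is essentially routine; the only real care needed is bookkeeping the scale, which I regard as the main obstacle: the Poincar\'e inequality is only assumed for $r \le 2R$, which is why the iteration terminates at $r = R$, why the H\"older control holds only for pairs with $d_g(x,y) \le R$, and why the norm on the left of the second inequality is the truncated $C_R^{0,\gamma}$ rather than the global H\"older norm. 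The resulting implicit $R$-dependence (through factors $R^{\mu - n/p}$ and $R^{-n}$) is absorbed into the constant $C(p,\mu,C_V,C_P)$ appearing in the statement, where $R$ is held fixed throughout.
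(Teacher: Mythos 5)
Your proposal is correct and is essentially the same argument as the paper's: the paper runs a single doubly-infinite chain of dyadic balls from $x$ to $y$ (following Shanmugalingam), while you run two one-sided chains meeting at the common ball $B_{2d}(x)$, but both rest on the identical ingredients — telescoping of averages, the volume-doubling bound $|B_r|/|B_{r/2}|\le 2^nC_V^2$, the Poincar\'e hypothesis, and geometric summability from $\mu>n/p$. Your handling of the sup bound via $|u_{B_R(x)}|\le C_VR^{-n}\|u\|_{L^1(M)}$ is a cosmetic variant of the paper's integration of the H\"older estimate over a unit ball, and your remark about the implicit $R$-dependence of the constant applies equally to the paper's own proof.
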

\begin{proof}
Here we more or less follow the proof given by N. Shanmugalingam in \cite{Shanmugalingam-2000} where our goal is to adapt the argument to our setting and keep careful track of the constants.
Since $u \in C^1(M)$ we know that every $x \in M$ is a Lebesgue point of $u$ and hence
\begin{align}
\lim_{r \rightarrow 0^+} \dashint_{B_r(x)} |u(x)-u(y)| dV_g(y) = 0.
\end{align}
Since  
\begin{align}
\left| \dashint_{B_r(x)} u dV_g - u(x) \right| = \left| \dashint_{B_r(x)} u(y)-u(x) dV_g(y) \right|  \le  \dashint_{B_r(x)} |u(x)-u(y)| dV_g(y),
\end{align}
we see that
\begin{align}
\lim_{r \rightarrow 0^+} \dashint_{B_r(x)}u dV_g = u(x).
\end{align}
Now, for $x,y \in M$ such that $d_g(x,y)<R$, we define nested families of balls $\left\{B_i \right\}_{-\infty}^{\infty}$ where
\begin{align}
B_0=B_{2d_g(x,y)}(x) , \quad B_i=B_{2^{1-i}d_g(x,y)}(x), \quad B_{-i}=B_{2^{1-i}d_g(x,y)}(y), \quad i \in \N.
\end{align}
Now if we let $u_{B_i}=\dashint_{B_i} u dV_g$ then we notice that
\begin{align}
|u(x)-u(y)|&=\lim_{r \rightarrow 0^+} \left| \dashint_{B_r(x)}u dV_g-\dashint_{B(y,r)}u dV_g\right|
\\&=\lim_{i \rightarrow \infty} \left| \dashint_{B_{2^{1-i}d_g(x,y)}(x)}u dV_g-\dashint_{B_{2^{1-i}d_g(x,y)}(y)}u dV_g\right|
\\&=\left| \sum_{i=0}^{\infty}\left(u_{B_{i+1}}-u_{B_{i}}\right) -\sum_{i=0}^{\infty}\left(u_{B_{-i-1}}-u_{B_{-i}}\right)\right|
\\&= \left|\sum_{-\infty}^{\infty}\left(u_{B_{i+1}}-u_{B_{i}}\right) \right| \le \sum_{-\infty}^{\infty}|u_{B_{i+1}}-u_{B_{i}}|.\label{LastLineLebesgueSum}
\end{align}
Rewriting the terms on the right hand side of \eqref{LastLineLebesgueSum}
\begin{align}
|u_{B_{i+1}}-u_{B_i}|& = \left| \dashint_{B_{i+1}}u dV_g-\dashint_{B_i}u dV_g  \right|
\\&= \left|\dashint_{B_{i+1}} u - u_{B_i} dV_g  \right|
\\&\le \dashint_{B_{i+1}} |u - u_{B_i}| dV_g \le \frac{|B_i|}{|B_{i+1}|} \dashint_{B_{i}} |u - u_{B_i}| dV_g.
\end{align}
On the other hand, by using the volume growth assumption, the ratios of volumes of balls satisfy
\begin{align}
 \frac{|B_i|}{|B_{i+1}|} & \le \frac{(C_V 2^{1-i}d_g(x,y))^n}{(C_V^{-1}2^{-i}d_g(x,y))^n}\le C_V^{2n} 2^n    \quad i \ge 0,
\\  \frac{|B_i|}{|B_{i+1}|} & \le \frac{(C_V 2^{2+i}d_g(x,y))^n}{(C_V^{-n}2^{1+i}d_g(x,y))^n}\le C_V^{2n} 2^{-n} \le C_V^{2n} 2^n    \quad i < 0.
\end{align}
Combining the above inequalities yields
\begin{align}
    |u_{B_{i+1}}-u_{B_i}|& \le C_V^{2n} 2^n  \dashint_{B_{i}} |u - u_{B_i}| dV_g.
\end{align}

Next, we apply the Poincar{\'e} inequality to find
\begin{align}
|u_{B_{i+1}}-u_{B_i}|&\le (2C_V)^{2n} C_P \Diam(B_i)^{\mu}\left( \dashint_{B_i} |\nabla u|^p dV_g \right)^{\frac{1}{p}}
\\&\le \frac{(2C_V)^{2n}C_P\Diam(B_i)^{\mu}}{|B_i|^{\frac{1}{p}}}\left( \int_{B_i} |\nabla u|^p dV_g \right)^{\frac{1}{p}}
\\&\le \frac{(2C_V)^{2n}2^{\mu}C_P2^{\mu(1-|i|)}d_g(x,y)^{\mu}}{(C_V 2^{1-|i|}d_g(x,y))^{\frac{n}{p}}}\left( \int_{B_i} |\nabla u|^p dV_g \right)^{\frac{1}{p}}
\\&\le (2C_V)^{2n}2^{\mu}C_PC_V^{-n/p}2^{\frac{p\mu-n}{p}(1-|i|)}d_g(x,y)^{\frac{p\mu-n}{p}}\left( \int_{B_i} |\nabla u|^p dV_g \right)^{\frac{1}{p}}.
\end{align}
Putting everything together,
\begin{align}
|u(x)-u(y)|&\le \sum_{-\infty}^{\infty}(2C_V)^{2n}2^{\mu}C_PC_V^{-n/p}2^{\frac{p\mu-n}{p}(1-|i|)}d_g(x,y)^{\frac{p\mu-n}{p}}\left( \int_{B_i} |\nabla u|^p dV_g \right)^{\frac{1}{p}}
\\&\le (2C_V)^{2n}2^{\mu}C_PC_V^{-n/p} d_g(x,y)^{\frac{p\mu-n}{p}}\left( \int_{M} |\nabla u|^p dV_g \right)^{\frac{1}{p}} \sum_{-\infty}^{\infty}2^{\frac{p\mu-n}{p}(1-|i|)}.
\end{align}
Then since $p\mu > n$ we find that
\begin{align}
\sum_{-\infty}^{\infty}2^{\frac{p\mu-n}{p}(1-|i|)} &= 2^{\frac{p\mu-n}{p}+1}+2 \sum_{i=1}^{\infty}2^{\frac{p\mu-n}{p}(1-i)}
\\&= 2^{\frac{p\mu-n}{p}+1}+2 \sum_{i=0}^{\infty}\left(2^{\frac{n-p\mu}{p}}\right)^i=2^{\frac{p\mu-n}{p}+1}+\frac{2}{1-2^{\frac{n-p\mu}{p}}},
\end{align}
and hence 
\begin{align}
|u(x)-u(y)|&\le C(p,\mu, C_V,C_P) d_g(x,y)^{\frac{p\mu-n}{p}}\left( \int_{M} |\nabla u|^p dV_g \right)^{\frac{1}{p}}.\label{HolderQuotientBound}
\end{align}

Now, let $y \in B_1(x)$, and integrate both sides of $|u(x)| \le |u(y)|+|u(x)-u(y)|$ to obtain
\begin{align}
   \int_{B_1(x)} |u(x)|dV_g(y) \le \int_{B_1(x)}|u(y)|dV_g(y)+\int_{B_1(x)}|u(x)-u(y)|dV_g(y),
\end{align}
or, by a slight rearrangement,
\begin{align}
    |u(x)| \le \dashint_{B_1(x)}|u|dV_g+\dashint_{B_1(x)}|u(x)-u(y)|dV_g(y).
\end{align}
Apply \eqref{HolderQuotientBound}, \eqref{LowerVolumeBound}, and the fact that $d_g(x,y) \le 1$, to find that
\begin{align}
    |u(x)| &\le C_V\int_{B_1(x)}|u|dV_g
    \\&\quad+C(p,\mu,C_V,C_P)\dashint_{B_1(x)} d_g(x,y)^{\frac{p\mu-n}{p}}\left( \int_{M} |\nabla u|^p dV_g \right)^{\frac{1}{p}}dV_g(y)
    \\&\le C_V\int_{M}|u|dV_g+C(p,\mu, C_V,C_P)\left( \int_{M} |\nabla u|^p dV_g \right)^{\frac{1}{p}},
\end{align}
which, together with \eqref{HolderQuotientBound}, implies the desired H\"{o}lder bound.
\end{proof}
\subsection{Application}
Fix $b,\Lambda, \kappa,\bar{m}>0$ and $\tau>\tfrac12$. Using the above work, we can now show that members of the family $\mathcal{M}(b,\tau,\bar{m},\Lambda,\kappa)$ have well behaved Sobolev spaces. We begin by demonstrating that metric balls are well behaved.
\begin{prop}\label{prop:upper-lower-volume-bounds}
     Given $R>0$ there exists a constants $C(\Lambda)$ and $C(\kappa,R)$ such that for any member $(M,g)$ in $\mathcal{M}(b,\tau,\bar{m},\Lambda,\kappa)$ we have, for all $x\in M$ and all $r\leq R$, the following volume bounds
     \begin{equation}
         C(\Lambda)r^{3}\leq |B_r(x)|\leq C(\kappa,R)r^3.
     \end{equation}
\end{prop}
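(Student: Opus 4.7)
My plan is to prove the two inequalities separately. The upper bound will follow immediately from the integral Ricci volume comparison recorded in Corollary \ref{volume-growth-above}, while the lower bound will come from an ODE argument on $|B_s(x)|$ driven by the hypothesized Neumann isoperimetric constant.

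For the upper bound I would apply Corollary \ref{volume-growth-above} with $\lambda = 0$, $n = 3$, and $p = 3 > n/2$. The only preliminary step is to observe that, since the smallest eigenvalue $h(x)$ of the Ricci endomorphism satisfies $|h(x)| \leq |\mathrm{Rc}_g(x)|$, the quantity $k(0, 3) = \int_M \max(0, -h)^3 \, dV_g$ is bounded above by $\|\mathrm{Rc}_g\|_{L^3}^3 \leq \kappa^3$. Plugging into Corollary \ref{volume-growth-above} then yields
\begin{equation*}
|B_r(x)| \leq \bigl(1 + C(3,3,0,R)\, \kappa^{1/2}\bigr)^{6} \cdot \tfrac{4\pi}{3}\, r^3
\end{equation*}
for all $r \leq R$, giving the required constant $C(\kappa, R)$.

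For the lower bound, my plan is to apply the hypothesis $IN_{3/2}(B_r(x)) \geq \Lambda$ to the family of concentric geodesic spheres inside $B_r(x)$ and integrate the resulting differential inequality. Setting $V(s) := |B_s(x)|$, the coarea formula gives $V'(s) = |\partial B_s(x)|$ for a.e.~$s \in (0, r)$; for such $s$, the decomposition $B_r(x) = B_s(x) \cup \partial B_s(x) \cup (B_r(x) \setminus \overline{B_s(x)})$ is admissible in the infimum defining $IN_{3/2}(B_r(x))$, so
\begin{equation*}
V'(s) \geq \Lambda \, \min\bigl(V(s),\, V(r) - V(s)\bigr)^{2/3}.
\end{equation*}
Next I would split at the half-volume radius: by the intermediate value theorem there exists $s^* \in (0, r)$ with $V(s^*) = V(r)/2$. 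On $(0, s^*]$ the minimum above equals $V(s)$, so integrating $V^{-2/3} V' \geq \Lambda$ gives $V(r)/2 = V(s^*) \geq (\Lambda s^*/3)^3$. On $[s^*, r)$ the minimum equals $W(s) := V(r) - V(s)$, and integrating $-W^{-2/3} W' \geq \Lambda$ from $s^*$ to $r$ (using $W(r) = 0$) gives $V(r)/2 \geq (\Lambda(r - s^*)/3)^3$. Combining these with $\max((s^*)^3, (r - s^*)^3) \geq (r/2)^3$ produces $V(r) \geq \Lambda^3 r^3 / 108$, so $C(\Lambda) = \Lambda^3/108$ works.

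The main technical wrinkle I anticipate is simply bookkeeping the almost-everywhere validity of the coarea identity $V'(s) = |\partial B_s(x)|$ together with the admissibility of $\partial B_s(x)$ as the separating hypersurface in the definition of $IN_{3/2}$. Both facts are standard consequences of the regularity of the distance function off the cut locus, and the same ingredients are already used in the proof of Lemma \ref{DirichletIsoToLowerVolumeGrowth}.
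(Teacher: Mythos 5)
Your proof is correct, and while the upper bound coincides with the paper's (both feed the $L^3$ Ricci bound into Corollary \ref{volume-growth-above}; you merely make the constant $(1+C\kappa^{1/2})^6\cdot\tfrac{4\pi}{3}$ explicit), your lower bound takes a genuinely different route. The paper never touches the Neumann constant of $B_r(x)$ directly: it uses asymptotic flatness to ensure that any compact domain $K$ is eventually contained in a ball of at least twice its volume, converts the hypothesis into the Dirichlet bound $ID_{3/2}(K)\geq\Lambda$ via Lemma \ref{NeumanToDirichletIsoperimetricEst}, and then integrates the clean Dirichlet ODE of Lemma \ref{DirichletIsoToLowerVolumeGrowth}. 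You instead test $IN_{3/2}(B_r(x))\geq\Lambda$ against the concentric spheres $\partial B_s(x)$ and absorb the resulting $\min(V(s),V(r)-V(s))^{2/3}$ by splitting the integration at the half-volume radius $s^*$; the arithmetic ($V(r)/2\geq(\Lambda s^*/3)^3$ and $V(r)/2\geq(\Lambda(r-s^*)/3)^3$, with $\max(s^*,r-s^*)\geq r/2$) checks out and yields $C(\Lambda)=\Lambda^3/108$. What each approach buys: the paper's reduction reuses lemmas already in place and produces the cleaner constant $\Lambda^3$, but it genuinely relies on the unbounded volume growth of an asymptotically flat end to satisfy the half-volume hypothesis of Lemma \ref{NeumanToDirichletIsoperimetricEst}; your argument makes no use of asymptotic flatness at all, so it transfers verbatim to compact members of $\mathcal{N}(\Lambda,V,\kappa)$, where the paper has to rerun a separate small-ball version of the Neumann-to-Dirichlet reduction in Proposition \ref{prop:Petersen-harmonic-norm-bound}. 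The measure-theoretic caveats you flag (a.e.\ validity of $V'(s)=|\partial B_s(x)|$ and admissibility of the geodesic sphere as the separating hypersurface) are exactly the regularity already taken for granted in the proof of Lemma \ref{DirichletIsoToLowerVolumeGrowth}, so no new gap is introduced.
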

\begin{proof}
    We observe that the upper bound on $|B_r(x)|$ follows directly from the control we have on $Rm$ applied to Corollary \ref{volume-growth-above}.
    
    Let us now concentrate on the lower bound on volume growth. From our assumption on asymptotic flatness, it follows that for any $x$ in $M$ we have
    \begin{equation}
        \lim_{r\rightarrow\infty}|B_r(x)|=\infty,
    \end{equation}
    and any compact subset is eventually contained in $B_r(x)$ for some $r$ large enough. Thus, we may apply Proposition \ref{NeumanToDirichletIsoperimetricEst} together with our assumption on the Neumann isoperimetric constant of metric balls to conclude that any compact domain in $M$ has Dirichlet isoperimetric constant uniformly bounded away from zero, depending on $\Lambda>0$. At this point, we may apply Lemma \ref{DirichletIsoToLowerVolumeGrowth} to get the desired result.
\end{proof}

We can now state the form of Morrey's inequality which is valid uniformly for the family $\mathcal{M}(b,\tau,\bar{m},\Lambda,\kappa)$. It is obtained by gathering together all the information we established thus far. 
\begin{thm}\label{thm:Morrey's-inequality-balls}
    For all $(M,g)$ in $\mathcal{M}(b,\tau,\bar{m},\Lambda,\kappa)$, points $x_{0}$ in $M$, and functions $f\in W^{1,p}(M,g)$ with $p>3$ and $s>0$, we have 
    \begin{equation}
        \|f\|_{C^{0,\frac{p-3}{p}}_{R}\left(B_{s}(x_{0})\right)}\leq C(b,\tau,\Lambda,\kappa,R,p)\left(\|f\|_{L^{1}\left(B_{s+R}(x_{0})\right)}+\|\nabla f\|_{L^{p}\left(B_{s+R}(x_{0})\right)}\right).
    \end{equation}
\end{thm}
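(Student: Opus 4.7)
The plan is to stack the three tools already in place: the Poincaré inequality of Lemma \ref{Poincare-from-uniform-SN}, the volume bounds of Proposition \ref{prop:upper-lower-volume-bounds}, and the abstract Morrey inequality of Theorem \ref{SobolevToMorrey}, and then to localize to a ball via a cutoff argument.

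\textbf{Verifying the hypotheses of Theorem \ref{SobolevToMorrey}.} With $n=3$ the family uses $\alpha=\tfrac{3}{2}=\tfrac{n}{n-1}$. The Neumann isoperimetric hypothesis $IN_{3/2}(B_r(x))\ge\Lambda$, together with the equivalence $SN_{3/2}\asymp IN_{3/2}$ recalled from \cite{li_2012}, furnishes $SN_{3/2}(B_r(x))\ge c\Lambda$ for every metric ball. Since $p>3>\tfrac{3}{2}=\tfrac{n}{2}$, Lemma \ref{Poincare-from-uniform-SN} then yields
\[
\dashint_{B_r(x)}|f-f_{B_r(x)}|\,dV_{g}\le C(\Lambda)|B_r(x)|^{1/3}\left(\dashint_{B_r(x)}|\nabla f|^{p}dV_{g}\right)^{1/p},
\]
and the upper volume bound $|B_r(x)|\le C(\kappa,R)r^{3}$ from Proposition \ref{prop:upper-lower-volume-bounds} turns the $|B_r(x)|^{1/3}$ factor into a constant multiple of $\Diam(B_r(x))$. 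This is precisely the Poincaré hypothesis of Theorem \ref{SobolevToMorrey} with $\mu=1$, while the two-sided volume control $C_{V}$ is also supplied by Proposition \ref{prop:upper-lower-volume-bounds}.

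\textbf{Applying Theorem \ref{SobolevToMorrey} and localizing.} With $\mu=1$ and $p>3$ the condition $\mu>n/p$ is met, so the Hölder exponent is $\gamma=(p\mu-n)/p=(p-3)/p$, and Theorem \ref{SobolevToMorrey} delivers the global estimate
\[
\|u\|_{C_{R}^{0,\gamma}(M,g)}\le C(b,\tau,\Lambda,\kappa,R,p)\bigl(\|u\|_{L^{1}(M)}+\|\nabla u\|_{L^{p}(M)}\bigr)
\]
for every $u\in C^{1}(M)$. To obtain the local statement, I would pick a cutoff $\phi\in C_{c}^{\infty}(B_{s+R}(x_{0}))$ with $\phi\equiv 1$ on $B_{s}(x_{0})$, $0\le\phi\le 1$ and $|\nabla\phi|\le 2/R$, and apply the global inequality to $u=\phi f$. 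The left side dominates $\|f\|_{C_{R}^{0,\gamma}(B_{s}(x_{0}))}$ since $\phi f=f$ there, and the right side is controlled by
\[
\|f\|_{L^{1}(B_{s+R})}+\|\nabla f\|_{L^{p}(B_{s+R})}+\tfrac{2}{R}\|f\|_{L^{p}(B_{s+R})}.
\]

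\textbf{Main obstacle.} The principal technical step is to absorb the residual $\tfrac{2}{R}\|f\|_{L^{p}(B_{s+R})}$ into $\|f\|_{L^{1}}+\|\nabla f\|_{L^{p}}$ as required by the statement. My plan is to invoke the $L^{\infty}$ portion of Theorem \ref{SobolevToMorrey}, applied to a second cutoff $\tilde\phi$ supported on a slightly enlarged ball, to obtain $\|f\|_{L^{\infty}(B_{s+R})}\le C(\|f\|_{L^{1}(B_{s+R'})}+\|\nabla f\|_{L^{p}(B_{s+R'})})$ for some $R'\ge R$, and then bound $\|f\|_{L^{p}(B_{s+R})}\le|B_{s+R}|^{1/p}\|f\|_{L^{\infty}(B_{s+R})}$, where the volume factor is controlled by Proposition \ref{prop:upper-lower-volume-bounds}. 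Any slight enlargement of the outer radius can be reabsorbed into the constant by Proposition \ref{prop:equiv-Holder-norms}, and extension from $C^{1}$ to general $f\in W^{1,p}$ follows by standard density.
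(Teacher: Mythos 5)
Your verification of the hypotheses of Theorem \ref{SobolevToMorrey} (the $(1,p)$ Poincar\'e inequality with $\mu=1$ via Lemma \ref{Poincare-from-uniform-SN} and the two-sided volume bounds via Proposition \ref{prop:upper-lower-volume-bounds}) matches the paper exactly. The divergence, and the gap, is in how you localize. The paper does not use a cutoff at all: it observes that the proof of Theorem \ref{SobolevToMorrey} is a purely local chaining argument over balls of radius at most $2d_g(x,y)\le 2R$ centered near $x$ and $y$, so one may simply rerun that proof with $M$ replaced by $B_{s+R}(x_0)$; every integral on the right-hand side is then taken over $B_{s+R}(x_0)$ and no error terms appear.

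Your cutoff route, by contrast, produces the residual term $\tfrac{2}{R}\|f\|_{L^{p}(B_{s+R})}$, and your plan for absorbing it does not close. First, it is circular: the $L^\infty$ bound you invoke is itself obtained by applying Theorem \ref{SobolevToMorrey} to $\tilde\phi f$, whose right-hand side contains $\|f\nabla\tilde\phi\|_{L^p}\le \tfrac{2}{R'}\|f\|_{L^p(B_{s+R'})}$ — the same quantity you are trying to eliminate, now on a larger ball. Iterating only pushes the term outward; there is no smallness available to run an absorption lemma, since the coefficient in front of the $L^p$ norm is a fixed constant of order $1/R$ rather than something less than $1$. Second, even granting the $L^\infty$ bound, the factor $|B_{s+R}(x_0)|^{1/p}$ grows like $(s+R)^{3/p}$ (Proposition \ref{prop:upper-lower-volume-bounds} controls it only in terms of the radius), so your final constant would depend on $s$, whereas the theorem asserts a constant $C(b,\tau,\Lambda,\kappa,R,p)$ independent of $s$ — a point that matters in the applications, where $s$ is taken large. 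The fix is to abandon the cutoff and localize the chaining argument itself, as the paper does.
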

\begin{proof}
    The proof will be complete when we show that every member of the family $\mathcal{M}(b,\tau,\bar{m},\Lambda,\kappa)$ satisfies the conditions of Theorem \ref{SobolevToMorrey}, since then we can follow the proof of Theorem \ref{SobolevToMorrey} word for word with $M=B_{s+R}(x_0)$. Let us recall that we need to establish that metric balls around points in $B_{s}(x_{0})$ have upper and lower volume bounds, and uniformly satisfy a $(1,p)$ Poincar{\'e} inequality. 
    
    It follows from Proposition \ref{prop:upper-lower-volume-bounds} that $B_r(x)$ satisfies the required volume bounds for $r\leq R$ and $x$ in $B_{s}(x_{0})$. In addition, it follows from Lemma \ref{Poincare-from-uniform-SN} that $B_r(x)$ has the desired $(1,p)$ Poincar{\'e} inequality with $\mu=1$ for $r\leq R$ and $x\in B_{s}(x_0)$. Thus, we get the result by following the proof of Theorem \ref{SobolevToMorrey}.
\end{proof}

\section{Asymptotic Estimates For the Harmonic Functions} \label{sec-Asymptotic}

In this section, we show how ADM mass and the condition of uniform asymptotic flatness strongly control the behavior of asymptotically linear harmonic functions within the asymptotically flat region. Let us first recall some information about asymptotically linear harmonic functions.

For a complete asymptotically flat $3$-manifold $(M,g)$, let $x^{i}$ for $i=1,2,3$ be the components of the asymptotic coordinate chart. Then we call $u^i$ the $i^{\text{th}}$ asymptotically linear harmonic function if $u^{i}$ satisfies
\begin{equation}
    \begin{split}
        &\Delta_{g}u^{i}=0
        \\ 
        &u^{i}-x^{i}\in C^{2,\gamma}_{1-\tau}.
    \end{split}
\end{equation}
In particular, the gradients of $u^i$ are asymptotically constant in the sense that
\begin{equation}\label{e:harmonicasym1}
    |\nabla u^{i}-\partial_{x^{i}}|\in C^{1,\gamma}_{-\tau}.
\end{equation}
{\bf{Convention:}} Given asymptotically flat $(M,g)$ and $r>0$, we denote by $M_r$ the bounded region of $M$ lying inside the coordinate sphere of radius $r$, denoted $\mathcal{S}_r$. Secondly, if $\ell$ is a linear map from $\mathbb{R}^3\to\mathbb{R}$ with $||d\ell||=1$, we will call any solution to $\Delta u=0$ with $|u-\ell(x^1,x^2,x^3)|\in C^{2,\gamma}_{1-\tau}(M)$, for some $\gamma\in(0,1)$, an {\em{asymptotically linear harmonic function}}, noting that we always require the linear function to have unit norm. Also, throughout this section, we will fix parameters $b,\Lambda, \kappa,\bar{m}>0$ and $\tau>\tfrac12$. 

It was observed in \cite{KKL-2021} that the asymptotics \eqref{e:harmonicasym1} are uniform in the following sense.
\begin{prop}\label{asymptotic-gradient-convergence}
    Let $(M,g)$ be an orientable complete $3$-dimensional $(b,\tau,\bar{m})$ asymptotically flat manifold. Assume that $H_2(M;\mathbb{Z})$ contains no spherical classes and that $R_g\geq0$. For a sufficiently large $r_0$ (depending on $b$ and $\tau$), there is a constant $C(r_0,b,\tau,\bar m)$ so that any asymptotically linear functions $\{u^i\}_{i=1}^3$ satisfy
    \begin{equation}\label{e:weighteddecay}
        ||\nabla u^i-\partial_{x^i}||_{C^{1,\alpha}_{-\tau}(M\setminus M_{r_0})}\leq C(r_0,b,\tau,\bar m)
    \end{equation}
   and in particular,
    \begin{equation}\label{e:asymptotic-gradient}
        \left|g\left(\nabla u^{i},\nabla u^{j}\right)-\delta^{ij}\right|\leq C(r_0,b,\tau,\bar{m})|x|^{-\tau}
    \end{equation}
    holds on $M\setminus M_{r_0}$. Moreover, there is a particular collection of asymptotically linear functions $\{\hat{u}^i\}_{i=1}^3$, independent of $r_0$, which satisfy 
    \begin{equation}\label{e:c0control}
        \sup_{M_{r}}|\hat{u}^i|\leq C(r_0,b,\tau,\bar{m})r
    \end{equation}
    for all $r>r_0$.
\end{prop}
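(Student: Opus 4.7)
The plan is to derive the weighted estimate \eqref{e:weighteddecay} by writing $u^i$ as a perturbation of the asymptotic coordinate $x^i$ and invoking weighted Schauder theory for the Laplacian on the asymptotic end. Setting $w^i := u^i - x^i$, a direct computation in the asymptotic chart using condition (2) of Definition \ref{the-family-of-metrics} yields
\begin{equation*}
    \Delta_g w^i = -\Delta_g x^i = g^{jk}\Gamma^i_{jk},
\end{equation*}
whose $C^{0,\alpha}_{-\tau-1}$ norm on $M\setminus M_{r_0}$ is controlled by $C(b,\tau)$. Standard weighted elliptic theory on asymptotically Euclidean ends (Bartnik, Chaljub-Simon--Choquet-Bruhat) then gives a Schauder-type inequality of the form
\begin{equation*}
    \|w^i\|_{C^{2,\alpha}_{1-\tau}(M\setminus M_{r_0})} \leq C(b,\tau)\Big(\|\Delta_g w^i\|_{C^{0,\alpha}_{-\tau-1}(M\setminus M_{r_0})} + \|w^i\|_{C^{2,\alpha}(A)}\Big),
\end{equation*}
where $A$ is a fixed annular neighborhood of $\partial M_{r_0}$. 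The first term is bounded by $C(b,\tau)$, so the task reduces to absorbing the second into $C(r_0,b,\tau,\bar m)$. Once \eqref{e:weighteddecay} is in hand, the consequence \eqref{e:asymptotic-gradient} follows by expanding $g(\nabla u^i,\nabla u^j)$ in coordinates and combining condition (2) of Definition \ref{the-family-of-metrics} with \eqref{e:weighteddecay}.

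To bound $w^i$ on the annulus $A$, I would invoke the mass inequality \eqref{ADM-formula}. Since $R_g \geq 0$, the inequality yields $\int_M |\nabla^2 u^i|^2/|\nabla u^i|\,dV_g \leq 16\pi \bar m$. Combined with the asymptotic normalization $|\nabla u^i| \to 1$ at infinity, elliptic regularity for $\Delta_g u^i = 0$, and the local Morrey-type theory developed in Section \ref{sec-Sobolev}, this extracts a pointwise bound on $\nabla u^i$ across $A$. A subsequent Schauder bootstrap produces uniform $C^{2,\alpha}$ control on $w^i$ in $A$ with constant depending only on $r_0,b,\tau,\bar m$, completing \eqref{e:weighteddecay}.

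For the normalized collection $\{\hat u^i\}$, I would use the observation that any two asymptotically linear harmonic functions sharing the same linear part at infinity differ by a harmonic function on $M$ whose growth is strictly sublinear; a Liouville-type argument via the maximum principle on the asymptotic end forces such a difference to be constant. This freedom allows us to pin down a unique $\hat u^i$, independent of $r_0$, by demanding that $\hat u^i$ vanish at a chosen reference point $p_\star \in M$. The sup bound \eqref{e:c0control} then follows by integrating along a geodesic from $p_\star$: indeed $|\hat u^i(p)| \leq d_g(p_\star,p)\sup_M|\nabla \hat u^i|$, and the gradient on the right is controlled globally by combining \eqref{e:weighteddecay} on $M\setminus M_{r_0}$ with the interior gradient bound used in the previous paragraph. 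Since the Riemannian distance is comparable to the coordinate norm in the asymptotic region, this gives $\sup_{M_r}|\hat u^i|\leq C(r_0,b,\tau,\bar m)r$. The principal obstacle throughout is ensuring the various elliptic constants depend only on the listed parameters; this is arranged by choosing $r_0$ large enough that the coefficients of $\Delta_g$ are uniformly close to their Euclidean counterparts in $M\setminus M_{r_0}$, so that the weighted theory applies with $C(b,\tau)$-controlled constants.
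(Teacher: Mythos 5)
Your rederivation of \eqref{e:weighteddecay} via weighted Schauder theory is a reasonable outline of what the paper simply imports from \cite{KKL-2021}*{Lemma 3.2}, and \eqref{e:asymptotic-gradient} does follow from it by expanding in coordinates. One caution there: you invoke ``the local Morrey-type theory developed in Section \ref{sec-Sobolev}'' to control the annulus term, but that machinery is built for the class $\mathcal{M}(b,\tau,\bar m,\Lambda,\kappa)$ and uses the isoperimetric and Ricci hypotheses, neither of which is assumed in this proposition. This is repairable only because your annulus $A$ sits in the asymptotic chart, where condition $(2)$ makes the metric uniformly Euclidean-comparable, so local $L^p$ elliptic estimates with $C(b,\tau,r_0)$ constants apply after the mass formula plus the Schoen--Yau Sobolev inequality give $\||\nabla u^i|\|_{L^3(M)}\leq C(b,\tau,\bar m)$; you should say this rather than appeal to Section \ref{sec-Sobolev}.

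The genuine gap is in your proof of \eqref{e:c0control}. You normalize by $\hat u^i(p_\star)=0$ and write $|\hat u^i(p)|\leq d_g(p_\star,p)\sup_M|\nabla\hat u^i|$, claiming the gradient is ``controlled globally.'' Under the hypotheses of this proposition there is no pointwise gradient bound on the compact core $M_{r_0}$: the only global information is the integral bound $\int_M|\nabla^2 u^i|^2/|\nabla u^i|\leq 16\pi\bar m$ and the resulting $L^3$ bound on $|\nabla u^i|$, and without Ricci lower bounds or the isoperimetric hypothesis one cannot upgrade this to an $L^\infty$ bound in the interior (the paper only obtains such sup bounds in Corollary \ref{cor:C0-grad}, which requires the full class $\mathcal{M}(b,\tau,\bar m,\Lambda,\kappa)$). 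Moreover $d_g(p_\star,p)$ is not comparable to $r$ for $p$ in the core, since $\mathrm{diam}(M_{r_0})$ is uncontrolled here (Lemma \ref{AsymDiamEst} again needs $\Lambda$). The paper avoids both problems: it normalizes $\hat u^i$ to have zero average over the annulus $M_{r_0}\setminus M_1$, bounds $\sup_{M_{r_0}}|\hat u^i|$ by the Sobolev-plus-mass-formula argument of \cite{KKL-2021}*{Proposition 3.1}, and then reaches points of $M_r\setminus M_{r_0}$ by integrating \eqref{e:weighteddecay} along radial coordinate curves that never leave the asymptotic region. If you want to keep a pointwise normalization, you would at least need to place $p_\star$ in the asymptotic region, integrate only along radial curves there to control $\hat u^i$ on $\mathcal{S}_{r_0}$, and then invoke the maximum principle on the compact region $M_{r_0}$ to transfer that bound to the core; as written, your argument does not close.
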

\begin{proof}
    Only the final statement requires proof since the other statements are contained in \cite[Lemma 3.2]{KKL-2021}. Fix the choice of $\{\hat{u}^i\}_{i=1}^3$ by requiring that the average value of $\hat{u}^i$ over the annulus $M_{r_0}\setminus M_1$ vanishes. According to \cite[Proposition 3.1]{KKL-2021}, using a Sobolev inequality and the mass formula, we have $\sup_{M_{r_0}}|\hat{u}^i|< C_1(r_0,b,\tau,\bar{m})$. For a larger radius $r>r_0$, one can estimate $\hat{u}^i$ at a point $x\in (M_r\setminus M_{r_0})$ by integrating the gradient estimate \eqref{e:weighteddecay} along a radial curve connecting $x$ to $M_{r_0}$. The fundamental theorem of calculus then shows that $|\hat{u}^i(x)-C_1(r_0,b,\tau,\bar{m})|<C_2(r_0,b,\tau,\bar{m})r$ and the result follows.
\end{proof}

Next, we leverage the uniform weighted control \eqref{e:weighteddecay} to show that the difference between $g$ and the flat metric on $\mathbb{R}^3$ is small in the average sense within the asymptotic region.

\begin{prop}\label{p:asymptoticannulusdecay}
     Let $(M,g)$ be an oriented complete $3$-dimensional $(b,\tau,\bar{m})$ asymptotically flat manifold with mass $m$. Assume that $H_2(M;\mathbb{Z})$ contains no spherical classes and that $R_g\geq0$. Given a sufficiently large $r_0=r_0(b,\tau)>0$, $r>r_0$, and an $\varepsilon>0$, there is a $\delta=\delta(r_0,b,\tau,\varepsilon)>0$ so that the following holds: If $m(M,g)\leq\delta$, then
    \begin{equation}\label{e:asymptoticannulusdecay}
        \int_{M_{r}\setminus M_{r_0}}\left|\left<\nabla u^{i},\nabla u^{j}\right>-\delta^{ij}\right|dV_g<\varepsilon,
    \end{equation}
    for any $i,j\in\{1,2,3\}$ where $\{u^i\}_{i=1}^3$ are the harmonic coordinates of $(M,g)$.
\end{prop}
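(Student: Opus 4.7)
The plan is to argue by contradiction via a compactness and rigidity argument applied to the asymptotic end. Suppose the conclusion fails. Then there is a sequence $(M_k,g_k)$ of oriented complete $(b,\tau,\bar m)$-asymptotically flat $3$-manifolds with $R_{g_k}\ge 0$, no spherical classes in $H_2(M_k;\mathbb{Z})$, and $m(g_k)\to 0$, such that
\[
\int_{M_r^{(k)}\setminus M_{r_0}^{(k)}} \left|\langle \nabla u^i_k, \nabla u^j_k\rangle_{g_k}-\delta^{ij}\right|\, dV_{g_k} \ge \varepsilon
\]
for each $k$ and some fixed indices $i,j$, where $\{u^i_k\}$ are the normalized asymptotically linear harmonic functions from Proposition \ref{asymptotic-gradient-convergence}.

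Using the asymptotic coordinate charts, identify each end $M_k\setminus M_1^{(k)}$ with a common exterior domain $E=\mathbb{R}^3\setminus B_1$. The $(b,\tau,\bar m)$-asymptotic flatness condition provides uniform $C^2$ bounds with weighted decay $|D^{(l)}(g_k-\delta)|\le b|x|^{-\tau-l}$ for $l=0,1,2$, so by Arzel\`a–Ascoli, after passing to a subsequence, $g_k\to g_\infty$ in $C^{1,\alpha}_{\mathrm{loc}}(E)$. Proposition \ref{asymptotic-gradient-convergence} also supplies uniform $C^{2,\alpha}_{1-\tau}$ control on $u^i_k-x^i$. Combining this with Schauder theory for the harmonic equation and the metric convergence, a further subsequence satisfies $u^i_k\to u^i_\infty$ in $C^{2,\beta}_{\mathrm{loc}}(E)$ for any $\beta<\alpha$; the limit $u^i_\infty$ is $g_\infty$-harmonic and inherits $|\nabla u^i_\infty-\partial_{x^i}|\le C|x|^{-\tau}$.

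Apply the mass formula \eqref{ADM-formula} to each $(M_k,g_k)$, discard the non-negative $R_{g_k}|\nabla u^i_k|$ term, and restrict to $E_{r_0}=M_k\setminus M_{r_0}^{(k)}$, on which $|\nabla u^i_k|_{g_k}\ge \tfrac{1}{2}$ by \eqref{e:weighteddecay} and sufficiently large $r_0$:
\[
\int_{E_{r_0}} |\nabla^2 u^i_k|_{g_k}^2\, dV_{g_k} \le 32\pi\, m(g_k)\to 0.
\]
Since $|\nabla u^i_\infty|_{g_\infty}$ is likewise bounded below on $E_{r_0}$ and $\nabla^2 u^i_k\to \nabla^2 u^i_\infty$ locally uniformly, Fatou's lemma yields $\nabla^2 u^i_\infty\equiv 0$ on $E_{r_0}$. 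Hence $\nabla u^i_\infty$ is $g_\infty$-parallel on the connected end, so $\langle \nabla u^i_\infty, \nabla u^j_\infty\rangle_{g_\infty}$ is constant there. Passing the pointwise estimate \eqref{e:asymptotic-gradient} to the limit pins this constant to $\delta^{ij}$.

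Consequently $\langle \nabla u^i_\infty, \nabla u^j_\infty\rangle_{g_\infty} - \delta^{ij}\equiv 0$ on $E_{r_0}$, and the local uniform convergence of $(g_k,u^i_k)$ on the fixed compact annulus $M_r\setminus M_{r_0}$ yields $\int|\langle \nabla u^i_k,\nabla u^j_k\rangle_{g_k}-\delta^{ij}|\, dV_{g_k}\to 0$, contradicting the assumed lower bound $\varepsilon$. The main technical obstacle will be ensuring that $C^{1,\alpha}$ metric convergence delivers enough regularity on the limiting harmonic function ($C^{2,\beta}_{\mathrm{loc}}$) to justify the pointwise Hessian conclusion via Fatou, and cleanly restricting the mass formula's integrand to the asymptotic region where the gradient is uniformly bounded from below.
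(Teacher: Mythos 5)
Your argument is correct in outline but takes a genuinely different route from the paper's. The paper's proof is direct and quantitative: it uses the mass formula \eqref{ADM-formula} together with the upper bound on $|\nabla u^i|$ from \eqref{e:weighteddecay} to get $\int_{M\setminus M_{r_0}}|\nabla^2 u^i|^2\,dV\le C(b,\tau)\,m$ (as you do), but then feeds $\nabla\langle\nabla u^i,\nabla u^j\rangle$ into the Schoen--Yau Sobolev inequality on the asymptotically flat end to obtain $\left(\int_{M\setminus M_{r_0}}(\langle\nabla u^i,\nabla u^j\rangle-\delta^{ij})^6\,dV\right)^{1/6}\le C(b,\tau)\,\sqrt{m}$, and finishes with H\"older's inequality and the coordinate volume bound $|M_r\setminus M_{r_0}|\le C(b,\tau)r^3$, yielding the explicit estimate $\int_{M_r\setminus M_{r_0}}|\langle\nabla u^i,\nabla u^j\rangle-\delta^{ij}|\,dV\le C(b,\tau)\,r^{5/2}\sqrt{m}$. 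Your compactness-and-rigidity scheme (Arzel\`a--Ascoli on the fixed exterior chart, vanishing of the limit Hessian, constancy of $\langle\nabla u^i_\infty,\nabla u^j_\infty\rangle$ pinned to $\delta^{ij}$ by \eqref{e:asymptotic-gradient}) proves the proposition as stated and is sound: the uniform weighted bounds of Proposition \ref{asymptotic-gradient-convergence} do give the $C^{1,\beta}_{\mathrm{loc}}$ subconvergence of $\nabla u^i_k$ you need, and uniform convergence on the fixed compact coordinate annulus (not Fatou) already forces $\nabla^2u^i_\infty\equiv0$. What you give up is the explicit dependence of $\delta$ on $\varepsilon$ and $r$, which the paper advertises as a feature of its positive mass stability results and which is the reason it argues directly here rather than by contradiction.

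One step needs repair: to pass from $\int\frac{|\nabla^2u^i_k|^2}{|\nabla u^i_k|}\,dV\le16\pi m_k$ to $\int|\nabla^2u^i_k|^2\,dV\le Cm_k$ on $M_k\setminus M_{r_0}$ you need an \emph{upper} bound $|\nabla u^i_k|\le 2$, since $\frac{|\nabla^2u|^2}{|\nabla u|}\ge\frac{|\nabla^2u|^2}{\sup|\nabla u|}$; the lower bound $|\nabla u^i_k|\ge\tfrac12$ that you invoke gives an inequality in the useless direction. Both bounds follow from \eqref{e:weighteddecay} for $r_0$ large, so the conclusion (and even your constant $32\pi$) survives, but the justification as written is backwards. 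Finally, note that in both your argument and the paper's, $\delta$ genuinely depends on $r$ as well as on $r_0,b,\tau,\varepsilon$ (the region of integration grows with $r$); this is harmless for the way the proposition is applied later.
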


\begin{proof}
    Let $r_0>0$ be the radius given in Proposition \ref{asymptotic-gradient-convergence}. By \eqref{e:weighteddecay}, 
    \begin{equation}
        |\nabla^{2}u^{i}|+|\nabla u^{i}|\leq C_1(b,\tau)
    \end{equation}
    holds on $M_{r_0}$. In light of the $|\nabla u^{i}|$'s boundedness, the mass formula \eqref{ADM-formula} implies
    \begin{equation}
        \int_{M\setminus M_{r_{0}}}|\nabla^{2}u^i|^2dV\leq C_2(b,\tau)m.
    \end{equation}

    The next step is to apply a Sobolev inequality on asymptotically flat manifolds due to Schoen-Yau \cite[Lemma 3.1]{SchoenYauPMT},
    \begin{align}\label{e:schoenyausobo}
        \int_{M\setminus M_{r_0}}(\langle \nabla u^i,\nabla u^j\rangle-\delta^{ij})^6dV&\leq C_3(b,\tau)\left(\int_{M\setminus M_{r_0}}|\nabla\langle \nabla u^i,\nabla u^j\rangle|^2 \right)^3\\
        {}&\leq C_4(b,\tau)\sup_{M\setminus M_{r_0}}|\nabla u^i|^2\left(\int_{M\setminus  M_{r_0}}|\nabla^2u^j|^2dV\right)^3
        \\{}
        &+C_4(b,\tau)\sup_{M\setminus M_{r_0}}|\nabla u^j|^2\left(\int_{M\setminus M_{r_0}}|\nabla^2u^i|^2dV\right)^3\\
        {}&\leq C_5(b,\tau)m^3.
    \end{align}
    Also notice that the uniform asymptotic flatness condition implies that $|M_{r}\setminus M_{r_{0}}|\leq C_{6}(b,\tau)r^{3}$. Combining this observation with H{\"o}lder's inequality and \eqref{e:schoenyausobo},
    \begin{align}
        \int_{M_{r}\setminus M_{r_0}}|\langle \nabla u^i,\nabla u^j\rangle-\delta^{ij}|dV&\leq |M_{r}\setminus M_{r_0}|^{5/6}\left(\int_{M_{r_0}\setminus M_r}|\langle \nabla u^i,\nabla u^j\rangle-\delta^{ij}|^6dV\right)^{1/6}\\
        {}&\leq C_7(b,\tau)r^{5/2}\sqrt{m}
    \end{align}
    and the result follows.
    
\end{proof}

\begin{thm}\label{C3AsymptoticEstimate}
    Let $(M,g)$ be a $(b,\tau,\bar{m})$ asymptotically flat Riemannian manifold. If $u$ is a asymptotically linear harmonic function, then there is a $r_0(b,\tau)$ so that the following holds on $M \setminus M_{r_0}$
    \begin{align}
        |\nabla u|+|\nabla^2 u|+|\nabla^3 u|\le C(b,\tau).
    \end{align}
  
\end{thm}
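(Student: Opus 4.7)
The plan is to obtain the bounds on $|\nabla u|$ and $|\nabla^{2}u|$ directly from the hypotheses, and to handle the bound on $|\nabla^{3}u|$ via a rescaling argument combined with standard interior Schauder estimates for the harmonic equation. The first two bounds are essentially definitional, so the real content of the theorem lies in the third.

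In the asymptotic chart, the condition $u-x^{i}\in C^{2,\gamma}_{1-\tau}$ gives $|\partial u|=O(1)$ and $|\partial^{2}u|=O(|x|^{-1-\tau})$, while item $(2)$ of Definition \ref{the-family-of-metrics} gives $|\partial g|=O(|x|^{-1-\tau})$, $|\partial^{2}g|=O(|x|^{-2-\tau})$, and hence $|\Gamma|+|\partial\Gamma|=O(|x|^{-1-\tau})$. Using the identities $\nabla_{i}u=\partial_{i}u$ and $\nabla_{i}\nabla_{j}u=\partial_{i}\partial_{j}u-\Gamma^{k}_{ij}\partial_{k}u$, we immediately obtain $|\nabla u|_{g}+|\nabla^{2}u|_{g}\le C(b,\tau)$ on $M\setminus M_{r_{0}}$ for $r_{0}(b,\tau)$ large enough.

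For the third derivative, the plan is to rescale. Fix $x_{0}$ with $R:=|x_{0}|$ large and set $\tilde{x}=x/R$, $\tilde{u}(\tilde{x})=u(R\tilde{x})/R$, and $\tilde{g}_{ij}(\tilde{x})=g_{ij}(R\tilde{x})$. A direct computation, using that $\tilde{\Gamma}^{k}_{ij}(\tilde{x})=R\,\Gamma^{k}_{ij}(R\tilde{x})$, shows that $\Delta_{\tilde{g}}\tilde{u}(\tilde{x})=R\cdot(\Delta_{g}u)(R\tilde{x})=0$, so $\tilde{u}$ is $\tilde{g}$-harmonic on the annulus $A:=\{1/2\le|\tilde{x}|\le 2\}$. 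The asymptotic flatness gives $\|\tilde{g}-\delta\|_{C^{2}(A)}\le C(b,\tau)R^{-\tau}$, so for $R$ sufficiently large $\Delta_{\tilde{g}}$ is uniformly elliptic on $A$ with $C^{1,\alpha}$ coefficients bounded in $b,\tau$, and $\|\tilde{u}\|_{C^{0}(A)}\le C(b,\tau)$ by combining Proposition \ref{asymptotic-gradient-convergence} with the rescaling. Interior Schauder estimates applied to $\Delta_{\tilde{g}}\tilde{u}=0$ on $A$ then yield
\begin{equation*}
\|\tilde{u}\|_{C^{3,\alpha}(A')}\le C(b,\tau)\|\tilde{u}\|_{C^{0}(A)}\le C(b,\tau)
\end{equation*}
on a smaller annulus $A'\subset A$ containing $\tilde{x}_{0}$. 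Scaling back, the chain rule gives $|\partial^{3}u|(x_{0})=R^{-2}|\partial^{3}\tilde{u}|(\tilde{x}_{0})\le C(b,\tau)R^{-2}$, so $|\partial^{3}u|$ is uniformly bounded (and indeed decays) on $M\setminus M_{r_{0}}$. Converting to covariant derivatives via the schematic identity $\nabla^{3}u=\partial^{3}u+\Gamma\cdot\partial^{2}u+\partial\Gamma\cdot\partial u+\Gamma^{2}\cdot\partial u$ and using the already-established bounds on $\Gamma$ and $\partial\Gamma$ yields $|\nabla^{3}u|_{g}\le C(b,\tau)$.

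The main technical obstacle is not conceptual but bookkeeping: one must carefully verify that the Schauder constant and the initial $C^{0}$ bound on $\tilde{u}$ depend only on $b$ and $\tau$, which amounts to choosing $r_{0}$ large enough so that the rescaled metric on $A$ is close enough to Euclidean in $C^{2}$ for the Schauder constants to stabilize, and using the integrated gradient estimate from Proposition \ref{asymptotic-gradient-convergence} to translate a weighted-Hölder bound on $u-x^{i}$ into the required sup-norm control of $\tilde{u}$ after rescaling.
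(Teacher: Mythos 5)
Your proposal is correct in outline but takes a genuinely different route from the paper for the only nontrivial part, the third-derivative bound. The paper works at the original scale: it applies the B\"ochner identity $\Delta\nabla u=\mathrm{Rc}(\nabla u,\cdot)$, treats this as an elliptic system for $\nabla u$ with bounded right-hand side, and invokes interior $L^p$ estimates to get $\|\partial_i u\|_{W^{2,p}}\le C(b,\tau)$ for all $p$, before converting back to covariant derivatives exactly as you do. Your rescaling-to-a-unit-annulus argument with interior Schauder estimates is cleaner in one respect: it produces the natural decay $|\partial^3u|=O(|x|^{-2})$ rather than a mere bound, and it avoids integrating over the unbounded end. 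Two remarks. First, the $C^0$ bound on $\tilde u$ is only available after normalizing the additive constant in $u$ (e.g.\ replacing $u$ by one of the $\hat u^i$ of Proposition \ref{asymptotic-gradient-convergence}); this is harmless since the conclusion involves only derivatives, and the paper makes the same reduction elsewhere, but you should say it. Second, and more substantively, your claim that the operator has ``$C^{1,\alpha}$ coefficients bounded in $b,\tau$'' is not fully justified for the drift term: $\tilde g^{ij}\tilde\Gamma^k_{ij}$ lies in $C^{1,\alpha}$ with quantitative bounds only if $\partial^2\tilde g$ is H\"older-controlled, whereas the hypothesis $|D^{(k)}(g-\delta)|\le b|x|^{-\tau-k}$, $k\le 2$, gives only an $L^\infty$ bound on $\partial^2 g$. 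With that regularity, differentiating the equation once and applying Schauder to $\partial_l\tilde u$ leaves an $L^\infty$ (not $C^{0,\alpha}$) source term, so the standard argument yields $W^{3,p}$ rather than $C^{3,\alpha}$ control, i.e.\ $L^p$ rather than pointwise bounds on $\partial^3\tilde u$. This is the same threshold at which the paper's own $W^{2,p}$-for-all-$p$ step stalls, so your argument is no weaker than the one in the text, but to close it cleanly you should either invoke decay of $D^{(3)}g$ or be explicit that only $L^p$ control of $\nabla^3u$ on annuli is being claimed.
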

\begin{proof}
   Throughout this proof we will work in $(M,g)$'s asymptotically flat coordinate chart and all partial derivatives will be in these coordinates. Using Proposition \ref{asymptotic-gradient-convergence}, we know that we can choose an $r_0$ large enough so that the first and second derivatives satisfy $|\partial u| \le C_1(b,\tau)$ and $|\partial\partial u|\leq C_2(b,\tau)$ on $M \setminus M_{r_0}$. It suffices, therefore, to estimate the third derivatives of $u$ and relate the partial derivatives to covariant derivatives with respect to the connection induced by $g$. 
   Expanding the Hessian of $u$ in asymptotically flat coordinates,
   \begin{align}
       \nabla^2u=g^{lm}\left(\partial_l\partial_m-\Gamma_{lm}^k\partial_k \right)u. \label{HessianInCoords}
   \end{align}
   According to the uniform asymptotics, the terms in \eqref{HessianInCoords} can be bounded in $M_{r_0}$ to find $|\nabla^2 u| \le C_3(b,\tau)$.
   
   To estimate the third derivatives of $u$, we apply the B{\"{o}}chner formula to find
   \begin{align}
       \Delta \nabla u=g^{lm}\left(\partial_l\partial_m -\Gamma_{lm}^k \partial_k \right) \nabla u=Rc(\nabla u,\cdot)=:\mathbf{f},
   \end{align}
   which we consider as a PDE for $\nabla u$. Again using the uniform asymptotics, $|\mathbf{f}|, |g^{lm}|,|\Gamma_{lm}^k| \le C_4(b,\tau)$ on $M \setminus M_{r_0}$. Applying elliptic  $L^p$ estimates \cite[Theorem 9.11]{GT}, for every $p >1$
   \begin{align}
       \|\partial_i u\|_{W^{2,p}(M\setminus M_{r_0},\delta)} \le C_5(b,\tau), \quad 1 \le i \le 3. \label{W2pCoordEst}
   \end{align}
   Since \eqref{W2pCoordEst} holds for all $p>1$, the third partial derivatives $|\partial\partial\nabla u| \le C_6(b,\tau)$
    and again since
   \begin{align}
       \nabla^2\nabla_i u=g^{lm}\left(\partial_l\partial_m-\Gamma_{lm}^k\partial_k \right)\nabla_i u, \label{HessianInCoords2}
   \end{align}
  we find $|\nabla^3 u| \le C_8(b,\tau)$, as claimed.
\end{proof}


    
    

We conclude this section by showing that there is a strong relationship between the regions $M_r$ and metric balls $B(x,r)$ for manifolds in the class $\mathcal{M}(b,\tau,\bar{m},\Lambda,\kappa)$. This information is important since the regions $M_r$ have well behaved boundary, but no inherent geometric significance, and the balls $B(x,r)$ are geometrically important, but could have wild boundaries. 
Before accomplishing this, we show that a uniform volume bound on the region within a given coordinate sphere.

\begin{lem}\label{DirichletToVolumeBound}
Let $(M,g)$ belong to $\mathcal{M}\left(b,\tau,\bar{m},\Lambda,\kappa\right)$. Then, there exists a constant $K=K(r_0,b,\tau,\Lambda)$ such that, for any $r>r_0$, we have $|M_{r}|\leq K r^{3}.$
\end{lem}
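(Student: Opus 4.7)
The plan is to apply the Dirichlet isoperimetric inequality, using $M_r$ itself as a competitor inside a sufficiently large metric ball, and to combine this with a direct area estimate for the coordinate sphere $\mathcal{S}_r = \partial M_r$ coming from asymptotic flatness.

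First I would bound the area of $\partial M_r$. The asymptotic flatness condition in Definition \ref{the-family-of-metrics} gives $|g_{ij}-\delta_{ij}| \le b|x|^{-\tau}$ on $\mathcal{S}_r$, so the induced area form on $\mathcal{S}_r$ is pointwise comparable to the Euclidean area form on a round sphere of radius $r$. For $r > r_0$ (chosen so that the perturbation is, say, bounded by $1/2$), this yields
\begin{equation*}
    |\partial M_r|_g \le C_1(r_0, b, \tau)\, r^2.
\end{equation*}

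Next I would produce a uniform lower bound on the Dirichlet isoperimetric constant of a metric ball containing $M_r$. Fix any basepoint $x_0 \in M$. Since $M_r$ is compact and the volumes of metric balls tend to infinity under asymptotic flatness, I can select radii $R' > R > 0$ so that $M_r \subset B_R(x_0)$ and $|B_R(x_0)|_g \le \tfrac{1}{2}|B_{R'}(x_0)|_g$. Condition $(4)$ of Definition \ref{the-family-of-metrics} then gives $IN_{3/2}(B_{R'}(x_0)) \ge \Lambda$, and Lemma \ref{NeumanToDirichletIsoperimetricEst} upgrades this to $ID_{3/2}(B_R(x_0)) \ge \Lambda$.

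Finally, since $M_r \subset B_R(x_0)$ with $\partial M_r \cap \partial B_R(x_0) = \emptyset$, I may use $M_r$ as a valid competitor for $ID_{3/2}(B_R(x_0))$. Combining this with the area bound on $\partial M_r$ gives
\begin{equation*}
    \Lambda \le \frac{|\partial M_r|_g}{|M_r|_g^{2/3}} \le \frac{C_1(r_0, b, \tau)\, r^2}{|M_r|_g^{2/3}},
\end{equation*}
which rearranges to $|M_r|_g \le K\, r^3$ with $K = \Lambda^{-3/2} C_1(r_0, b, \tau)^{3/2}$. I do not foresee a real obstacle: although the auxiliary radii $R, R'$ depend on $r$, the lower bound $\Lambda$ and the area estimate for $\partial M_r$ are both uniform in that choice, so the final volume bound is independent of $R, R'$.
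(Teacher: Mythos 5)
Your proposal is correct and follows essentially the same route as the paper: bound $|\mathcal{S}_r|\leq C(r_0,b,\tau)r^2$ via asymptotic flatness, then use $M_r$ (with boundary $\mathcal{S}_r$) as a competitor for a Dirichlet isoperimetric ratio bounded below by $\Lambda$ through Lemma \ref{NeumanToDirichletIsoperimetricEst}. The only difference is that you spell out the nested-ball construction needed to pass from the Neumann bound on balls to the Dirichlet bound, which the paper leaves implicit here.
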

\begin{proof}
    By the definition of $M_r$ and the $(b,\tau)$ asymptotic flatness of $M$ on $M_{r_0}$, we know that
    \begin{equation}
        |\mathcal{S}_{r}| \leq C_1(r_0,b,\tau)r^2.
    \end{equation}
     Further, $\mathcal{S}_r$ is a valid competitor for the isoperimetric ratio, and so it follows from the fact that $M$ is $(\Lambda,\frac{3}{2})$ Neumann-isoperimetrically bounded and Lemma \ref{NeumanToDirichletIsoperimetricEst} that
    \begin{align}
        |M_r| \le C_2(r_0,b,\tau,\Lambda)r^{3}.
    \end{align}
\end{proof}

The following lemma allows us to simultaneously take advantage of the good boundary behavior of the regions $M_{r}$ and the good geometric behavior of the metric balls $B(x,r)$.
\begin{figure}[htp]
    \centering
    \begin{subfigure}[b]{0.3\textwidth}
        \centering
        \includegraphics[width=4cm]{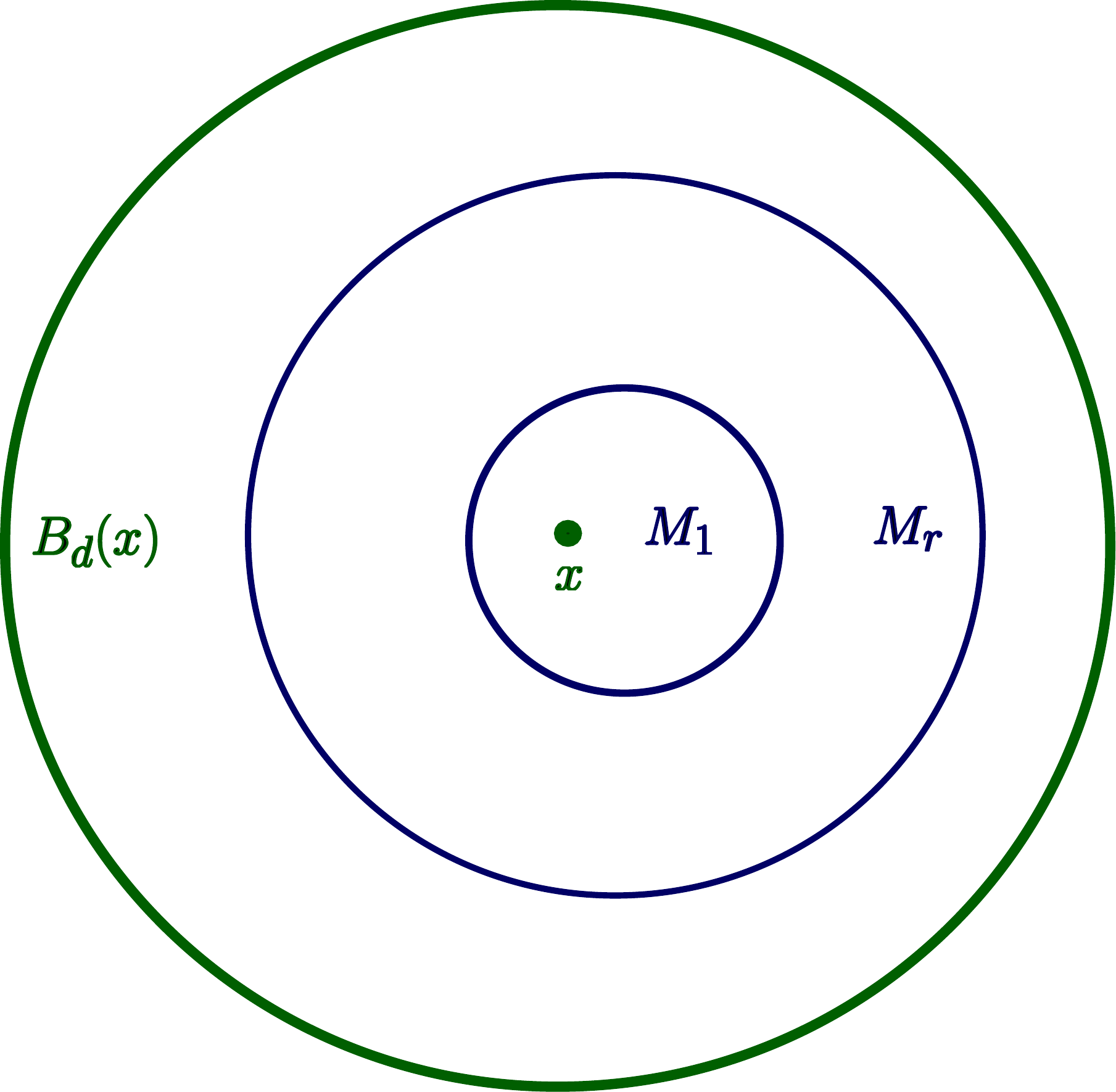}
        \caption{$M_{r}$ is contained in $B_{d}(x)$}
    \end{subfigure}
    \begin{subfigure}[b]{0.3\textwidth}
        \centering
        \includegraphics[width=4cm]{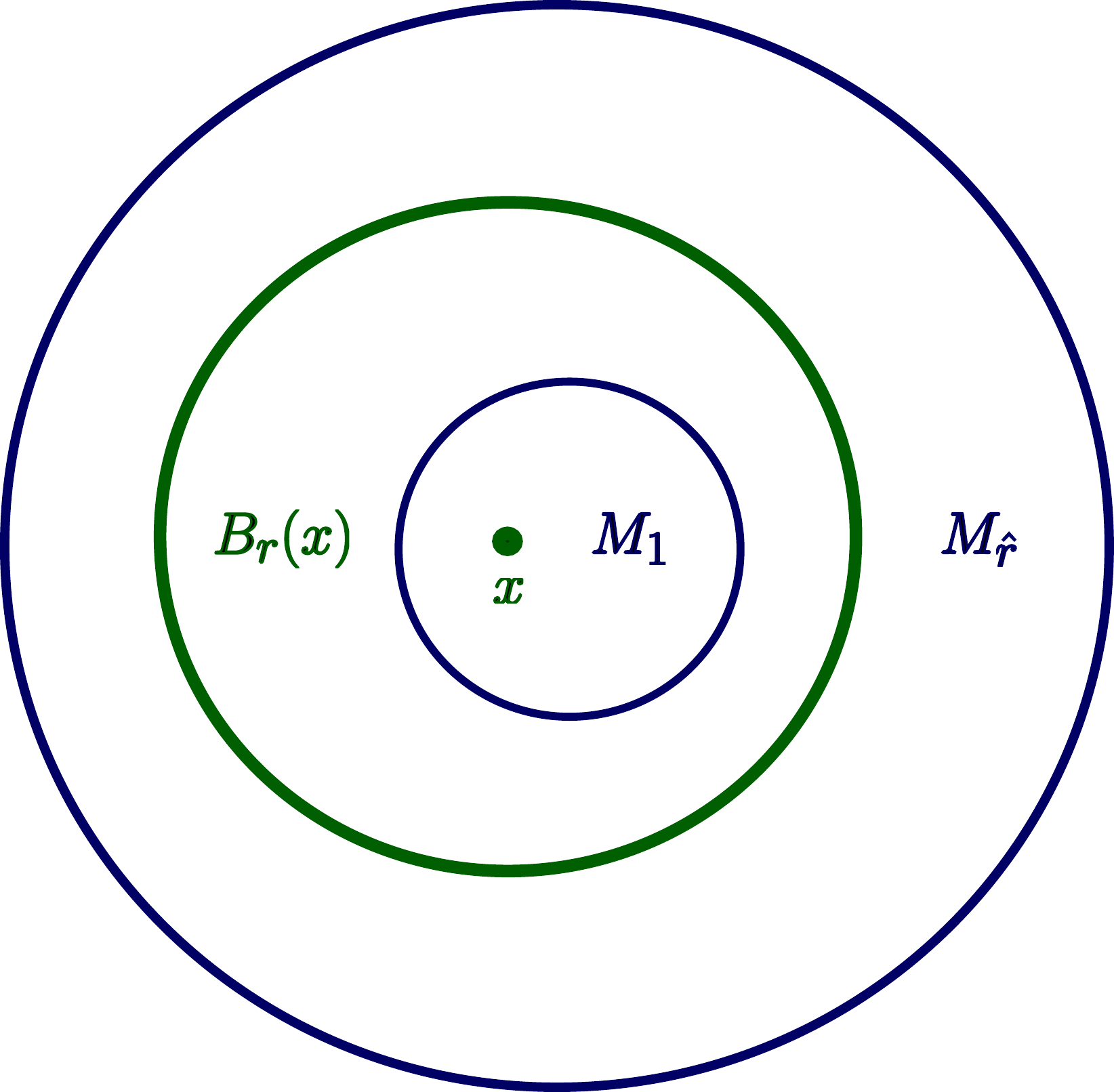}
        \caption{$B_r(x)$ is contained $M_{\hat{r}}$}
    \end{subfigure}
\end{figure}
\begin{lem}\label{AsymDiamEst}
Given $b,\tau,\Lambda,r$, there is $d=d(b,\tau,\Lambda,r)$ and $V=V(b,\tau,\Lambda,r)$ so that the following holds: If $(M,g)\in\mathcal{M}(b,\tau,\bar{m},\Lambda,\kappa)$, then
$\mathrm{diam}(M_r)\leq d$ and $|M_r|\leq V$. 

Moreover, there is an $\hat{r}=\hat{r}(b,\tau,r)$ so that the following holds: If $(M,g)\in\mathcal{M}(b,\tau,\bar{m},\Lambda,\kappa)$ and $x\in M_1$, then $B_r(x)\subset M_{\hat{r}}$.
\end{lem}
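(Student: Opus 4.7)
The plan is to handle the diameter/volume bounds for $M_r$ and the ball containment statement separately. Throughout, fix the asymptotic radius $r_0=r_0(b,\tau)$ from Proposition \ref{asymptotic-gradient-convergence}, chosen large enough that the perturbation $b|x|^{-\tau}$ is sufficiently small on $M\setminus M_{r_0}$; this makes $g$ bi-Lipschitz equivalent to the Euclidean metric on the asymptotic end with constants depending only on $b,\tau$. The volume bound $|M_r|\leq V(b,\tau,\Lambda,r)$ is immediate from Lemma \ref{DirichletToVolumeBound}. For the diameter, I split $M_r=M_{r_0}\cup(M_r\setminus M_{r_0})$ when $r\geq r_0$; the asymptotic shell $M_r\setminus M_{r_0}$ has $g$-diameter at most $C(b,\tau)r$, since any two of its points are joined by a path through the shell whose $g$-length is comparable to its Euclidean length, which is in turn controlled by $2r$. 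The remaining task is therefore to bound $\mathrm{diam}(M_{r_0})$ by a constant depending only on $b,\tau,\Lambda$.

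The bound on $\mathrm{diam}(M_{r_0})$ proceeds by a packing-plus-connectedness argument. First, the bi-Lipschitz comparison implies that for any $p\in M_{r_0}$, the metric ball $B_{1/2}(p)$ lies in $M_{r_0+C(b,\tau)/2}$, since a geodesic from $p$ exiting $M_{r_0}$ can extend Euclidean distance at most $C(b,\tau)/2$ past $\mathcal{S}_{r_0}$. Take a maximal $1$-separated subset $\{p_i\}_{i=1}^N\subset M_{r_0}$. The balls $B_{1/2}(p_i)$ are pairwise disjoint, each has volume at least a constant $C(\Lambda)>0$ by Proposition \ref{prop:upper-lower-volume-bounds}, and their union lies in $M_{r_0+C(b,\tau)/2}$, whose volume is controlled by Lemma \ref{DirichletToVolumeBound}. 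Comparing yields $N\leq N_0(b,\tau,\Lambda)$. Because the AF topology forces $M_{r_0}$ to be connected (a component of the compact core without boundary would be open and closed in $M$, contradicting connectedness of $M$), any continuous path between two points of $M_{r_0}$ can be covered by a chain of at most $N_0$ of the balls $B_1(p_i)$ with overlapping consecutive members, which yields the diameter bound $\mathrm{diam}(M_{r_0})\leq 2N_0(b,\tau,\Lambda)$.

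For the second assertion, let $x\in M_1$ and $q\in B_r(x)$, and consider a minimizing geodesic $\gamma:[0,L]\to M$ from $x$ to $q$. If $q\in M_{r_0}$ the conclusion is trivial for any $\hat r\geq r_0$. Otherwise, let $t_0$ denote the last time $\gamma$ meets $\overline{M_{r_0}}$, so that $\gamma(t_0)\in\mathcal{S}_{r_0}$ and the tail $\gamma|_{[t_0,L]}$ stays in the asymptotic end. The bi-Lipschitz comparison applied to this tail produces $|q|\leq r_0+C(b,\tau)d_g(\gamma(t_0),q)\leq r_0+C(b,\tau)r$, so $q\in M_{\hat r}$ for $\hat r:=r_0+C(b,\tau)r$, which depends only on $b,\tau,r$. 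The main obstacle throughout the argument is ensuring uniformity of the bi-Lipschitz constant across the class $\mathcal{M}$; this is resolved by the initial choice of $r_0$ depending only on $b,\tau$, so that the asymptotic perturbation $b|x|^{-\tau}$ is controllably small on $M\setminus M_{r_0}$ for every manifold in the class.
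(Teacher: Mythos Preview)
Your proof is correct, but takes a different and more laborious route to the diameter bound than the paper's. The paper argues directly: for any $x\in M_r$, the lower volume bound $|B_S(x)|\geq C(\Lambda)S^3$ forces $B_S(x)$ to exit $M_r$ once $S$ is chosen so that $C(\Lambda)S^3>|M_r|$; hence every point of $M_r$ lies within distance $S$ of the boundary sphere $\mathcal{S}_r$, which itself has $g$-diameter bounded by $C(b,\tau)r$ since it sits in the asymptotic region. This yields $\mathrm{diam}(M_r)\leq 2S+C(b,\tau)r$ in one stroke, with no need to split off $M_{r_0}$, run a packing argument, or verify connectedness of the compact core. Your packing-plus-chain argument is a perfectly valid alternative and is a standard technique, but it requires you to justify that the $B_1(p_i)$ cover $M_{r_0}$, that $M_{r_0}$ is connected (so a path exists to run the chain along), and that a chain of at most $N_0$ overlapping balls can be extracted from any such cover; these steps are all fine but add overhead the paper's approach avoids. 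For the second assertion, your geodesic-tail argument and the paper's ``any path from $\partial M_1$ to $\partial M_{\hat r}$ has length at least $r$'' are essentially the same observation, phrased slightly differently.
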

\begin{proof}
    From Lemma \ref{DirichletToVolumeBound} it follows that $|M_{r}|$ is bounded, say by $V=V(b,\tau,\kappa,r)$. Let $x$ and $y$ be any two points in $M_{r}$. By Lemma \ref{DirichletIsoToLowerVolumeGrowth}, we know that $|B_r(x)|$ and $|B_r(y)|$ have volume bounded below by $C(\Lambda) r^{3}$. Let $S=\left(\frac{V+1}{C(\Lambda)}\right)^{\frac{1}{n}}$. Then, by looking at the relevant volumes, we can conclude that
    \begin{equation}
        \begin{split}
            B_{S}(x)\bigcap \partial M_{r}\ne \emptyset,\qquad
            B_{S}(y)\bigcap \partial M_{r}\ne \emptyset.
        \end{split}
    \end{equation}
    Since $\partial M_{r}$ is in the asymptotically flat region, it follows that $\Diam(\partial M_{r})$ is uniformly bounded above, by $C=C(b,\tau,\kappa,r)$. Putting the above together, we see that we have
    \begin{equation}
        \Diam(M_{r})\leq 2S+C=:d,
    \end{equation}
    proving the first statement of Lemma \ref{AsymDiamEst}.
    
    For the second statement, let $x\in M_1$ and consider the metric ball $B_r(x)$. Using the uniform asymptotics, we can find a large enough $\hat{r}=\hat{r}(b,\tau,r)$ so that any path from $\partial M_1$ to $\partial M_{\hat{r}}$ has length least $r$. It follows that  $B_r(x)\subset M_{\hat{r}}$.
\end{proof}


\section{Integral Estimates For Harmonic Functions}\label{sec:int-est-harmonic-func} \label{sec-IntegralEstimates}

In this section we leave the asymptotically flat setting and work with general and fixed oriented complete $n$-dimensional Riemannian manifold $(M,g)$, and study harmonic functions $u:M\rightarrow \R$, which is to say, $\Delta u = 0$. We would like to develop integral estimates for $u$. Let $\Omega \subset M$ be bounded region with smooth boundary $\partial \Omega$ and outward pointing unit normal vector $\nu$. Our goal in this section is to obtain Sobolev type estimates for harmonic functions, which will ultimately lead to H\"{o}lder control.
We begin with an elementary observation which leads to $L^2$ control on the gradient of harmonic functions.
\begin{prop}\label{FirstOrderEstimate}
Let $(M,g)$ be an oriented complete $n$-dimensional Riemannian manifold, $u$ a harmonic function, and $\Omega \subset M$ a bounded domain with smooth boundary $\partial \Omega$ and outward pointing unit normal vector $\nu$. Then
\begin{align}
\int_{\Omega} | \nabla u|^2 dV_g
&=\int_{\partial \Omega}u\frac{\partial u}{\partial \nu} dA_g.
\end{align}
\end{prop}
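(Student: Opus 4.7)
The proof is a direct application of the divergence theorem combined with the harmonicity of $u$, so the plan is quite short. The key identity to exploit is the product rule for divergence applied to the vector field $u\nabla u$, namely
\begin{equation*}
    \mathrm{div}_g(u\nabla u) = g(\nabla u,\nabla u) + u\Delta_g u = |\nabla u|^2 + u\Delta_g u.
\end{equation*}

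First I would verify that $u\nabla u$ is a $C^1$ vector field on $\overline{\Omega}$, which follows from standard elliptic regularity for harmonic functions. Then, integrating the above identity over $\Omega$ and invoking the divergence theorem, the left-hand side becomes the boundary flux $\int_{\partial \Omega} g(u\nabla u,\nu)\, dA_g = \int_{\partial\Omega} u\,\tfrac{\partial u}{\partial \nu}\, dA_g$. On the right-hand side, the harmonic assumption $\Delta_g u = 0$ kills the second term, leaving $\int_\Omega |\nabla u|^2\, dV_g$. Equating the two expressions yields the claim.

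There is no substantive obstacle here; the only thing to be careful about is that the boundary $\partial\Omega$ is smooth (which is part of the hypothesis) so the divergence theorem applies in its classical form, and that $u$ and its first derivatives extend continuously to $\partial\Omega$, which is immediate since $\overline{\Omega}$ is compact and $u\in C^\infty(M)$ by interior elliptic regularity. No integral curvature or isoperimetric hypothesis is needed at this stage — this proposition is purely an identity that will be used as a starting point for the higher-order estimates developed later in the section.
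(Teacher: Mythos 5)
Your proof is correct and is essentially identical to the paper's: the paper simply says it integrates $-u\Delta u = 0$ by parts, which is exactly the divergence-theorem computation on $u\nabla u$ that you spell out. The extra remarks on smoothness of $u$ and of $\partial\Omega$ are fine but not needed beyond what the hypotheses already give.
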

\begin{proof}
We can integrate by parts $-u \Delta u=0$ to find
\begin{align}
    \int_{\Omega}|\nabla u|^2dV_g&= \int_{\partial \Omega}u\frac{\partial u}{\partial \nu}dA_g.
\end{align}
\end{proof}

Now we obtain an $L^2$ estimate on the hessian of harmonic functions.
\begin{thm}\label{SecondOrderEstimate}
Let $(M,g)$ be an oriented complete $n$-dimensional Riemannian manifold, $u$ a harmonic function, and $\Omega \subset M$ a bounded domain with smooth boundary $\partial \Omega$ and outward pointing unit normal vector $\nu$. Then
\begin{align}
\int_{\Omega} |\nabla^2 u|^2 dV_g
&\le\int_{\partial \Omega}|\nabla^2 u||\nabla u| dA_g +\left(\int_{\Omega} |Rc|^pdV_g\right)^{1/p} \left(\int_{\Omega}|\nabla u|^{\frac{2p}{p-1}}dV_g \right)^{\frac{p-1}{p}}.
\end{align}

\end{thm}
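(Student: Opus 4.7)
The approach is the standard Bochner technique for harmonic functions. The plan is to rewrite $|\nabla^2 u|^2$ using the Bochner identity, integrate by parts to convert a Laplacian term into a boundary integral, and apply H\"older's inequality to control the remaining Ricci term.

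First, I would recall that for a harmonic function $u$, the Bochner formula reads
\begin{equation*}
    \tfrac{1}{2}\Delta |\nabla u|^2 = |\nabla^2 u|^2 + \langle \nabla u, \nabla \Delta u\rangle + \mathrm{Rc}(\nabla u,\nabla u) = |\nabla^2 u|^2 + \mathrm{Rc}(\nabla u,\nabla u),
\end{equation*}
so that $|\nabla^2 u|^2 = \tfrac{1}{2}\Delta |\nabla u|^2 - \mathrm{Rc}(\nabla u,\nabla u)$. Integrating this identity over $\Omega$ gives
\begin{equation*}
    \int_{\Omega} |\nabla^2 u|^2\, dV_g = \tfrac{1}{2}\int_{\Omega} \Delta|\nabla u|^2\, dV_g - \int_{\Omega} \mathrm{Rc}(\nabla u,\nabla u)\, dV_g.
\end{equation*}

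Next, I would handle the two resulting terms separately. For the Laplacian term, the divergence theorem gives
\begin{equation*}
    \tfrac{1}{2}\int_{\Omega} \Delta |\nabla u|^2\, dV_g = \tfrac{1}{2}\int_{\partial\Omega} \nu(|\nabla u|^2)\, dA_g = \int_{\partial\Omega} \nabla^2 u(\nabla u,\nu)\, dA_g,
\end{equation*}
which is controlled pointwise on $\partial\Omega$ by $|\nabla^2 u|\,|\nabla u|$, producing exactly the claimed boundary term. For the Ricci term, I would simply estimate $|\mathrm{Rc}(\nabla u,\nabla u)|\le |\mathrm{Rc}|\,|\nabla u|^2$ and apply H\"older's inequality with conjugate exponents $p$ and $p/(p-1)$, yielding
\begin{equation*}
    \int_{\Omega} |\mathrm{Rc}|\,|\nabla u|^2\, dV_g \le \left(\int_{\Omega}|\mathrm{Rc}|^p\, dV_g\right)^{1/p}\left(\int_{\Omega} |\nabla u|^{\frac{2p}{p-1}}\, dV_g\right)^{\frac{p-1}{p}}.
\end{equation*}
Combining both estimates produces the stated inequality.

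There is no real obstacle here; the argument is essentially a bookkeeping exercise once one notices that the harmonic condition kills the middle term in Bochner. The only minor points to check are signs and the conversion $\tfrac{1}{2}\nu(|\nabla u|^2) = \nabla^2 u(\nabla u, \nu)$, which follows from the definition of the Hessian as a symmetric $(0,2)$-tensor acting on $\nabla u$ and the unit normal. The same strategy underlies the next order estimate presumably needed in Section \ref{sec-IntegralEstimates}, where commuting covariant derivatives will produce additional curvature terms to be absorbed through the integral Ricci bound.
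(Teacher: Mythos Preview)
Your proof is correct and arrives at exactly the same identity as the paper, namely $\int_\Omega |\nabla^2 u|^2 = \int_{\partial\Omega}\nabla^2 u(\nabla u,\nu)\,dA_g - \int_\Omega \mathrm{Rc}(\nabla u,\nabla u)\,dV_g$, followed by the same Cauchy--Schwarz and H\"older estimates. The only cosmetic difference is that you invoke the Bochner formula directly, whereas the paper re-derives it by hand starting from $0=\int_\Omega(\Delta u)^2$ and integrating by parts twice with a commutation of covariant derivatives; the content is identical.
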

\begin{proof}
Now we calculate
\begin{align}
0&=\int_{\Omega} (\Delta u)^2 dV_g
\\&= \int_{\Omega}g^{ij} g^{pq} \nabla_i\nabla_ju \nabla_p \nabla_qu dV_g
\\&= -\int_{\Omega} g^{ij} g^{pq} \nabla_ju \nabla_i\nabla_p \nabla_qu dV_g+\int_{\partial \Omega} g^{ij} g^{pq}\nabla_ju \nabla_p\nabla_qu \nu_i dA_g
\\&= -\int_{\Omega} g^{ij} g^{pq} \nabla_ju \nabla_p\nabla_i \nabla_qu -g^{ij} g^{pq} \nabla_juR_{ipqk} \nabla_k u dV_g+\int_{\partial \Omega} \frac{\partial u}{\partial \nu} \Delta u dA_g
\\&= \int_{\Omega} g^{ij} g^{pq} \nabla_p\nabla_ju \nabla_i \nabla_qu +Rc(\nabla u,\nabla u)dV_g - \int_{\partial \Omega}g^{ij} g^{pq} \nabla_j u \nabla _i \nabla _q u \nu_p dA_g 
\\&= \int_{\Omega} g^{ij} g^{pq} \nabla_p\nabla_ju \nabla_q \nabla_iu +Rc(\nabla u,\nabla u)dV_g - \int_{\partial \Omega}\nabla \nabla u(\nabla u, \nu) dA_g 
\\&= \int_{\Omega} |\nabla^2 u|^2 +Rc(\nabla u,\nabla u)dV_g - \int_{\partial \Omega}\nabla \nabla u(\nabla u, \nu) dA_g,
\end{align}
and so by rearranging we find
\begin{align}
\int_{\Omega} |\nabla^2 u|^2 dV_g=\int_{\partial \Omega}\nabla \nabla u(\nabla u, \nu) dA_g -\int_{\Omega} Rc(\nabla u,\nabla u)dV_g.
\end{align}
If we take the absolute value of both sides, use Cauchy-Schwarz, and then H\"{o}lder's inequality we find
\begin{align}
\int_{\Omega} |\nabla^2 u|^2 dV_g&\le\int_{\partial \Omega}|\nabla^2 u||\nabla u| dA_g +\int_{\Omega} |Rc||\nabla u|^2dV_g
\\&\le\int_{\partial \Omega}|\nabla^2 u||\nabla u| dA_g +\left(\int_{\Omega} |Rc|^pdV_g\right)^{1/p} \left(\int_{\Omega}|\nabla u|^{\frac{2p}{p-1}}dV_g \right)^{\frac{p-1}{p}}.
\end{align}
\end{proof}
Now we would like to estimate the integral of third derivatives of harmonic functions in a similar way. Below and throughout, given two tensors $T_1$ and $T_2$, we will use $T_1*T_2$ as a stand in for any linear function of $T_1$ and $T_2$.

\begin{thm}\label{FirstThirdOrderEstimate}
Let $(M,g)$ be an oriented complete $n$-dimensional Riemannian manifold, $u$ a harmonic function, and $\Omega \subset M$ a bounded domain with smooth boundary $\partial \Omega$ and outward pointing unit normal vector $\nu$. Then
\begin{align}
\begin{split}
\int_{\Omega} |\nabla^3 u|^2dV_g&=\int_{\Omega} 2Rc(\nabla u, \Delta \nabla u) -Rc^2(\nabla u, \nabla u)-Rc*\nabla\nabla u*\nabla\nabla u dV_g
\\&+\int_{\Omega}Rm*\nabla\nabla\nabla u* \nabla u  -Rm*\nabla\nabla u*\nabla\nabla udV_g
\\&+\int_{\partial \Omega} \nabla\nabla\nabla u(\nabla\nabla u,\nu)-\nabla \nabla u(\Delta \nabla u, \nu)dA_g,
\end{split}
\end{align}
where $Rm$ denotes the Riemann curvature tensor of $(M,g)$.
\end{thm}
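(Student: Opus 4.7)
The plan is to mimic the integration-by-parts strategy used for Theorem \ref{SecondOrderEstimate}, but starting directly from $\int_\Omega |\nabla^3 u|^2 dV_g$. Writing $|\nabla^3 u|^2 = (\nabla_i \nabla_j \nabla_k u)(\nabla^i \nabla^j \nabla^k u)$ and integrating by parts on the outermost derivative $\nabla_i$ produces the expected boundary contribution $\int_{\partial \Omega} \nabla^3 u(\nabla^2 u, \nu) dA_g$ together with the bulk remainder $-\int_\Omega \langle \nabla^2 u, \Delta(\nabla^2 u)\rangle dV_g$, where $\Delta$ denotes the rough Laplacian acting componentwise on the Hessian. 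This is a direct analogue of the first step of the proof of Theorem \ref{SecondOrderEstimate}.

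The heart of the argument is computing $\Delta(\nabla^2 u)$ via curvature commutators. Starting from $\Delta \nabla_j \nabla_k u = \nabla^a \nabla_a \nabla_j \nabla_k u$, the plan is to first commute $\nabla_a$ past $\nabla_j$ acting on the $1$-form $\nabla u$ (producing a term of the form $R*\nabla u$), then apply $\nabla^a$ and commute once more past $\nabla_j$, now acting on the Hessian $\nabla_a \nabla_k u$ as a $(0,2)$-tensor (producing the two contractions $Rc*\nabla^2 u$ and $Rm*\nabla^2 u$ from the two free indices of the Hessian). Using the Bochner identity $\nabla^a \nabla_a \nabla_k u = Rc(\nabla u)_k$, which is valid since $u$ is harmonic, to simplify the resulting divergence term yields the schematic decomposition
\begin{equation*}
    \Delta(\nabla^2 u)_{jk} = \nabla_j(\Delta \nabla u)_k + Rc*\nabla^2 u + Rm*\nabla^2 u + (\nabla Rm)*\nabla u.
\end{equation*}

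Substituting this into the bulk integral and handling each piece separately then gives the stated formula. The $Rc*\nabla^2 u$ and $Rm*\nabla^2 u$ terms, contracted against $\nabla^2 u$, immediately produce the $-Rc*\nabla^2 u*\nabla^2 u$ and $-Rm*\nabla^2 u*\nabla^2 u$ contributions. For the leading $-\int_\Omega \langle \nabla^2 u, \nabla(\Delta \nabla u)\rangle dV_g$ term, a second integration by parts, using $\nabla_j \nabla^j \nabla^k u = \Delta \nabla^k u = Rc(\nabla u)^k$, produces the boundary contribution $-\int_{\partial\Omega} \nabla^2 u(\Delta \nabla u, \nu) dA_g$ together with the bulk $\int_\Omega |\Delta \nabla u|^2 dV_g$, which can be rewritten via Bochner as $\int_\Omega [2 Rc(\nabla u, \Delta \nabla u) - Rc^2(\nabla u, \nabla u)] dV_g$ (the two expressions are equal as functionals). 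Finally, the $(\nabla Rm)*\nabla u$ contribution is integrated by parts once more to transfer the derivative off $Rm$, generating via the product rule the $+Rm*\nabla^3 u*\nabla u$ term and an additional $Rm*\nabla^2 u*\nabla^2 u$ contribution absorbed into the existing such term.

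The main obstacle will be the iterated commutator computation: one must carefully account for the two distinct curvature contractions that arise when $\nabla^a$ is commuted past $\nabla_j$ acting on a $(0,2)$-tensor, one trace-type giving $Rc$ and one genuine $Rm$, and then invoke Bochner in exactly the right place so that the leading term takes the pure $\nabla(\Delta \nabla u)$ form. A secondary technical point is managing the boundary contributions generated when integrating the $(\nabla Rm)*\nabla u$ term by parts; using the second Bianchi identity to re-express $\nabla^a R_{ajk}{}^m$ in terms of $\nabla Rc$, together with the symmetry of the Hessian $\nabla^2 u$, will be needed to ensure that all surviving boundary contributions reduce to the two stated boundary integrals.
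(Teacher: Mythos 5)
Your overall architecture is a genuine alternative to the paper's: you start from $\int_\Omega|\nabla^3u|^2$, integrate by parts once, and then invoke a Bochner--Weitzenb{\"o}ck-type formula for $\Delta(\nabla^2u)$, whereas the paper starts from the vanishing quantity $0=\int_\Omega|\nabla\Delta u|^2\,dV_g$ and repeatedly commutes indices \emph{before} differentiating, then integrates by parts moving whole derivatives between the two third-derivative factors. The paper's ordering is not cosmetic: it guarantees that no covariant derivative ever lands on the curvature tensor, so no $\nabla Rm$ term is ever created. Your route unavoidably creates one, via $\nabla^a\nabla_a\nabla_j\nabla_ku=\nabla_j(\Delta\nabla u)_k+Rc*\nabla^2u+Rm*\nabla^2u+(\nabla^aR_{ajk}{}^m)\nabla_mu$, and this is where the gap lies.

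The problem is your claim that, after integrating the $(\nabla Rm)*\nabla u$ contribution by parts, ``all surviving boundary contributions reduce to the two stated boundary integrals.'' They do not. Moving $\nabla^a$ off $R_{ajk}{}^m$ in $-\int_\Omega(\nabla^aR_{ajk}{}^m)\nabla_mu\,\nabla^j\nabla^ku\,dV_g$ produces, besides the absorbable bulk terms $Rm*\nabla^3u*\nabla u$ and $Rm*\nabla^2u*\nabla^2u$, the boundary integral $-\int_{\partial\Omega}R_{ajk}{}^m\nu^a\nabla_mu\,\nabla^j\nabla^ku\,dA_g$, which is a third boundary term of the schematic type $Rm*\nabla u*\nabla^2u*\nu$ absent from the statement. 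Neither the contracted second Bianchi identity ($\nabla^aR_{ajkm}=\nabla_kR_{jm}-\nabla_mR_{jk}$, which merely trades $\nabla Rm$ for $\nabla Rc$ and yields an analogous boundary term) nor the symmetry of the Hessian kills it: $R_{ajkm}$ is not antisymmetric in $(j,k)$, and already on a space form of curvature $K$ the term evaluates to $K\,\nabla^2u(\nabla u,\nu)$, which is generically nonzero. Declining to integrate by parts instead leaves a bulk $\nabla Rm*\nabla u*\nabla^2u$ term, which is equally absent from the statement (and would be unusable downstream, where only $L^p$ bounds on $Rm$, not $\nabla Rm$, are assumed). So your plan proves a correct but genuinely different identity; to obtain the stated one you should adopt the paper's order of operations --- commute the undifferentiated third derivatives inside $\int|\nabla\Delta u|^2=0$ first, and only then integrate by parts --- so that curvature only ever appears undifferentiated.
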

\begin{proof}
Now we notice that since $u$ is harmonic we can also deduce that $\nabla \Delta u=0$ and calculate
\begin{align}
0&= \int_{\Omega} |\nabla \Delta u|^2 dV_g
\\&= \int_{\Omega} g^{ij} g^{pq} g^{lm} \nabla_i \nabla_p \nabla_q u \nabla_j \nabla_l \nabla_mu dV_g
\\&= \int_{\Omega} g^{ij} g^{pq} g^{lm} \left(\nabla_p \nabla_i \nabla_q u -g^{rs}R_{ipqr}\nabla_s u\right) \left(\nabla_l \nabla_j \nabla_mu -g^{vw}R_{jlmv}\nabla_wu\right)dV_g
\\&= \int_{\Omega} g^{ij} g^{pq} g^{lm} \left(\nabla_p \nabla_i \nabla_q u\nabla_l \nabla_j \nabla_mu -2g^{rs}R_{ipqr}\nabla_s u\nabla_l \nabla_j \nabla_mu +g^{vw}R_{jlmv}\nabla_wug^{rs}R_{ipqr}\nabla_s u\right)dV_g
\\&= \int_{\Omega} g^{ij} g^{pq} g^{lm} \nabla_p \nabla_q \nabla_i u\nabla_l \nabla_m \nabla_ju -2g^{ij} g^{lm}g^{rs}R_{ir}\nabla_s u\nabla_l\nabla_m \nabla_ju +Rc^2(\nabla u, \nabla u)dV_g
\\&= \int_{\Omega} g^{ij} g^{pq} g^{lm} \nabla_p \nabla_q \nabla_i u\nabla_l \nabla_m \nabla_ju -2Rc(\nabla u, \Delta \nabla u) +Rc^2(\nabla u, \nabla u)dV_g
\\&= I+II+III.
\end{align}
For $III$ we note that
\begin{align}
Rc^2(\nabla u, \nabla u)=g^{ij}g^{pq}g^{rs} R_{ip}R_{jr} \nabla_q u \nabla_s u,
\end{align}
and for $II$ we note that
\begin{align}
Rc(\nabla u, \Delta \nabla u)=g^{ij} g^{lm}g^{rs}R_{ir}\nabla_s u\nabla_l\nabla_m \nabla_ju.
\end{align}
Now we continue the calculation by focusing on $I$
\begin{align}
I&= -\int_{\Omega} g^{ij} g^{pq} g^{lm}  \nabla_q \nabla_i u\nabla_p\nabla_l \nabla_m \nabla_ju dV_g+\int_{\partial \Omega} g^{ij} g^{pq} g^{lm}\nabla_q \nabla_i u\nabla_l \nabla_m \nabla_ju \nu_p dA_g
\\&= -\int_{\Omega} g^{ij} g^{pq} g^{lm}  \nabla_q \nabla_i u\nabla_p\nabla_l \nabla_m \nabla_ju dV_g+\int_{\partial \Omega} \nabla \nabla u(\Delta \nabla u, \nu) dA_g=A+B.
\end{align}
Now we continue with the first term
\begin{align}
A&= -\int_{\Omega} g^{ij} g^{pq} g^{lm}  \nabla_q \nabla_i u \left(\nabla_l\nabla_p \nabla_m \nabla_ju - g^{rs}R_{plmr}\nabla_s\nabla_ju - g^{rs}R_{pljr}\nabla_m\nabla_su \right)dV_g
\\&= \int_{\Omega} -g^{ij} g^{pq} g^{lm}  \nabla_q \nabla_i u \nabla_l\nabla_p \nabla_m \nabla_ju + Rc*\nabla\nabla u*\nabla\nabla u + Rm*\nabla\nabla u*\nabla\nabla u dV_g
\\&=C+D+E.
\end{align}
For this calculation we are using
\begin{align}
Rc*\nabla\nabla u*\nabla\nabla u&= g^{ij}g^{pq}g^{rs}R_{pr}\nabla_s\nabla_j u \nabla_q \nabla_iu,
\\Rm*\nabla\nabla u*\nabla\nabla u&=g^{ij}g^{lm}g^{rs}R_{pljr}\nabla_m\nabla_s u \nabla_q \nabla_iu.
\end{align}
Now we continue with the term $C$ to find
\begin{align}
C&=\int_{\Omega} g^{ij} g^{pq} g^{lm}  \nabla_l\nabla_q \nabla_i u \nabla_p \nabla_m \nabla_ju dV_g -\int_{\partial \Omega} g^{ij} g^{pq} g^{lm}\nabla_q \nabla_i u  \nabla_p \nabla_m \nabla_ju \nu_ldA_g
\\&=\int_{\Omega} g^{ij} g^{pq} g^{lm}  \nabla_l\nabla_q \nabla_i u \nabla_p \nabla_m \nabla_ju dV_g -\int_{\partial \Omega} g^{ij} g^{pq} g^{lm}\nabla_q \nabla_i u  \nabla_p \nabla_j \nabla_mu \nu_ldA_g
\\&=\int_{\Omega} g^{ij} g^{pq} g^{lm}  \nabla_l\nabla_q \nabla_i u \left(\nabla_m \nabla_p \nabla_ju -g^{rs}R_{pmjr}\nabla_su\right)dV_g -\int_{\partial \Omega} \nabla\nabla\nabla u(\nabla\nabla u,\nu)dA_g
\\&=\int_{\Omega} |\nabla^3 u|^2 -g^{ij} g^{pq} g^{lm} g^{rs}R_{pmjr}\nabla_l\nabla_q \nabla_i u\nabla_sudV_g -\int_{\partial \Omega} \nabla\nabla\nabla u(\nabla\nabla u,\nu)dA_g
\\&=\int_{\Omega} |\nabla^3 u|^2 -Rm*\nabla\nabla\nabla u* \nabla udV_g -\int_{\partial \Omega} \nabla\nabla\nabla u(\nabla\nabla u,\nu)dA_g.
\end{align}
In the calculation above we are using
\begin{align}
Rm*\nabla\nabla\nabla u* \nabla u=g^{ij} g^{pq} g^{lm} g^{rs}R_{pmjr}\nabla_l\nabla_q \nabla_i u\nabla_su.
\end{align}

Putting everything together we find
\begin{align}
\begin{split}
\int_{\Omega} |\nabla^3 u|^2dV_g&=\int_{\Omega} 2Rc(\nabla u, \Delta \nabla u) -Rc^2(\nabla u, \nabla u)-Rc*\nabla\nabla u*\nabla\nabla u dV_g
\\&+\int_{\Omega}Rm*\nabla\nabla\nabla u* \nabla u  -Rm*\nabla\nabla u*\nabla\nabla udV_g
\\&+\int_{\partial \Omega} \nabla\nabla\nabla u(\nabla\nabla u,\nu)-\nabla \nabla u(\Delta \nabla u, \nu)dA_g
\end{split}
\end{align}
\end{proof}

The terms on the right hand side of the estimate in Theorem \ref{FirstThirdOrderEstimate} involving a square of the Hessian of $u$ are concerning since we should only expect to have integral control on $|\nabla^2 u|^2$ and we will need to use H\"{o}lder's inequality. To this end we prove the following proposition which will be used to overcome this issue. The proof of the following proposition is inspired by the calculations in \cite{Strzelecki-06}. However, we only need the following weak conclusion, whereas the result obtained in \cite{Strzelecki-06} is much more refined. Luckily, this significantly simplifies the proof.

\begin{prop}\label{prop:HarmonicInterpolationIneq}
    Let $(M,g)$ be an oriented complete $n$-dimensional Riemannian manifold, $u$ a harmonic function, and $\Omega \subset M$ a bounded domain with smooth boundary $\partial \Omega$. If $f\in C^{3}(\Omega)$, then we have that
   \begin{equation}
       \|\nabla^2 f\|^{2}_{L^{3}}\le \max\left(1, \|\nabla^2 f\|^{2}_{L^{3}}\right)\leq \max\left(1,5\|\nabla f\|_{L^{6}}\|\nabla^3 f\|_{L^{2}}+\int_{\partial\Omega}\left|\nabla^2f\right|^2 |\nabla f|dA_g\right).
    \end{equation}
\end{prop}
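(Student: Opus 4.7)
Since the leftmost inequality is tautological, the real content is the rightmost one. The plan is to bound $A^3 = \int_\Omega |\nabla^2 f|^3\,dV_g$ by integration by parts in a way that pulls out exactly one factor of $A := \|\nabla^2 f\|_{L^3}$ on the right, which can then be divided away. The $\max(1,\cdot)$ on both sides of the statement is precisely what makes this cancellation legitimate: if $A < 1$ then $A^2 < 1 = \max(1, A^2)$ and the inequality is immediate, so we may assume $A \ge 1$ and at the end use $1/A \le 1$.

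To execute this, I would write $|\nabla^2 f|^3 = |\nabla^2 f|\,(\nabla^i\nabla^j f)(\nabla_i\nabla_j f)$, set $T^{ij} := |\nabla^2 f|\,\nabla^i\nabla^j f$, and apply Stokes' theorem to the divergence $\nabla_i(T^{ij}\nabla_j f)$, obtaining
\begin{equation*}
    \int_\Omega T^{ij}\nabla_i\nabla_j f\,dV_g = \int_{\partial\Omega} T^{ij}\nabla_j f\,\nu_i\,dA_g - \int_\Omega (\nabla_i T^{ij})\nabla_j f\,dV_g.
\end{equation*}
The boundary term is dominated by $\int_{\partial\Omega}|\nabla^2 f|^2|\nabla f|\,dA_g$ via pointwise Cauchy-Schwarz. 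For the interior, expand
\begin{equation*}
    \nabla_i T^{ij} = (\nabla_i|\nabla^2 f|)\,\nabla^i\nabla^j f + |\nabla^2 f|\,\nabla_i\nabla^i\nabla^j f,
\end{equation*}
and combine Kato's inequality $|\nabla|\nabla^2 f||\le |\nabla^3 f|$ with the pointwise Cauchy-Schwarz bound $|\Delta(\nabla f)|\le \sqrt{n}\,|\nabla^3 f|$ (applied to the trace of the Hessian of $\nabla f$) to get
\begin{equation*}
    \bigl|(\nabla_i T^{ij})\nabla_j f\bigr|\le (1+\sqrt{n})\,|\nabla^2 f|\,|\nabla^3 f|\,|\nabla f|.
\end{equation*}
H\"older's inequality with the conjugate triple $(3,2,6)$ then assembles the pieces into
\begin{equation*}
    A^3 \le \int_{\partial\Omega}|\nabla^2 f|^2|\nabla f|\,dA_g + (1+\sqrt{n})\,A\,\|\nabla^3 f\|_{L^2}\,\|\nabla f\|_{L^6}.
\end{equation*}

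Dividing by $A$ (permissible since we are in the case $A \ge 1$) and using $1/A \le 1$ on the boundary term produces the claimed inequality with constant $1+\sqrt{n}$. Since this paper works in dimension three, $1+\sqrt{3} < 5$, so the stated constant is ample. The main technical nuisance is that $|\nabla^2 f|$ need not be smooth where the Hessian vanishes, so Kato's inequality and the integration by parts must be justified through a regularisation; the standard fix is to replace $|\nabla^2 f|$ by $(\varepsilon + |\nabla^2 f|^2)^{1/2}$, carry out the computation, and send $\varepsilon \to 0$ by dominated convergence. Everything else is bookkeeping.
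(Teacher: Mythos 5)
Your argument is correct and follows essentially the same route as the paper's proof: integrate $|\nabla^2 f|^3$ by parts against $\nabla f$, control the interior term by $|\nabla f|\,|\nabla^2 f|\,|\nabla^3 f|$ via Kato's inequality, apply H\"older with exponents $(6,3,2)$, and divide by $\|\nabla^2 f\|_{L^3}$ using the $\max(1,\cdot)$ case split. Your constant $1+\sqrt{n}$ is in fact sharper than the paper's $5$, and your remark about regularising $|\nabla^2 f|$ where the Hessian vanishes is a legitimate technical point the paper passes over silently.
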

\begin{proof}
    The proof is the result of a nice integration by parts. In particular, we have that
    \begin{equation}
        \int_{\Omega}|\nabla^2 f|^{3}dV_g=-\int_{\Omega}\left<\nabla f,\nabla^{*}(|\nabla^2f|\nabla^{2}f)\right>dV_g+\int_{\partial\Omega}\left|\nabla^2f\right|\nabla^{2}f\left(\nabla f,\nu\right)dA_g
    \end{equation}
    where $\nu$ is the unit outward normal to $\partial \Omega$.
    Taking absolute values on the right hand side, the first term can be estimated as follows:
    \begin{align}
            \left|\left<\nabla f,\nabla^{*}(|\nabla^2f|\nabla^{2}f)\right>\right|\leq|\nabla f|\left(4|\nabla^2f||\nabla^3f|+|\nabla^2f||\nabla^3f|\right) = 5|\nabla f| |\nabla^2 f||\nabla ^3 f|.
    \end{align}
    Hence we have
 \begin{align}
        \int_{\Omega}|\nabla^2 f|^{3}dV_g\le5\int_{\Omega}|\nabla f| |\nabla^2 f||\nabla ^3 f|dV_g+\int_{\partial\Omega}\left|\nabla^2f\right|^2 |\nabla f|dA_g.
    \end{align}
    We may now use the generalized H{\"o}lder's inequality with exponents $6$, $3$, and $2$, respectively to get
    \begin{equation}
        \int_{\Omega}|\nabla^2 f|^{3}dV_g\leq 5\|\nabla f\|_{L^{6}}\|\nabla ^2 f\|_{L^{3}}\|\nabla^3 f\|_{L^2}+\int_{\partial\Omega}\left|\nabla^2f\right|^2 |\nabla f|dA_g.
    \end{equation}
    Dividing out both sides of the above by $\|\nabla ^2 f\|_{L^{3}}$ gives us that
    \begin{equation}
        \|\nabla^2 f\|^{2}_{L^{3}}\leq 5\|\nabla f\|_{L^{6}}\|\nabla^3 f\|_{L^{2}}+\|\nabla ^2 f\|_{L^{3}}^{-1}\int_{\partial\Omega}\left|\nabla^2f\right|^2 |\nabla f|dA_g.
    \end{equation}
    Now, if $\|\nabla ^2 f\|_{L^{3}}\ge 1$ then we find that
\begin{equation}
        \|\nabla^2 f\|^{2}_{L^{3}}\leq 5\|\nabla f\|_{L^{6}}\|\nabla^3 f\|_{L^{2}}+\int_{\partial\Omega}\left|\nabla^2f\right|^2 |\nabla f|dA_g,
    \end{equation}
    and if not, then $\|\nabla ^2 f\|_{L^{3}}<1$, which is a reasonable bound in its own right.
\end{proof}

By using the previous proposition we are now able to overcome the issue of a quadratic Hessian term in order to obtain an $L^2$ estimate on the third derivative of $u$ in terms of quantities we expect to be able to estimate.

\begin{thm}\label{Third-ThirdOrderEstimate}
Let $(M,g)$ be an oriented complete $n$-dimensional Riemannian manifold, $u$ a harmonic function, and $\Omega \subset M$ a bounded domain with smooth boundary $\partial \Omega$ and outward pointing unit normal vector $\nu$.
Then, for any $p >1$,
\begin{align}
\begin{split}
\int_{\Omega} |\nabla^3 u|^2dV_g &\le C\left(\int_{\Omega}|Rm|^{2p}+|Rc|^{2p}dV_g\right)^{1/p}\left(\int_{\Omega}|\nabla u|^{\frac{2p}{p-1}}dV_g\right)^{\frac{p-1}{p}}
\\&+C\max\left(2\left(\int_{ \Omega} |Rm|^3+|Rc|^3 dV_g\right)^{\frac{1}{3}},\left(\int_{ \Omega} |Rm|^3+|Rc|^3 dV_g\right)^{\frac{2}{3}}\|\nabla u\|_{L^6(\Omega)}^2\right)
\\&+C\left(\int_{ \Omega} |Rm|^3+|Rc|^3 dV_g\right)^{\frac{1}{3}}\max\left(1,\int_{\partial\Omega}|\nabla^2u|^{2}|\nabla u| dA_g\right)
\\&+\int_{\partial \Omega} \nabla\nabla\nabla u(\nabla\nabla u,\nu)-\nabla \nabla u(\Delta \nabla u, \nu)dA_g
\end{split}
\end{align}
\end{thm}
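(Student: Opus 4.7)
The plan is to begin with the identity of Theorem \ref{FirstThirdOrderEstimate} as the backbone, then estimate each interior term via H\"older's inequality, absorbing the worst-behaving Hessian quadratic terms through Proposition \ref{prop:HarmonicInterpolationIneq}, and finally moving a small $\|\nabla^3 u\|_{L^2}^2$ term to the left-hand side via Young's inequality.

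First, I would simplify the two Ricci terms. Since $u$ is harmonic, the B\"ochner formula reads $\Delta\nabla u = \mathrm{Rc}(\nabla u,\cdot)$, so $\mathrm{Rc}(\nabla u,\Delta\nabla u)=\mathrm{Rc}^2(\nabla u,\nabla u)$, which means the two terms $2\mathrm{Rc}(\nabla u,\Delta\nabla u)-\mathrm{Rc}^2(\nabla u,\nabla u)$ collapse to a single $\mathrm{Rc}^2(\nabla u,\nabla u)\le |\mathrm{Rc}|^2|\nabla u|^2$ contribution. H\"older's inequality with exponents $p$ and $p/(p-1)$ yields a bound by $\left(\int_\Omega |\mathrm{Rc}|^{2p}\right)^{1/p}\left(\int_\Omega |\nabla u|^{\frac{2p}{p-1}}\right)^{\frac{p-1}{p}}$, accounting for the first line of the claimed estimate (up to the $|\mathrm{Rm}|^{2p}$ piece contributed below).

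Next I treat the mixed term $\mathrm{Rm}*\nabla^3 u*\nabla u$. By Cauchy--Schwarz and the generalized H\"older inequality with exponents $2p, 2, 2p/(p-1)$, this is at most
\begin{equation*}
\Bigl(\int_\Omega |\mathrm{Rm}|^{2p}\Bigr)^{\frac{1}{2p}}\|\nabla^3u\|_{L^2}\Bigl(\int_\Omega|\nabla u|^{\frac{2p}{p-1}}\Bigr)^{\frac{p-1}{2p}}.
\end{equation*}
Applying Young's inequality $ab\le \tfrac{1}{2}a^2+\tfrac{1}{2}b^2$ to split off $\|\nabla^3 u\|_{L^2}^2$ lets me absorb half of it into the LHS, while the remainder feeds into the first line of the conclusion (this is precisely why both $|\mathrm{Rm}|^{2p}$ and $|\mathrm{Rc}|^{2p}$ appear). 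The remaining two interior terms $\mathrm{Rc}*\nabla^2 u*\nabla^2 u$ and $\mathrm{Rm}*\nabla^2 u*\nabla^2 u$ are the main obstacle, since we only control $\nabla^2 u$ in $L^2$ in general. H\"older with exponents $3, 3/2$ produces a factor $\|\nabla^2 u\|_{L^3}^2$, and for this I invoke Proposition \ref{prop:HarmonicInterpolationIneq} with $f=u$:
\begin{equation*}
\|\nabla^2 u\|_{L^3}^2\le \max\Bigl(1,\ 5\|\nabla u\|_{L^6}\|\nabla^3 u\|_{L^2}+\int_{\partial\Omega}|\nabla^2 u|^2|\nabla u|\,dA_g\Bigr).
\end{equation*}
Multiplying by $(\int_\Omega |\mathrm{Rm}|^3+|\mathrm{Rc}|^3)^{1/3}$, the term linear in $\|\nabla^3 u\|_{L^2}$ can again be handled by Young's inequality (absorbing into the LHS and producing the $(\int|\mathrm{Rm}|^3+|\mathrm{Rc}|^3)^{2/3}\|\nabla u\|_{L^6}^2$ summand), while the ``$1$'' branch of the $\max$ yields the summand $2(\int|\mathrm{Rm}|^3+|\mathrm{Rc}|^3)^{1/3}$ and the boundary portion yields the third line of the conclusion.

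Finally, the two boundary integrals from Theorem \ref{FirstThirdOrderEstimate} are carried through unchanged to give the last line. Collecting everything and absorbing the constants into a universal $C$ yields the stated inequality. The main subtlety is coordinating the absorption steps: each use of Young's inequality introduces a $\tfrac{1}{2}\|\nabla^3 u\|_{L^2}^2$ term on the right, so I must be careful to apply it only finitely many times with small enough coefficients (say $\varepsilon$ chosen so that the absorbed terms total at most $\tfrac{1}{2}\|\nabla^3 u\|_{L^2}^2$) before moving everything to the left.
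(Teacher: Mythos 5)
Your proposal is correct and follows essentially the same route as the paper's proof: both start from the identity of Theorem \ref{FirstThirdOrderEstimate}, estimate the interior terms with H\"{o}lder's inequality, control the Hessian-quadratic terms through Proposition \ref{prop:HarmonicInterpolationIneq}, and absorb the resulting $\|\nabla^3 u\|_{L^2}^2$ contributions via Young's inequality together with a case split on the maxima. The only deviation is your preliminary use of the B\"{o}chner identity $\Delta\nabla u=\mathrm{Rc}(\nabla u,\cdot)$ to collapse $2\mathrm{Rc}(\nabla u,\Delta\nabla u)-\mathrm{Rc}^2(\nabla u,\nabla u)$ into a single $\mathrm{Rc}^2(\nabla u,\nabla u)$ term; this spares one application of Young's inequality that the paper instead performs on the bound $2|\mathrm{Rc}|\,|\nabla u|\,|\nabla^3 u|$, and is a valid, slightly cleaner handling of that term.
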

\begin{proof}
If we apply Cauchy-Schwarz to the result from Theorem \ref{FirstThirdOrderEstimate}, we find
\begin{align}\label{eq:3rd_diff_estimate}
\begin{split}
\int_{\Omega} |\nabla^3 u|^2dV_g&\le\int_{\Omega} 2|Rc||\nabla u| |\nabla^3 u| +|Rc|^2|\nabla u|^2 +|Rc||\nabla^2 u|^2dV_g
\\&+\int_{\Omega}|Rm||\nabla^3 u|| \nabla u|+|Rm||\nabla^2 u|^2dV_g
\\&+\int_{\partial \Omega} \nabla\nabla\nabla u(\nabla\nabla u,\nu)-\nabla \nabla u(\Delta \nabla u, \nu)dA_g.
\end{split}
\end{align}
Applying H\"{o}lder's inequality with $p > 1$, we find
\begin{align}
   \int_{\Omega} |Rc|^2|\nabla u|^2dV_g \le \left(\int_{\Omega}|Rc|^{2p} dV_g\right)^{1/p} \left(\int_{\Omega}|\nabla u|^{\frac{2p}{p-1}} dV_g\right)^{\frac{p-1}{p}} .
\end{align}

We can deal with the remaining terms which are not quadratic in the Hessian of $u$ by applying Young's inequality with $\varepsilon$ and  H\"{o}lder's inequality with $p > 1$ to find
\begin{align}
\int_{\Omega}&2|Rc||\nabla^3 u||\nabla u| +|Rm||\nabla^3 u|| \nabla u|  dV_g
\\&\le
\int_{\Omega}2\left(C(\varepsilon)|Rc|^2|\nabla u|^2+\varepsilon|\nabla^3 u|^2\right) +(C(\varepsilon)|Rm|^2| \nabla u|  ^2+\varepsilon|\nabla^3 u|^2dV_g
\\&=
\int_{\Omega}C'(\varepsilon)(|Rc|^2|\nabla u|^2+|Rm|^2| \nabla u|  ^2)+5\varepsilon|\nabla^3 u|^2dV_g
\\&\le C'(\varepsilon) \left(\int_{\Omega}|Rc|^{2p}+|Rm|^{2p}dV_g\right)^{1/p}\left(\int_{\Omega}|\nabla u|^{\frac{2p}{p-1}}dV_g\right)^{\frac{p-1}{p}}+\int_{\Omega}5\varepsilon|\nabla^3 u|^2dV_g.
\end{align}

For the terms which are quadratic in the Hessian we first apply H\'{o}lder's inequality with $p =\frac{3}{2}$ to find
\begin{align}
\int_{\Omega}(|Rm|+|Rc|)|\nabla^2 u|^2dV_g
&\le \left(\int_{ \Omega} |Rm|^{3}+|Rc|^{3} dV_g\right)^{\frac{1}{3}} \left(\int_{ \Omega} |\nabla^2 u|^3dV_g\right)^{\frac{2}{3}}
\\&\le \left(\int_{ \Omega} |Rm|^{3}+|Rc|^{3} dV_g\right)^{\frac{1}{3}} \max\left(1,\|\nabla^2u\|_{L^3}^2\right),
\end{align}
and now the problem is that one cannot expect to have control on $\|\nabla^2u\|_{L^3}^2$ and hence we will control this term using $\|\nabla u\|_{L^6}$ and $\|\nabla^{3}u\|_{L^{2}}$ as follows. 

From Proposition \ref{prop:HarmonicInterpolationIneq}, we have
\begin{equation}
    \begin{split}
        \max\left(1,\|\nabla^2u\|_{L^3(\Omega)}^2\right)&\leq \max\left(1,5\|\nabla u\|_{L^6(\Omega)}\|\nabla^3 u\|_{L^2(\Omega)}+\int_{\partial\Omega}\left|\nabla^{2}u\right|^{2}|\nabla u|\right)dA_g,
    \end{split}
\end{equation}
and hence
\begin{align}
&\int_{\Omega}(|Rm|+|Rc|)|\nabla\nabla u|^2dV_g
\\&\le \left(\int_{ \Omega} |Rm|^{3}+|Rc|^{3} dV_g\right)^{\frac{1}{3}} \max\left(1,5\|\nabla u\|_{L^6}\|\nabla^3 u\|_{L^2}+\int_{\partial\Omega}\left|\nabla^{2}u\right|^{2}|\nabla u|dA_g\right)
\\&\le \left(\int_{ \Omega} |Rm|^{3}+|Rc|^{3} dV_g\right)^{\frac{1}{3}} \left(\max\left(1,5\|\nabla u\|_{L^6}\|\nabla^3 u\|_{L^2}\right)+\max\left(1,\int_{\partial\Omega}\left|\nabla^{2}u\right|^{2}|\nabla u|dA_g\right)\right).
\end{align}
Now, we may apply Young's inequality to the first term in the last expression to find
\begin{align}
 &\left(\int_{ \Omega} |Rm|^{3}+|Rc|^{3} dV_g\right)^{\frac{1}{3}}\max\left(1,5\|\nabla u\|_{L^6}\|\nabla^3 u\|_{L^2}\right) 
 \\&\le \max\left(\left(\int_{ \Omega} |Rm|^{3}+|Rc|^{3} dV_g\right)^{\frac{1}{3}},C\|\nabla u\|_{L^6}^2\left(\int_{ \Omega} |Rm|^{3}+|Rc|^{3} dV_g\right)^{\frac{2}{3}}+\frac{1}{2}\|\nabla^3 u\|_{L^2}^2\right),
\\&\le \max\left(\left(\int_{ \Omega} |Rm|^{3}+|Rc|^{3} dV_g\right)^{\frac{1}{3}},C\|\nabla u\|_{L^6}^2\left(\int_{ \Omega} |Rm|^{3}+|Rc|^{3} dV_g\right)^{\frac{2}{3}}\right)
\\&+\max\left(\left(\int_{ \Omega} |Rm|^{3}+|Rc|^{3} dV_g\right)^{\frac{1}{3}},\frac{1}{2}\|\nabla^3 u\|_{L^2}^2\right) .
\end{align}
Notice that if $\|\nabla^3 u\|_{L^2}^2 \le 2\left(\int_{ \Omega} |Rm|^{3}+|Rc|^{3} dV_g\right)^{\frac{1}{3}}$ then we are done and if not then we also have our desired bound by subtracting off $\tfrac{1}{2}\|\nabla^3u\|_{L^{2}(\Omega)}$ from both sides of Equation \eqref{eq:3rd_diff_estimate} and then keeping track of the sequence of inequalities which follow from the above equation.
\end{proof}

\section{Mass Estimates For Harmonic Functions}\label{sec-MassEstimates}


In this section our goal is to use the estimates of the previous section combined with the mass formula to obtain H\"{o}lder control on Harmonic functions and their inner products. One should notice that each of the estimates of the previous section involve boundary terms which involve control in the asymptotically flat region and so we require control on these terms. Since the boundaries of the $M_{r}$ are in the asymptotically flat region and well understood, in this section we will apply the results of Section \ref{sec:int-est-harmonic-func} to asymptotically linear harmonic functions over the regions $M_r$. 

{\bf{Convention:}}
In the following statement and throughout this section, $r_0$ denotes the radius provided by the conclusions of Propositions \ref{p:asymptoticannulusdecay} and \ref{C3AsymptoticEstimate}.
Also, throughout this section, we will fix parameters $b,\Lambda, \kappa,\bar{m}>0$ and $\tau>\tfrac12$. 

\begin{thm}\label{AsymFlatL^2Estimate}
Let $(M,g)$ be a member of the family $\mathcal{M}(b,\tau,\bar{m},\Lambda,\kappa)$ and
let $u$ be an asymptotically linear harmonic function. Then, for any $r > r_0$, we have
\begin{align}
\left(\int_{M_r}|\nabla u|^6 dV_g\right)^{1/6}  
&\le C(r,r_0,b,\tau,\kappa,\Lambda,\bar{m}) ,
\\\left(\int_{M_r}|\nabla^2 u|^2 dV_g\right)^{1/2}  
&\le C(r,r_0,b,\tau,\kappa,\Lambda,\bar{m}).
\end{align}

\end{thm}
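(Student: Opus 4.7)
The strategy is to leverage the ADM mass formula to obtain a weighted integral bound, convert this to an $L^3$ bound on $|\nabla u|$, and then close the argument using the Hessian identity of Section~\ref{sec-IntegralEstimates}. Since adding a constant to $u$ affects neither its derivatives nor the asymptotic linearity, we may assume $u$ coincides with one of the normalized asymptotically linear functions $\hat{u}^i$ provided by Proposition~\ref{asymptotic-gradient-convergence}, so that $|u|\leq C(r_0,b,\tau,\bar m)r$ on $M_r$. By Theorem~\ref{C3AsymptoticEstimate}, all derivatives of $u$ through third order are uniformly controlled on $\partial M_r$ for $r>r_0$, and $|\partial M_r|\leq C(b,\tau)r^2$, so every boundary contribution arising below is absorbed into $C(r,r_0,b,\tau,\bar m)$.

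First, Proposition~\ref{FirstOrderEstimate} applied with $\Omega=M_r$ expresses $\int_{M_r}|\nabla u|^2 dV_g$ as a boundary integral that is immediately controlled by the asymptotics, giving a preliminary bound $\int_{M_r}|\nabla u|^2\leq C$. Next, since $R_g\geq 0$ and $m(g)\leq\bar m$, the mass formula \eqref{ADM-formula} provides the free a priori bound
\begin{equation*}
\int_M \frac{|\nabla^2 u|^2}{|\nabla u|}\,dV_g \leq 16\pi\bar m.
\end{equation*}
Setting $v:=|\nabla u|^{1/2}$ and applying Kato's inequality $|\nabla|\nabla u||\leq|\nabla^2 u|$ pointwise gives $\int_M|\nabla v|^2\,dV_g\leq 4\pi\bar m$, where the locus $\{\nabla u=0\}$ is handled by a standard regularization $v_\varepsilon=(\varepsilon+|\nabla u|^2)^{1/4}$ and passage to the limit. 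Choosing a smooth cutoff $\phi$ supported in $M_{r+1}$ with $\phi\equiv 1$ on $M_r$ and $|\nabla\phi|\leq 2$, the function $\phi v$ has compact support in $M$ with $W^{1,2}$-norm controlled by the preceding estimates, so Theorem~\ref{DirichletToSobolev} (valid because $\Lambda$-Neumann isoperimetry passes to Dirichlet isoperimetry via Lemma~\ref{NeumanToDirichletIsoperimetricEst}) yields $\|v\|_{L^6(M_r)}\leq C$, which is to say $\int_{M_r}|\nabla u|^3\,dV_g\leq C(r,r_0,b,\tau,\Lambda,\bar m)$.

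With the $L^3$-gradient bound in hand, I would invoke Theorem~\ref{SecondOrderEstimate} on $\Omega=M_r$ with $p=3$: its boundary term is controlled by Theorem~\ref{C3AsymptoticEstimate}, while the interior term is at most $\kappa\cdot\|\nabla u\|_{L^3(M_r)}^2$ by assumption~(5) and the previous step. This produces the desired $L^2$-bound on $\nabla^2 u$ at scale $r$. To upgrade to the $L^6$-bound on $|\nabla u|$, I repeat the cutoff--Sobolev argument applied to $w:=|\nabla u|$: Kato combined with the Hessian bound (used at radius $r+1$) shows $w\in W^{1,2}(M_{r+1})$ with controlled norm, and Theorem~\ref{DirichletToSobolev} gives $\|w\|_{L^6(M_r)}\leq C$.

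The main obstacle is the circular dependence between the two conclusions: Theorem~\ref{SecondOrderEstimate} with exponent $p>n/2$ requires an $L^{2p/(p-1)}$-gradient bound to produce an $L^2$-Hessian bound, whereas ordinarily such an integrability improvement for $|\nabla u|$ is obtained only from Sobolev embedding after the Hessian estimate is established. The mass formula is exactly what breaks this loop: via the substitution $v=|\nabla u|^{1/2}$ and Kato's inequality, it hands us a weighted $L^2$-control for free, yielding an a priori $L^3$-bound on $|\nabla u|$ before any Hessian estimate is proved. This is the point at which the mass parameter $\bar m$ in the family $\mathcal{M}(b,\tau,\bar m,\Lambda,\kappa)$ plays an essential role.
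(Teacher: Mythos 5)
Your proposal is correct and follows essentially the same route as the paper: integration by parts for the $L^2$-gradient bound, the mass formula with Kato's inequality applied to $|\nabla u|^{1/2}$ followed by the Sobolev inequality of Theorem \ref{DirichletToSobolev} to break the circularity and obtain the a priori $L^3$-gradient bound, then Theorem \ref{SecondOrderEstimate} with $p=3$ for the Hessian and one further Sobolev application for the $L^6$ bound. The only cosmetic difference is your explicit cutoff before invoking the Sobolev inequality, where the paper instead appeals to the containment of $M_r$ in a metric ball.
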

\begin{proof}
First, assume that $u$ is an asymptotically linear harmonic function from Proposition \ref{asymptotic-gradient-convergence}.
By combining Proposition \ref{asymptotic-gradient-convergence} with Proposition \ref{FirstOrderEstimate} we find that 
\begin{align}
    \|\nabla u\|_{L^2(M_r)}\le C_1(r,r_0,b,\tau,\kappa,\Lambda).\label{FirstGradL2Bound}
\end{align}
Now, by the mass formula, see Equation \eqref{ADM-formula}, we know 
\begin{align}\label{e:sobolevwithmass}
     4\int_{M_r}|\nabla \sqrt{|\nabla u|}|^2 dV_g\le \int_{M_r}\frac{|\nabla^2 u|^2}{|\nabla u|} dV_g\le 16 \pi m \le 16 \pi \bar{m}.
\end{align}
It follows from the fact that $M_r$ can be contained in $B_{2d}(x)$ for some $x\in M_1$ and $d=d(r,r_0,b,\tau,\bar m)$, the Sobolev inequality on metric balls, Theorem \ref{DirichletToSobolev},  and Equation \eqref{FirstGradL2Bound} that we have
\begin{align}
    \left(\int_{M_r}|\nabla u|^3 dV_g \right)^{1/6} \le C_2(r,b,\tau,\kappa,\Lambda,\bar{m}).
\end{align}

Similarly, by combining Theorem \ref{C3AsymptoticEstimate} and Theorem \ref{SecondOrderEstimate} with $p=3$, we find that
\begin{align}
    \|\nabla^2 u\|_{L^2(M_r)}\le C_3(r,b,\tau,\kappa,\Lambda,\bar{m}).
\end{align}
Hence by applying the fact that $|\nabla |\nabla u||^2 \le |\nabla^2 u|^2$ along with Theorem \ref{DirichletToSobolev} we find that
\begin{align}
    \|\nabla u\|_{L^6(M_r)}\le C_4(r,b,\tau,\kappa,\Lambda,\bar{m}).
\end{align}
Now, since any asymptotically linear harmonic function differs from the harmonic functions of Proposition \ref{asymptotic-gradient-convergence} by subtracting a constant, we obtain the desired result.
\end{proof}

Now we move to a bound on the Hessian of asymptotically linear harmonic functions in terms of the mass.

\begin{thm}\label{SecondOrderToZeroEstimate}
Let $(M,g)$ be in $\mathcal{M}(b,\tau,\bar{m},\Lambda,\kappa)$ and let $u$ be an asymptotically linear harmonic function.
Then, for $r>r_0$, $2 \le q \le 6$, and $\theta=\frac{6-q}{2q}$, the following hold
\begin{align}
\left(\int_{M_r}|\nabla^3 u|^2 dV_g\right)^{1/2} &\le C(r,r_0,b,\tau,\kappa,\Lambda,\bar{m}),
\\ \left(\int_{M_r}|\nabla^2 u|^q dV_g\right)^{1/q}  
&\le C(r,r_0,b,\tau,\kappa,\Lambda,\bar{m}) m^{\frac{\theta}{2}}.\label{qSecondOrderEstimate}
\end{align}
\end{thm}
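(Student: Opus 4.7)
The plan is to prove the two estimates in sequence: first the uniform $L^2$ bound on $\nabla^3 u$ via the integration-by-parts identities of the previous section, then bootstrap to an $L^\infty$ bound on $|\nabla u|$ using Sobolev and Morrey embeddings, and finally extract the $m$-dependent $L^q$ bounds on $\nabla^2 u$ from the mass formula together with interpolation.

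For the first inequality, I would apply Theorem \ref{Third-ThirdOrderEstimate} with $\Omega = M_r$ and H\"older exponent $p = 3/2$. The curvature integrals involving $|\mathrm{Rm}|^{2p}$ and $|\mathrm{Rc}|^{2p}$ (where $2p = 3$) are uniformly bounded by $\|\mathrm{Rc}\|_{L^3}\leq\kappa$ together with the algebraic equivalence in Remark \ref{rmrk:int_Rm_from_int_Ric}. The paired factor $\|\nabla u\|_{L^{2p/(p-1)}} = \|\nabla u\|_{L^6}$ is uniformly bounded by Theorem \ref{AsymFlatL^2Estimate}, and likewise the $\|\nabla u\|_{L^6}^2$ term appearing later in the estimate. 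The boundary integrals over $\partial M_r \subset M\setminus M_{r_0}$ are controlled pointwise using the uniform $C^3$-asymptotics of Proposition \ref{C3AsymptoticEstimate} together with the area bound $|\partial M_r|\leq C(b,\tau)r^2$. Assembling these yields $\|\nabla^3 u\|_{L^2(M_r)}\leq C(r,r_0,b,\tau,\kappa,\Lambda,\bar m)$.

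With uniform $W^{3,2}$-control in hand, the next step is to bootstrap to $L^\infty$. Using the Kato inequality $|\nabla|\nabla^2 u||\leq |\nabla^3 u|$ and the uniform Neumann Sobolev embedding $W^{1,2}\hookrightarrow L^6$ (valid on a metric ball containing $M_r$ by Proposition \ref{NeumantoSobolev} together with the isoperimetric and volume hypotheses via Lemma \ref{AsymDiamEst} and Proposition \ref{prop:upper-lower-volume-bounds}), we obtain $\|\nabla^2 u\|_{L^6(M_r)}\leq C$. Morrey's inequality in the form of Theorem \ref{thm:Morrey's-inequality-balls}, applied to $|\nabla u|$ with $p=6>3$, then delivers the uniform pointwise bound $\|\nabla u\|_{L^\infty(M_r)}\leq C$. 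Combining this with the mass formula \eqref{ADM-formula} gives
\begin{equation*}
    \int_{M_r}|\nabla^2 u|^2\,dV_g \;\leq\; \|\nabla u\|_{L^\infty(M_r)}\int_{M_r}\frac{|\nabla^2 u|^2}{|\nabla u|}\,dV_g \;\leq\; 16\pi C\, m,
\end{equation*}
so that $\|\nabla^2 u\|_{L^2(M_r)}\leq C m^{1/2}$. The general $L^q$ bound for $2\leq q\leq 6$ then follows from standard $L^p$-interpolation between $L^2$ and $L^6$: since $1/q = \theta/2 + (1-\theta)/6$ precisely when $\theta = (6-q)/(2q)$, one has $\|\nabla^2 u\|_{L^q(M_r)} \leq \|\nabla^2 u\|_{L^2(M_r)}^{\theta}\|\nabla^2 u\|_{L^6(M_r)}^{1-\theta}\leq Cm^{\theta/2}$.

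The main obstacle I anticipate is carefully tracking the quantitative dependence of the boundary integrals in Theorem \ref{Third-ThirdOrderEstimate} on the parameters $r,r_0,b,\tau$, and verifying that the Sobolev and Morrey inequalities apply uniformly on a ball large enough to contain $M_r$ but still inside a controlled asymptotic region (for which Lemma \ref{AsymDiamEst} provides the required radius).
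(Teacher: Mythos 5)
Your proposal is correct and follows essentially the same route as the paper: Theorem \ref{Third-ThirdOrderEstimate} with $p=3/2$ plus the asymptotic boundary control and curvature/$L^6$ bounds gives the uniform $\|\nabla^3u\|_{L^2}$ estimate, the Kato--Sobolev step gives $\|\nabla^2u\|_{L^6}\leq C$, and the mass formula combined with $L^2$--$L^6$ interpolation gives \eqref{qSecondOrderEstimate}. The only difference is cosmetic: you make explicit the intermediate $L^\infty$ bound on $|\nabla u|$ needed before invoking the mass formula, which the paper leaves compressed (it appears separately as Corollary \ref{cor:C0-grad}).
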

\begin{proof}

If we combine Theorem \ref{C3AsymptoticEstimate}, Theorem \ref{AsymFlatL^2Estimate}, Theorem \ref{NeumantoSobolev}, Theorem \ref{Third-ThirdOrderEstimate}  with $p=\frac{3}{2}$, Remark \ref{rmrk:int_Rm_from_int_Ric}, and the fact that $M_r$ can be contained in $B_{2d}(x)$ for some $x\in M_1$ and $d=d(r,r_0,b,\tau,\bar m)$, then we find that
\begin{align}
\|\nabla^3 u\|_{L^2(M_r)} \le C_1(r,r_0,b,\tau,\kappa,\Lambda).
\end{align}

Now the fact that $|\nabla |\nabla^2 u||^2 \le |\nabla^3 u|^2$ together with Theorem \ref{NeumantoSobolev} implies that
\begin{align}
    \left(\int_{M_r}|\nabla^2 u|^6 dV_g\right)^{1/6} &\le C_2(r,r_0,b,\tau,\kappa,\Lambda)\left(\int_{M_r}|\nabla^3 u|^2 dV_g\right)^{1/2} \\
    &\le C_3(r,r_0,b,\tau,\kappa,\Lambda).
\end{align}
At this point we may use the mass formula with an $L^q$ interpolation inequality where $2 < q < 6$ and $\frac{1}{q}=\frac{\theta}{2}+\frac{1-\theta}{6}$ to see that
\begin{align}
\left(\int_{M_r}|\nabla^2 u|^q dV_g\right)^{1/q} &\le  \left(\int_{M_r}|\nabla^2 u|^2 dV_g\right)^{\frac{\theta}{2}}\left(\int_{M_r}|\nabla^2 u|^6 dV_g\right)^{\frac{1-\theta}{6}} \\
&\le C_4(r,r_0,b,\tau,\kappa,\Lambda)m^{\frac{\theta}{2}}.\label{qSecondOrderEstimate2}
\end{align}
\end{proof}

The above gives us an $L^{\infty}$ bound on $|\nabla u|$, which will be of great help later.
\begin{cor}\label{cor:C0-grad}
    Let $(M^3,g)$ be in $\mathcal{M}(b,\tau,\bar{m},\Lambda,\kappa)$ and let $u$ be an asymptotically linear harmonic function. Then, for any $r>r_{0}$, we have
    \begin{equation}
        \|\nabla u\|_{C^{0}(M_{r})}\leq C(r,r_{0},b,\tau,\kappa,\Lambda,\bar{m})
    \end{equation}
\end{cor}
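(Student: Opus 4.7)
The plan is to derive the $C^0$ bound on $\nabla u$ over $M_r$ by applying the Morrey-type inequality of Theorem \ref{thm:Morrey's-inequality-balls} to the scalar function $f = |\nabla u|$. Outside of $M_{r_0}$, Theorem \ref{C3AsymptoticEstimate} already provides a bound $|\nabla u| \le C(b,\tau)$, so it suffices to bound $|\nabla u|$ uniformly on any bounded region such as $M_r$ for fixed $r > r_0$. Lemma \ref{AsymDiamEst} ensures that $M_r$ has diameter at most $d = d(r,b,\tau,\Lambda)$, so picking a basepoint $x_0 \in M_1$ we have $M_r \subset B_d(x_0)$; the same lemma produces $\hat{r} = \hat{r}(b,\tau,d+R)$ so that the slightly enlarged ball $B_{d+R}(x_0)$ is contained in $M_{\hat{r}}$.

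To apply Theorem \ref{thm:Morrey's-inequality-balls} with $s = d$ and some fixed $R > 0$, I need $W^{1,p}$ control of $|\nabla u|$ on $B_{d+R}(x_0)$ for some $p > 3$. The $L^1$ norm of $|\nabla u|$ on that ball is controlled by its $L^6$ norm on $M_{\hat{r}}$ via H\"older's inequality combined with the volume bound of Lemma \ref{DirichletToVolumeBound}, and the $L^6$ bound is furnished directly by Theorem \ref{AsymFlatL^2Estimate}. For the weak gradient, Kato's inequality gives $|\nabla |\nabla u|| \le |\nabla^2 u|$ almost everywhere, and Theorem \ref{SecondOrderToZeroEstimate} provides an $L^q$ bound on $|\nabla^2 u|$ over $M_{\hat{r}}$ for every $q \in [2,6]$. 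Taking, say, $p = 4$ meets the hypothesis $p > 3$ in Theorem \ref{thm:Morrey's-inequality-balls}, so the Morrey inequality yields
\begin{equation*}
\||\nabla u|\|_{C_R^{0,1/4}(B_d(x_0))} \le C(r,r_0,b,\tau,\kappa,\Lambda,\bar m),
\end{equation*}
and since the scale-$R$ H\"older norm dominates the sup norm, the desired $C^0$ bound on $M_r$ follows.

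The only technical wrinkle is that Theorem \ref{thm:Morrey's-inequality-balls} is stated for $C^1$ functions whereas $|\nabla u|$ is merely Lipschitz. This is harmless: the proof of Theorem \ref{SobolevToMorrey} needs only that Lebesgue points of $f$ are dense and that the Poincar\'e inequality applies to $f$, both of which hold for any $W^{1,p}$ function. Alternatively, one can regularize by working with $(|\nabla u|^2 + \varepsilon)^{1/2}$, whose gradient is pointwise dominated by $|\nabla^2 u|$ independently of $\varepsilon$, apply the Morrey inequality, and let $\varepsilon \to 0$. Either route produces uniform control of $|\nabla u|$ on $M_r$ with constants depending only on the declared parameters.
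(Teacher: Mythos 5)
Your proposal is correct and follows essentially the same route as the paper: both apply the uniform Morrey inequality of Theorem \ref{thm:Morrey's-inequality-balls} on a metric ball $B_d(x_0)\supset M_r$ furnished by Lemma \ref{AsymDiamEst}, feeding in the $L^6$ bound on $\nabla u$ and the $L^p$ bound on $\nabla^2 u$ from Theorems \ref{AsymFlatL^2Estimate} and \ref{SecondOrderToZeroEstimate}. Your choice of $p=4$ rather than the paper's $p=6$ is immaterial, and your explicit handling of the fact that $|\nabla u|$ is only Lipschitz (via Kato's inequality or regularization) is a point the paper passes over silently.
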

\begin{proof}
    To begin, we may use Lemma \ref{AsymDiamEst} to find a $d=d(b,\tau,\Lambda,r)$ and $x_{0}$ in $M_{1}$ such that $B_{d}=B_{d}(x_{0})$ contains $M_{r}$, and we may pick $s=s(b,\tau,d+R)$ such that $B_{d+R}=B_{d+R}(x_{0})$ is contained in $M_{s}$. Then, from Theorem \ref{thm:Morrey's-inequality-balls} we have
    \begin{equation}
        \|\nabla u\|_{C_{R}^{0,\frac{p-n}{p}}(B_{d})}\leq C_1(b,\tau,\kappa,R,p)\left(\|\nabla u\|_{L^{1}(B_{d+R})}+\|\nabla^2u\|_{L^{p}(B_{d+R})}\right).
    \end{equation}
    Now, using H{\"o}lder's inequality and the results of Theorem \ref{SecondOrderToZeroEstimate} with $p=6$, we see that
    \begin{equation}
        \|\nabla u\|_{C_{R}^{0,\frac{1}{2}}(B_{d})}\leq C_2(b,\tau,\kappa,R,p)\left(|B_{d+R}|^{\frac{5}{6}}\|\nabla u\|_{L^{6}(B_{d+R})}+\|\nabla^2u\|_{L^{6}(B_{d+R})}\right)
    \end{equation}
From Theorem \ref{SecondOrderToZeroEstimate}, we see that the above is bounded by a constant $C_3(r,r_0,b,\tau,\bar{m},\kappa,\Lambda)$. Of course, this bounds the supremum of $|\nabla u|$ independently of $R$, and the result follows.
\end{proof}

Having established control on the $L^p$ norm of $\nabla^2u^i$ and on the $C^{0}$ norm of $|\nabla u^{i}|$, we may begin to analyze the inner products of the gradients of $u^i$. 

\begin{thm}\label{GradInnerProductToZeroEstimate}
Let $(M^3,g)$ be in $\mathcal{M}(b,\tau,\bar{m},\Lambda,\kappa)$ and let $u^{i},u^{j}$ be a pair of its asymptotically linear harmonic functions.
Then, for $r>r_0$, $1\le q\le6$, and $\theta=\tfrac{6-q}{2q}$, we have
 \begin{align}
 &\left(\int_{M_r} |\nabla g(\nabla u^i,\nabla u^j)|^q dV_g\right)^{\frac{1}{q}}\le C(r,r_0,b,\tau,\kappa,\Lambda,\bar{m},q) m^{\frac{\theta}{2}}.\label{GradInnerProductEstimate}
\end{align}
\end{thm}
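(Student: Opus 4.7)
The plan is direct: differentiate the pointwise inner product using the metric compatibility of the Levi-Civita connection, bound the result by a combination of Hessians and gradients, and then factor out the uniform $C^0$ bound on $|\nabla u|$ to reduce to the $L^q$ Hessian estimate of the previous theorem.

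More specifically, I would first compute
\begin{equation*}
\nabla_k\, g(\nabla u^i,\nabla u^j) = g(\nabla_k \nabla u^i, \nabla u^j) + g(\nabla u^i, \nabla_k \nabla u^j),
\end{equation*}
which by Cauchy-Schwarz yields the pointwise bound
\begin{equation*}
|\nabla g(\nabla u^i,\nabla u^j)| \leq |\nabla^2 u^i|\,|\nabla u^j| + |\nabla u^i|\,|\nabla^2 u^j|.
\end{equation*}
By Corollary \ref{cor:C0-grad}, there is a constant $C=C(r,r_0,b,\tau,\kappa,\Lambda,\bar m)$ with $\|\nabla u^k\|_{C^0(M_r)}\leq C$ for $k\in\{i,j\}$. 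Substituting this bound and taking $L^q(M_r)$ norms gives
\begin{equation*}
\|\nabla g(\nabla u^i,\nabla u^j)\|_{L^q(M_r)} \leq C\bigl(\|\nabla^2 u^i\|_{L^q(M_r)}+\|\nabla^2 u^j\|_{L^q(M_r)}\bigr).
\end{equation*}
Applying the Hessian estimate \eqref{qSecondOrderEstimate} of Theorem \ref{SecondOrderToZeroEstimate} to each term on the right then delivers the desired bound $C\, m^{\theta/2}$ with $\theta=(6-q)/(2q)$ for $q\in[2,6]$.

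For the remaining range $q\in[1,2)$, I would use Lemma \ref{DirichletToVolumeBound} (or Lemma \ref{AsymDiamEst}) to bound $|M_r|$ uniformly and then apply H\"older's inequality to reduce to the $q=2$ case, at the mild cost of decreasing the mass exponent to $\tfrac{1}{2}$ on this subrange. There is no real obstacle here since all the hard analysis has been absorbed into Corollary \ref{cor:C0-grad} and Theorem \ref{SecondOrderToZeroEstimate}; the proof is essentially a pointwise Leibniz computation followed by assembly of prior estimates.
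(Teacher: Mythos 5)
Your proposal is correct and follows essentially the same route as the paper: the Leibniz rule plus Cauchy--Schwarz, the $C^0$ gradient bound from Corollary \ref{cor:C0-grad}, and then the Hessian estimate \eqref{qSecondOrderEstimate} of Theorem \ref{SecondOrderToZeroEstimate}. Your separate treatment of $q\in[1,2)$ is actually more careful than the paper (which silently applies Theorem \ref{SecondOrderToZeroEstimate} outside its stated range $2\le q\le 6$), though as you note the H\"older reduction only yields the exponent $m^{1/2}$ there rather than the stated $m^{\theta/2}$ with $\theta>1$ --- immaterial in practice, since the result is only invoked later with $q=4$.
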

\begin{proof}
We calculate that
\begin{equation}
    \int_{M_r}|\nabla g(\nabla u^{i},\nabla u^{j})|^qdV_g\leq C_1(q)\int_{M_r} |\nabla ^2u^i|^q|\nabla u^j|^q+|\nabla ^2u^j|^q|\nabla u^i|^qdV_g.
\end{equation}
Since both $|\nabla u^i|$ and $|\nabla u^{j}|$ are bounded above by some constant $C_2(r,r_{0},b,\tau,\kappa,\Lambda,\bar{m})$ by Corollary \ref{cor:C0-grad}, it follows that we have
\begin{equation}
    \int_{M_r} |\nabla g(\nabla u^{i},\nabla u^{j})|^qdV_g\leq C_1(q)C_2(r,r_{0},b,\tau,\kappa,\Lambda,\bar{m})^q\int_{M_r} |\nabla^2u^i|^q+|\nabla^{2}u^j|^qdV_g.
\end{equation}
By Theorem \ref{SecondOrderToZeroEstimate} and the above, we see that
\begin{equation}
    \|\nabla g(\nabla u^{i},\nabla u^{j})\|_{L^{q}}\leq 2^{\frac{1}{q}}C_1(q)C_2(r,r_{0},b,\tau,\kappa,\Lambda,\bar{m})m^{\frac{6-q}{4q}}.
\end{equation}

\end{proof}

\noindent Next we apply Theorem \ref{GradInnerProductToZeroEstimate} with our Morrey's inequality to show the inner products of the gradients converge to their average.

\begin{cor}\label{InnerProductToZeroEstimate}
Let $(M^3,g)$ be in $\mathcal{M}(b,\tau,\bar{m},\Lambda,\kappa)$ and let $u^{i},u^{j}$ be any two asymptotically linear harmonic functions. 
If $x\in M$, $r>r_0$, and $R, d>0$ are such that $B_{d+R}(x) \subset M_r$, then
 \begin{align}
 & \|g(\nabla u^i,\nabla u^j)-g(\nabla u^i,\nabla u^j)_{B_{d+R}(x)}\|_{C_{R}^{0,\gamma}(B_d(x))} \le C(r,r_0,b,\tau,\kappa,\Lambda,\bar{m})m^{\frac{1}{8}} .\label{InnerProductEstimate}
\end{align}
\end{cor}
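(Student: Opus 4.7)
The plan is to apply the Morrey-type inequality of Theorem \ref{thm:Morrey's-inequality-balls} directly to the zero-mean function
\[
f := g(\nabla u^i,\nabla u^j) - g(\nabla u^i,\nabla u^j)_{B_{d+R}(x)}
\]
on the ball $B_d(x)$, with the larger ball $B_{d+R}(x)$ supplying the integrals, and then combine with Theorem \ref{GradInnerProductToZeroEstimate} to convert $L^p$ control of $\nabla f$ into the desired mass-decay factor.

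Specifically, I would first note that $\nabla f = \nabla g(\nabla u^i,\nabla u^j)$ and apply Theorem \ref{thm:Morrey's-inequality-balls} with $p=4$ (so that $n=3$ gives H\"older exponent $\gamma = (p-3)/p = 1/4$). Since $B_{d+R}(x)\subset M_r$, this produces
\[
\|f\|_{C^{0,1/4}_R(B_d(x))} \le C(b,\tau,\Lambda,\kappa,R)\bigl(\|f\|_{L^1(B_{d+R}(x))} + \|\nabla g(\nabla u^i,\nabla u^j)\|_{L^4(M_r)}\bigr).
\]
Theorem \ref{GradInnerProductToZeroEstimate} applied with $q=4$ and $\theta = (6-4)/(2\cdot 4) = 1/4$ then controls the gradient term by $C(r,r_0,b,\tau,\kappa,\Lambda,\bar m)\, m^{1/8}$, which is precisely the target mass power.

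To handle the $L^1$ term, I would invoke the Poincar\'e inequality of Lemma \ref{Poincare-from-uniform-SN} on $B_{d+R}(x)$; since $f$ has mean zero on that ball, this bounds $\|f\|_{L^1(B_{d+R}(x))}$ in terms of $|B_{d+R}(x)|^{1+1/3}\bigl(\dashint_{B_{d+R}(x)}|\nabla f|^4\bigr)^{1/4}$. Because $B_{d+R}(x)\subset M_r$, Lemma \ref{DirichletToVolumeBound} (or Proposition \ref{prop:upper-lower-volume-bounds}) gives a uniform upper bound on $|B_{d+R}(x)|$ in terms of $r,r_0,b,\tau,\Lambda$, so again Theorem \ref{GradInnerProductToZeroEstimate} yields $\|f\|_{L^1(B_{d+R}(x))} \le C\, m^{1/8}$. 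Assembling the two estimates gives the claim.

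The only real choice in the argument is the exponent $p=4$, which is dictated by the wish to produce the mass power $m^{1/8}$; for that $p$, the target H\"older exponent is fixed at $\gamma=1/4$. There is no serious obstacle beyond bookkeeping of the many constants --- each piece (Morrey, Poincar\'e, volume bound, and the $L^p$ estimate on $\nabla g(\nabla u^i,\nabla u^j)$) depends only on the parameters appearing in $\mathcal{M}(b,\tau,\bar m,\Lambda,\kappa)$ together with $r,r_0$, which is exactly the dependence asserted in the corollary.
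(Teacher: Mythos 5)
Your proposal is correct and follows essentially the same route as the paper's own proof: Morrey's inequality (Theorem \ref{thm:Morrey's-inequality-balls}) with $p=4$ applied to the mean-zero function on $B_{d+R}(x)$, the $L^1$ term absorbed via the Poincar\'e inequality and the volume bound from Lemma \ref{DirichletToVolumeBound}, and the gradient term controlled by Theorem \ref{GradInnerProductToZeroEstimate} with $q=4$, $\theta=1/4$, yielding $m^{1/8}$.
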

\begin{proof}
If we choose $q=4$ and apply Theorem \ref{thm:Morrey's-inequality-balls} to Theorem \ref{GradInnerProductToZeroEstimate}, where $\theta = \frac{1}{4}$, we find
\begin{align}
    &\| g(\nabla u^i,\nabla u^j)-g(\nabla u^i,\nabla u^j)_{B_{d+R}(x)}\|_{C_{R}^{0,\gamma}(B_d(x))}
    \\&\le C_1(r,b,\tau,\kappa,\Lambda)\int_{B_{d+R}(x)}|g(\nabla u^i,\nabla u^j)-g(\nabla u^i,\nabla u^j)_{B_{d+R}(x)}|dV_g
    \\&+C_1(r,b,\tau,\kappa,\Lambda) \left(\int_{B_{d+R}(x)} |\nabla  g(\nabla u^i,\nabla u^j)|^4dV_g\right)^{1/4}
    \\&\le C_2(r,b,\tau,\kappa,\Lambda)\left(|B_{d+R}(x)|\left(\dashint_{B_{d+R}(x)} |\nabla  g(\nabla u^i,\nabla u^j)|^4dV_g\right)^{1/4}+m^{\frac{1}{8}}\right)
    \\&\le C_3(r,r_0, b, \tau,\kappa, \Lambda,\bar{m})m^{\frac{1}{8}},
\end{align}
where we use that $B_{d+R}(x) \subset M_r$ and Lemma \ref{DirichletToVolumeBound} in the last line.
\end{proof}

Now the goal is to put the estimates of the previous sections together in order to prove Theorem \ref{Thm-Mass Stability}. The first main step will be to show that since the average integral of $g(\nabla u^i,\nabla u^j)$ over a large coordinate annulus is close to $\delta^{ij}$ we can show that the average over $M_r$ is close as well. Then we notice that the previous estimate implies that $(u^1,u^2,u^3)$ are actually a global coordinate system on $M$, which will finish the proof.

\begin{proof}[Proof of Theorem \ref{Thm-Mass Stability}] 
Let $r_0$ be given by Proposition \ref{p:asymptoticannulusdecay} and suppose $r>r_0$ is a radius, which will be chosen successively larger in the arguments below. Denote $\mathcal{A}_{r_0,r}=M_r\setminus M_{r_0}$, and then consider
\begin{align}
    &\dashint_{M_r} |g(\nabla u^i,\nabla u^j)-\delta^{ij}| dV_g 
    \\&= \frac{1}{|M_r|}\left( \int_{\mathcal{A}_{r_0,r}} |g(\nabla u^i,\nabla u^j)-\delta^{ij}| dV_g+\int_{M_{r_0}} |g(\nabla u^i,\nabla u^j)-\delta^{ij}| dV_g  \right)
    \\&\le \frac{1}{|\mathcal{A}_{r_0,r}|}\left( \int_{\mathcal{A}_{r_0,r}} |g(\nabla u^i,\nabla u^j)-\delta^{ij}| dV_g+ \max_{1\le i \le 3} \left(\int_{M_{r_0}}|\nabla u^i|^2 dV_g\right) +|M_{r_0}|\right)
    \\&\le  \dashint_{\mathcal{A}_{r_0,r}} |g(\nabla u^i,\nabla u^j)-\delta^{ij}| dV_g+ \frac{|M_{r_0}|}{|\mathcal{A}_{r_0,r}|}C(r_0,b,\tau,\kappa,\Lambda),
\end{align}
where we have used Theorem \ref{AsymFlatL^2Estimate} and Proposition \ref{p:asymptoticannulusdecay} in the last line.
We know that since $\mathcal{A}_{r_0,r}$ is contained in the asymptotically flat region of $M$ that $|\mathcal{A}_{r_0,r}|\ge C_1(b,\tau)(r^3-r_0^3)$ and by Lemma \ref{DirichletToVolumeBound} we know $|M_{r_0}|\le C_2(r_0,b,\tau,\Lambda)r_0^3$ hence
\begin{align}
\begin{split}
    \dashint_{M_r} |g(\nabla u^i,\nabla u^j)-\delta^{ij}| dV_g 
   &\le \frac{1}{C_1(b,\tau)(r^3-r_0^3)} \int_{\mathcal{A}_{r_0,r}} |g(\nabla u^i,\nabla u^j)-\delta^{ij}| dV_g\\
   {}&\qquad+C_2(r_0,b,\tau,\Lambda) \frac{r_0^3}{(r^3-r_0^3)}.
\end{split}
   \label{AlmostAverageToZeroGlobally}
\end{align}
Notice that, by Proposition \ref{p:asymptoticannulusdecay}, choose $m$ small enough so that the first term on the right side of \eqref{AlmostAverageToZeroGlobally} is smaller than $\frac{\varepsilon}{2}$.
It follows that we may choose $r$ large enough so that the second term on the right side of \eqref{AlmostAverageToZeroGlobally} is less than $\frac{\varepsilon}{2}$.  So, given an $\varepsilon>0$, there is an $r_1=r_1(r_0,b,\tau,\Lambda,\varepsilon)$ and $m_1=m_1(r_0,b,\tau,\varepsilon)$ such that when $r>r_1$ and $m\leq m_1$, we have
\begin{align}
    \dashint_{M_r} |g(\nabla u^i,\nabla u^j)-\delta^{ij}| dV_g 
   &\le \varepsilon. \label{AverageToZeroGlobally}
\end{align}

Fix a point $x\in M_1$. By the first statement in Lemma \ref{AsymDiamEst} there is a radius $d=d(b,\tau,\Lambda,r)$ so that $M_r \subset B_{d}(x)$. We estimate
\begin{align}
   & \left|\dashint_{B_{d+1}(x)}g(\nabla u^i,\nabla u^j)dV_g-\delta^{ij}\right| 
    \\&=\dashint_{M_r} |g(\nabla u^i,\nabla u^j)-\delta^{ij}| dV_g
    \\&\quad +\left(\dashint_{B_{d+1}(x)}|g(\nabla u^i,\nabla u^j)-\delta^{ij}| dV_g-\dashint_{M_r}|g(\nabla u^i,\nabla u^j)-\delta^{ij}|dV_g\right)
    \\&\le\dashint_{M_r} |g(\nabla u^i,\nabla u^j)-\delta^{ij}| dV_g
    \\&\quad+\frac{1}{|B_{d+1}(x)|}\int_{B_{d+1}(x)\setminus M_r}|g(\nabla u^i,\nabla u^j)-\delta^{ij}| dV_g
     \\&\le\dashint_{M_r} |g(\nabla u^i,\nabla u^j)-\delta^{ij}| dV_g+\frac{C_3(r_0,b,\tau,\bar{m})|B_{d+1}(x)\setminus M_r|}{r^{\tau}|B_{d+1}(x)|}
     \\&\le\varepsilon+C_3(r_0,b,\tau,\bar{m})r^{-\tau}\label{e:nod-dependence}
\end{align}
where in the penultimate inequality we have made use of the pointwise estimate \eqref{e:asymptotic-gradient}. Notice that the estimate \eqref{e:nod-dependence} is independent of $d$. It follows there is an $r_2=r_2(r_0,b,\tau,\Lambda,\varepsilon)$ so that if $r>r_2$, we have
\begin{equation}\label{AverageInnerProductEstimate}
    \left|\dashint_{B_{d+1}(x)}g(\nabla u^i,\nabla u^j)dV_g-\delta^{ij}\right|\le2\varepsilon.
\end{equation}

Now, by the second statement of Lemma \ref{AsymDiamEst}, there is an $\hat{r}=\hat{r}(r, r_0,b,\tau,\Lambda)$ so that $B_{d+1}(x)\subset M_{\hat{r}}$. Combining Corollary \ref{InnerProductToZeroEstimate} and equation \eqref{AverageInnerProductEstimate}, we find that there is an $m_2=m_2(r,r_0,b,\tau,\kappa,\Lambda,\bar m)$ such that if $m<m_2$, then
\begin{align}
    \| g(\nabla u^i,\nabla u^j)-\delta^{ij}\|_{C_{1}^{0,\gamma}(B_d(x))}
    &\le  \| g(\nabla u^i,\nabla u^j)- g(\nabla u^i,\nabla u^j)_{B_{d+1}(x)}\|_{C_{1}^{0,\gamma}(B_d(x))}
    \\&\quad + |  g(\nabla u^i,\nabla u^j)_{B_{d+1}(x)}-\delta^{ij}|
    \\
    {}&\leq C(r,r_0,b,\tau,\kappa,\Lambda,\bar{m})m^{\tfrac18}+2\varepsilon\\
    &\le 3\varepsilon.\label{LocalHolderEst}
\end{align}
Notice that, using Proposition \ref{prop:equiv-Holder-norms} (taking $R=1$) the estimate in \eqref{LocalHolderEst}, we have the global H\"{o}lder estimate 
\begin{equation}\label{e:holderest1}
    \| g(\nabla u^i,\nabla u^j)-\delta^{ij}\|_{C^{0,\gamma}(B_d(x))}<6\varepsilon.
\end{equation}
On the other hand, Lemma \ref{asymptotic-gradient-convergence} implies there is an $r_3=r_3(r_0,b,\tau,\bar m)$ so that when $r\geq r_3$, we have
\begin{align}\label{e:holderest2}
    \| g(\nabla u^i,\nabla u^j)-\delta^{ij}\|_{C^{0,\gamma}(M \setminus M_r)}\le  \varepsilon.
\end{align}
Finally, we fix a value of $r\geq\max(r_2,r_3)$. This fixes the size of $m_2$ required for \eqref{e:holderest1} to hold. It follows that when $m$ is sufficiently small relative to $r_0,b,\tau,\kappa,$ and $\Lambda$, equations \eqref{e:holderest2} and \eqref{e:holderest2} combine to show the global H{\"o}lder closeness
\begin{equation}\label{e:globalholderest}
    \| g(\nabla u^i,\nabla u^j)-\delta^{ij}\|_{C^{0,\gamma}(M)}\le  7\varepsilon.    
\end{equation}

As a consequence of \eqref{e:globalholderest}, for sufficiently small $\varepsilon$, the map $U:M \rightarrow \R^3$ given by $U(x)=(u^1(x),u^2(x),u^3(x))$ becomes a diffeomorphism, giving a global coordinate system on $M \cong\R^3$. Then, in these coordinates $g^{ij}=g(\nabla u^i,\nabla u^j)$ and \eqref{e:globalholderest} becomes $\| g^{ij}-\delta^{ij}\|_{C^{0,\gamma}(M)}\le  7\varepsilon.$ By perhaps taking $\varepsilon$ smaller, we may assume  $\varepsilon<\frac{1}{100}$. Now since $g^{-1}$ is within $7\varepsilon$ of the identity matrix in this global coordinate system, its inverse, that is $\{g_{ij}\}$, is within $14\varepsilon$ of the identity in these coordinates, yielding the desired estimate. 
\end{proof}


\section{Estimates of Harmonic Functions on the Torus} \label{sec-Torus}
In a similar way that we used the mass formula to control the geometry of asymptotically flat manifolds in terms of their mass, we can use the formula Theorem \ref{t:Stern} to control the geometry of $3$-manifolds in $\mathcal{N}(\Lambda,V,\kappa)$ in terms of the $L^{1}$ norm of the negative part of their scalar curvature. 
Before approaching Theorem \ref{Thm-Torus Stability}, some preliminary observations are required. The following is an important result which comes from combining information about isoperimetric profiles with the result \cite[Theorem 5.4]{Petersen-97}.
\begin{prop}\label{prop:Petersen-harmonic-norm-bound}
    Let $\Lambda,V,\kappa>0$. 
    Then $\mathcal{N}\left(\Lambda,V,\kappa\right)$ is pre-compact in the $C^{0,\gamma}$-topology for any $\gamma\in(0,1)$.
    In particular, the underlying manifolds in  $\mathcal{N}\left(\Lambda,V,\kappa\right)$ range over a finite number of diffeomorphism types. 
\end{prop}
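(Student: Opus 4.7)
The plan is to reduce the statement to Petersen's \cite[Theorem 5.4]{Petersen-97}, which asserts $C^{0,\gamma}$-precompactness for classes of Riemannian manifolds with uniformly bounded harmonic $C^{1,\alpha}$-norm at a fixed scale. The standard inputs guaranteeing such a harmonic-norm bound are: (i) a uniform upper bound on diameter, (ii) uniform non-collapsing of metric balls, and (iii) an integral bound on Ricci at exponent $p>n/2=3/2$. Condition (iii) is built into the definition of $\mathcal{N}(\Lambda,V,\kappa)$ via $\|\mathrm{Rc}_g\|_{L^3}\leq\kappa$, so the task reduces to deriving (i) and (ii) from the isoperimetric and volume hypotheses.

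First I would establish volume-growth control for balls. Applying Lemma \ref{NeumanToDirichletIsoperimetricEst} to the Neumann isoperimetric bound $IN_{3/2}(B_r(x))\geq\Lambda$ yields a lower bound on $ID_{3/2}$ of every compactly contained subdomain, depending only on $\Lambda$. Lemma \ref{DirichletIsoToLowerVolumeGrowth} then gives the uniform non-collapsing estimate $|B_r(x)|\geq C(\Lambda)r^3$ for all $r\leq\Diam(M,g)$, which verifies (ii). Choosing $r=\Diam(M,g)$ and combining this inequality with $\Vol_g(M)\leq V$ produces $\Diam(M,g)\leq (V/C(\Lambda))^{1/3}$, verifying (i).

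With the three hypotheses in place, Petersen's theorem furnishes a positive radius $r_h=r_h(\Lambda,V,\kappa)$ and a uniform bound on the harmonic $C^{1,\alpha}$-norm at scale $r_h$ across all of $\mathcal{N}(\Lambda,V,\kappa)$. The standard harmonic-coordinate argument (a covering/Arzel\`a--Ascoli procedure) then delivers $C^{0,\gamma}$-precompactness for any $\gamma\in(0,1)$: given a sequence in $\mathcal{N}(\Lambda,V,\kappa)$, one extracts a subsequence whose metrics converge in $C^{0,\gamma}$ on compatible harmonic atlases to a limit metric on a smooth manifold. The finiteness of diffeomorphism types is then routine: any $C^{0,\gamma}$-convergent sequence of metrics eventually lives on a single underlying smooth manifold (via the bi-Lipschitz maps supplied by the harmonic charts), so an infinite family of pairwise non-diffeomorphic examples would contradict precompactness.

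I expect the main obstacle to be primarily bookkeeping rather than genuine mathematical difficulty: specifically, matching our hypothesis $\|\mathrm{Rc}_g\|_{L^3}\leq\kappa$ to the integral-curvature quantity that actually controls the harmonic radius in \cite{Petersen-97} (typically phrased via the $L^p$-norm of the negative part of the smallest Ricci eigenvalue, cf.\ Lemma \ref{lem:int_ric_growth}), and checking that the dependence of constants in Petersen's theorem is indeed only on $\Lambda,V,\kappa$ once (i)--(iii) are fixed.
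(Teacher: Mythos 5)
Your overall strategy coincides with the paper's: convert the Neumann isoperimetric hypothesis into Dirichlet control, deduce non-collapsing of balls via Lemma \ref{DirichletIsoToLowerVolumeGrowth}, and feed this into Petersen's \cite[Theorem 5.4]{Petersen-97} to get a uniform harmonic norm bound and hence $C^{0,\gamma}$-precompactness. However, there is a genuine gap in your non-collapsing step. Lemma \ref{NeumanToDirichletIsoperimetricEst} only yields $ID_{3/2}(\Omega_1)\geq IN_{3/2}(\Omega_2)$ under the hypothesis $|\Omega_1|\leq\tfrac{1}{2}|\Omega_2|$; it does \emph{not} bound $ID_{3/2}$ of ``every compactly contained subdomain.'' On a closed manifold this restriction is essential: taking $\Omega=M$ shows $ID_{3/2}(M,g)=0$, so the ODE argument in Lemma \ref{DirichletIsoToLowerVolumeGrowth} cannot be run out to $r=\Diam(M,g)$. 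Your claim that $|B_r(x)|\geq C(\Lambda)r^3$ holds for all $r\leq\Diam(M,g)$ is therefore unjustified, and the diameter bound you extract by setting $r=\Diam(M,g)$ (where $B_r(x)=M$ and the half-volume condition manifestly fails) does not follow.

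The paper closes exactly this hole by first invoking the integral-Ricci volume \emph{upper} bound (Corollary \ref{volume-growth-above}) to produce a scale $R(\kappa,V)$ with $|B_r(x)|\leq\tfrac{1}{2V}\leq\tfrac12|M|$ for $r\leq R(\kappa,V)$; only at those scales does Lemma \ref{NeumanToDirichletIsoperimetricEst} apply, giving two-sided bounds $C(\Lambda,\kappa)^{-1}r^3\leq|B_r(x)|\leq C(\Lambda,\kappa)r^3$ for $r\leq R(\kappa,V)$, which is all that Petersen's theorem needs. Your proposal never uses the volume upper bound and so never identifies this scale. If you do want an explicit diameter bound, it follows from the small-scale non-collapsing together with $\Vol_g(M)\leq V$ by a packing argument (disjoint balls of radius $R(\kappa,V)/2$ along a geodesic), but the resulting constant depends on $\kappa$ as well as $\Lambda$ and $V$, not on $\Lambda$ and $V$ alone as your formula suggests.
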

\begin{proof}
    According to Lemma \ref{NeumanToDirichletIsoperimetricEst}, we see that for any region $\Omega\subset M$ such that $|\Omega|\leq\frac{1}{2V}$, we have
    \begin{equation}\label{e:IDtoIN}
        ID_{\frac{3}{2}}\left(\Omega\right)\geq IN_{\frac{3}{2}}\left(M,g\right)\geq\Lambda.
    \end{equation}
    In particular, by Corollary \ref{volume-growth-above} we see there is an $R(\kappa,V)$ such that if $r\leq R(\kappa,V)$, then $|B_r(x)|\leq\frac{1}{2V}$. Combining this with \eqref{e:IDtoIN}, Lemma \ref{DirichletIsoToLowerVolumeGrowth}, and another application of Corollary \ref{volume-growth-above}, we obtain the following control above and below for $r\leq R(\kappa,V)$
    \begin{equation}
        \frac{r^{3}}{C(\Lambda,\kappa)}\leq|B_r(x)|\leq C(\Lambda,\kappa)r^3.
    \end{equation}
    
    This volume growth property shows that $\mathcal{N}\left(\Lambda,V,\kappa\right)$ satisfies the assumptions of \cite[Theorem 5.4]{Petersen-97}, allowing us to conclude that this class has bounded harmonic $\|*\|_{3,2,r}$-norm, see \cites{Petersen-2006,Petersen-97} for the definition of this norm. As a consequence, this class is precompact in the $C^{0,\gamma}$-topology for any $\gamma\in(0,1)$. Since a sequence of manifolds converging in the $C^{0,\gamma}$ sense must have a tail whose underlying manifolds are diffeomorphic, the class $\mathcal{N}\left(\Lambda,V,\kappa\right)$ supports only finitely many diffeomorphism types.
\end{proof}

For each diffeomorphism type $M$ supporting a metric in $\mathcal{N}\left(\Lambda,V,\kappa\right)$, fix once and for all three closed differential forms $\omega_1,\omega_2,\omega_3\in \Omega^1(M)$ so that $\int_M\omega_1\wedge\omega_2\wedge\omega_3=1$. The existence of such forms follows from condition $(5)$ in Definition \ref{def:toric}. Proposition \ref{prop:Petersen-harmonic-norm-bound} allows us to uniformly control the $L^2$-norm of the forms $\{\omega_i\}_{i=1}^3$.
\begin{cor}\label{cor:uniformly-controlled-cohomology}
    Let $\Lambda,V,\kappa>0$. Then there is a constant $C=C(\Lambda, V,\kappa)$ so that the following holds: if $(M,g)\in \mathcal{N}\left(\Lambda,V,\kappa\right)$, then
    \begin{equation}
        \|\omega_{i}\|_{L^{2}}\leq C,\qquad i=1,2,3.
    \end{equation}
\end{cor}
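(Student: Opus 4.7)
The plan is to proceed by contradiction, leveraging the $C^{0,\gamma}$-precompactness of $\mathcal{N}(\Lambda,V,\kappa)$ established in Proposition~\ref{prop:Petersen-harmonic-norm-bound}. Suppose the stated bound fails, producing a sequence $(M_k, g_k) \in \mathcal{N}(\Lambda, V, \kappa)$ and an index $i \in \{1,2,3\}$ with $\|\omega_i\|_{L^2(g_k)} \to \infty$. Since $\mathcal{N}(\Lambda, V, \kappa)$ contains only finitely many diffeomorphism types, I may pass to a subsequence for which all $M_k$ coincide with a single fixed manifold $M$, equipped with a single fixed triple of reference forms $\omega_1, \omega_2, \omega_3$. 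Applying precompactness, I extract a further subsequence and diffeomorphisms $\phi_k: M \to M$ so that $\tilde g_k := \phi_k^* g_k$ converges in $C^{0,\gamma}$ to a limit metric $g_\infty$ on $M$.

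By the naturality of the $L^2$-pairing under pullback, $\|\omega_i\|_{L^2(g_k)} = \|\phi_k^*\omega_i\|_{L^2(\tilde g_k)}$, so the blow-up assumption translates into $\|\phi_k^*\omega_i\|_{L^2(\tilde g_k)} \to \infty$. I expect the main obstacle to be controlling the pulled-back forms $\phi_k^*\omega_i$ uniformly: a priori the sequence $\phi_k$ is not canonical and could behave wildly in any fixed coordinate system. To overcome this, I would invoke the harmonic atlases underlying Petersen's $C^{0,\gamma}$-precompactness result (see Theorem~5.4 of \cite{Petersen-97}): these atlases provide uniform $C^{1,\gamma}$-bounds on the diffeomorphisms $\phi_k$ relative to the limit metric $g_\infty$. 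Since each $\omega_i$ is a fixed smooth $1$-form on $M$, this translates into the uniform pointwise bound $\sup_M |\phi_k^*\omega_i|_{g_\infty} \leq C_1(\Lambda, V, \kappa)$.

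Finally, the $C^0$-convergence $\tilde g_k \to g_\infty$ ensures that for all sufficiently large $k$ one has the pointwise comparabilities $\tilde g_k^{-1} \leq 2\, g_\infty^{-1}$ and $dV_{\tilde g_k} \leq 2\, dV_{g_\infty}$. Combining these with the pointwise bound on $\phi_k^*\omega_i$ yields
\begin{equation*}
    \|\phi_k^*\omega_i\|_{L^2(\tilde g_k)}^2 \;=\; \int_M \tilde g_k^{ab}\,(\phi_k^*\omega_i)_a\,(\phi_k^*\omega_i)_b\, dV_{\tilde g_k} \;\leq\; 4 \,C_1^2\, \Vol_{g_\infty}(M),
\end{equation*}
contradicting the hypothesized blow-up. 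Thus no such sequence exists, establishing the desired uniform constant $C = C(\Lambda, V, \kappa)$.
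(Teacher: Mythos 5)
Your overall strategy --- contradiction combined with the $C^{0,\gamma}$-precompactness from Proposition \ref{prop:Petersen-harmonic-norm-bound} --- matches the paper's, and you correctly isolate the one delicate point: Cheeger--Gromov convergence only provides \emph{some} diffeomorphisms $\phi_k$ with $\phi_k^*g_k\to g_\infty$, while the $\omega_i$ are fixed on $M$ independently of $k$. (The paper's own proof never introduces the $\phi_k$; it treats the convergence as convergence of tensors on the fixed $M$ and applies dominated convergence.) The gap lies in your proposed fix. The claim that the harmonic-atlas construction yields uniform $C^{1,\gamma}$ bounds on $\phi_k$ \emph{as self-maps of $(M,g_\infty)$} is not a consequence of precompactness: the convergence theory controls $\phi_k^*g_k$ in harmonic charts of $g_k$, but says nothing about how $\phi_k$ sits relative to the fixed identification of the manifolds with the model on which the $\omega_i$ were chosen, because the gauge freedom is the full diffeomorphism group. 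Your bound $\sup_M|\phi_k^*\omega_i|_{g_\infty}\le C_1$ is essentially the conclusion of the corollary in disguise, so the argument is circular at this step.

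Concretely, take $M=\mathbb{T}^3=\mathbb{R}^3/\mathbb{Z}^3$, $\omega_i=d\theta^i$, and $g_k=(m_km_k^T)_{ij}\,d\theta^i\otimes d\theta^j$ with $m_k\in\mathrm{SL}(3,\mathbb{Z})$, $\|m_k\|\to\infty$. Each $g_k$ is isometric to the standard flat torus via the linear map induced by $m_k$, so the sequence lies in a single class $\mathcal{N}(\Lambda,V,\kappa)$ and converges in the Cheeger--Gromov $C^{0,\gamma}$ sense; yet $\|d\theta^i\|_{L^2(g_k)}^2=\left((m_km_k^T)^{-1}\right)^{ii}\to\infty$ for some $i$, and any gauges $\phi_k$ realizing the convergence must satisfy $d\phi_k\approx m_k^{-T}O_k$ for orthogonal $O_k$, hence are unbounded in $C^1$. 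This shows your key lemma fails, and more broadly that the uniform bound can only hold for a choice of generators of $H^1(M;\mathbb{Z})$ adapted to the metric rather than fixed once per diffeomorphism type (which is all that the application in Theorem \ref{Thm-Torus Stability} actually requires). To repair your argument you would need either to prove the bound for such a metric-dependent choice of classes, or to establish convergence of the $g_k$ as tensors on the fixed $M$ without diffeomorphisms --- the assumption the paper's proof makes implicitly.
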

\begin{proof}
    We argue by contradiction, using the $C^{0,\gamma}$, for $\gamma\in(0,1)$, pre-compactness of $\mathcal{N}\left(\Lambda,V,\kappa\right)$ mentioned in the proof of Proposition \ref{prop:Petersen-harmonic-norm-bound}. Assuming there is no constant $C$ uniformly bounding $\|\omega_{i}\|_{L^{2}}$ for all $(M,g)$ in $\mathcal{N}\left(\Lambda,V,\kappa\right)$, we may find a sequence $\{(M_l,g_{l})\}^{\infty}_{l=1}$ so that
    \begin{equation}
        \lim_{l\rightarrow\infty}\|\omega_{i}\|_{L^{2}(M_l,g_{l})}=\infty
    \end{equation}
    for $i=1,2,3$. As a consequence of $C^{0,\gamma}$ pre-compactness, we may pass to a subsequence and assume $M_l$ are diffeomorphic to a single manifold $M$ and that $\{(M,g_{l})\}$ converges in the $C^{0,\gamma}$ topology to a limiting Riemannian manifold $(M,g_\infty)$. 
    By the nature of this convergence, $g_{\infty}$ is metric of $C^{0,\gamma}$ regularity. Since the $1$-forms $\omega_i$ are independent of the metric and smooth, that there exists a constant $K$ such that $\max_M|\omega_{i}|_{g_\infty}\leq K$. It follows from the Dominated Convergence Theorem that
    \begin{equation}
        \lim_{l\rightarrow\infty}\|\omega_{i}\|_{L^{2}(g_{l})}=\|\omega_{i}\|_{L^{2}(g_{\infty})}\leq KV,
    \end{equation}
    where the last inequality follows from the fact that $\Vol_{g_{\infty}}(M)\leq V$. This is a contradiction, and we conclude the desired result.
\end{proof}

We are now prepared to prove the stability result Theorem \ref{Thm-Torus Stability}.

\begin{proof}[Proof of Theorem \ref{Thm-Torus Stability}]
    
    Observe that the integral bound on $\mathrm{Rc}_g$ implies that $\|R^{-}_{g}\|_{L^3}$ is bounded uniformly by $\kappa$. Using H{\"o}lder's inequality, it follows that $\|R^{-}_{g}\|_{L^2}$ is bounded in terms of $V$ and $\kappa$, a fact we will make use of below. Let $\{u^i\}_{i=1}^3$ be harmonic maps to $S^1$ so that the pull-backs $du^i:= (u^i)^*d\theta$ are cohomologous to the fixed forms $\{\omega_i\}_{i=1}^3$. Since harmonic forms minimize the $L^2$ norm in their cohomology class, Corollary \ref{cor:uniformly-controlled-cohomology} implies that
    \begin{equation}
        \|du^{i}\|_{L^{2}}\leq C_1(\Lambda,V,\kappa).
    \end{equation}
    
    At this point, we are ready to apply the arguments from Sections \ref{sec-IntegralEstimates} and \ref{sec-MassEstimates}. Let us begin by applying the integral identity \eqref{Scalar-formula} to find
    \begin{equation}\label{e:sterninproof}
        \int_M R^{-}(g)|du^i|dV_{\bar g}\geq\int_M\frac{|\nabla du^i|^2}{|du^i|}dV_{g},\qquad i=1,2,3.
    \end{equation}
    Similar to \eqref{e:sobolevwithmass}, we may combine \eqref{e:sterninproof}, H{\"o}lder's inequality, and the fact that $R^{-}(g)$ is $L^2$ bounded so that we may apply the Sobolev inequality to $\sqrt{|du^i|}$, yielding $\|du^{i}\|_{L^{3}}\leq C_2(\Lambda,V,\kappa)$. Now Theorem \ref{SecondOrderEstimate} with $\Omega=M$ shows that $\|\nabla du^{i}\|_{L^{2}}\leq C_3(\Lambda,V,\kappa)$. In turn, the uniform Sobolev constant yields $\|du^{i}\|_{L^{6}}\leq C_4(\Lambda,V,\kappa)$.
    
    Now, we return to the integration by parts
    \begin{equation}
        \begin{split}
            \int_M|\nabla du^{i}|^{3}dV_{g}&=-\int_M\nabla du^{i}\left(du^{i},\nabla |\nabla du^{i}|\right)+|\nabla du^{i}|\nabla^{*}\nabla du^{i}(du^{i})dV_{g}
            \\
            &\leq2\int_M\left|du^{i}\right|\left|\nabla du^{i}\right|\left|\nabla^2du^{i}\right|dV_{g}.
        \end{split}
    \end{equation}
    We may now use the generalized H{\"o}lder's inequality with exponents $6$, $3$, and $2$, respectively, to find
    \begin{equation}
        \int_M|\nabla du^{i}|^{3}dV_{g}\leq 2\|du^{i}\|_{L^{6}}\|\nabla du^{i}\|_{L^{3}}\|\nabla^2du^{i}\|_{L^2}.
    \end{equation}
    Dividing out both sides of the above by $\|\nabla du^{i}\|_{L^{3}}$ gives us that
    \begin{equation}\label{e:2ndordertorus}
        \|\nabla du^{i}\|^{2}_{L^{3}}\leq2\|du^{i}\|_{L^{6}}\|\nabla^2du^{i}\|_{L^{2}}.
    \end{equation}
    As in the proof of Theorem \ref{Third-ThirdOrderEstimate}, \eqref{e:2ndordertorus} can be used to show that $\|\nabla^2du^i\|_{L^2}\leq C_5(\Lambda,V,\kappa)$. Since $\|\nabla du^{i}\|_{L^2}\leq C_3(\Lambda,V,\kappa)$, the above allows us to apply the Sobolev inequality to $|\nabla du^{i}|$, yielding $\|\nabla du^{i}\|_{L^6}\leq C_6(\Lambda,V,\kappa)$. Finally, this last bound may be used with Theorem \ref{SobolevToMorrey}, showing the pointwise control
    \begin{equation}
        \left|du^{i}\right|_{g}\leq C_7(\Lambda,V,\kappa).
    \end{equation}
    As a second consequence of the $L^6$ bound on $|\nabla du^i|$, there is the following interpolation inequality, for any $q\in[2,6]$,
    \begin{equation}\label{e:hessianLqtorus}
        \|\nabla du^{i}\|_{L^{q}}\leq\|\nabla du^{i}\|_{L^{2}}^{\theta}\|\nabla du^{i}\|_{L^{6}}^{1-\theta}\leq C_8(\theta,\Lambda,V,\kappa)\|R^{-}(g)\|^{\frac{\theta}{2}}_{L^{1}},
    \end{equation}
    where $\tfrac{1}{q}=\tfrac{\theta}{2}+\tfrac{1-\theta}{6}$.

    Since the wedge product of $\{\omega_i\}_{i=1}^3$ generates the top integral cohomology of $M$, 
    \begin{equation}
        \int_M du^{1}\wedge du^2\wedge du^{3}=1.
    \end{equation}
    In addition, we have
    \begin{equation}
        \nabla\left(du^{1}\wedge du^2\wedge du^{3}\right)=\nabla du^{1}\wedge du^2\wedge du^{3}+du^1\wedge\nabla du^2\wedge du^3+du^{1}\wedge du^2\wedge\nabla du^3,
    \end{equation}
    so our $C^0$ control on $du^i$ allows us to apply Hadamard's inequality to find
    \begin{equation}\label{e:prevolcontrol}
        \begin{split}
            &\left|\nabla g\left(\nabla u^{i},\nabla u^{j}\right)\right|\leq C_3(\Lambda,V,\kappa)\left(\left|\nabla^2 u^{i}\right|+\left|\nabla^2u^{j}\right|\right)
            \\
          &\left|\nabla\left(du^{1}\wedge du^2\wedge du^{3}\right)\right|\leq C_4(\Lambda,V,\kappa)\left(\left|\nabla du^1\right|+\left|\nabla du^2\right|+\left|\nabla du^3\right|\right),
        \end{split}
    \end{equation}
    where we denote by $\nabla u^i$ the vector field corresponding to $du^i$.
    Considering the non-negative function $f$ on $M$ defined by
    \begin{equation}
        du^{1}\wedge du^2\wedge du^{3}=fdV_{g},
    \end{equation}
    then \eqref{e:hessianLqtorus} and \eqref{e:prevolcontrol} imply
    \begin{equation}\label{e:volformcontrol}
    \begin{split}
        &\|f\|_{L^{\infty}}\leq C_5(\Lambda,V,\kappa),
        \\
        &\|\nabla f\|_{L^{q}}\leq C_6(\Lambda,V,\kappa)\left(\|\nabla du^1\|_{L^{q}}+\|\nabla du^2\|_{L^{q}}+\|\nabla du^3\|_{L^{q}}\right)
    \end{split}
    \end{equation}
    for any $q\in[2,6]$.
    
    Combining \eqref{e:prevolcontrol} and \eqref{e:volformcontrol}, and applying Theorem \ref{SobolevToMorrey}, we find
    \begin{equation}\label{e:fcontrolc0}
    \begin{split}
        &\|f-\frac{1}{|M|}\|_{C^{0,\gamma}}\leq C_7(\Lambda,V,\kappa)\left(\|\nabla du^1\|_{L^{q}}+\|\nabla du^2\|_{L^{q}}+\|\nabla du^3\|_{L^{q}}\right)
        \\
        &\left\| g\left(\nabla u^{i},\nabla u^{j}\right)-g\left(\nabla u^{i},\nabla u^{j}\right)_{M}\right\|_{C^{0,\gamma}}\leq C_8(\Lambda,V,\kappa)\|\nabla du^{i}\|_{L^{q}},
    \end{split}
    \end{equation}
    where $\gamma=\frac{q-3}{q}$. A technical remark is required concerning the second line of \eqref{e:fcontrolc0}. Note that technically Theorem \ref{SobolevToMorrey} only estimates the $C^{0,\gamma}_R$ norm. However, in the present context, the $C^{0,\gamma}_R$ norm of a function differs from its $C^{0,\gamma}$ norm only by the uniformly controlled $R=R(\kappa, V)$ and the $C^0$ norm of that function. 
    
    As an immediate consequence of the first line of \eqref{e:fcontrolc0}, if  $||R^-_{g}||_{L^1}$ is sufficiently small relative to $\Lambda,\kappa,$ and $V$, $f$ cannot vanish since $\tfrac{1}{|M|}\geq\tfrac{1}{V}$. Therefore, the map $\Psi:M\to T^3$ given by 
    \begin{equation}
    \Psi(x):=(u^{1}(x),u^{2}(x),u^{3}(x))
    \end{equation}
    is a local diffeomorphism. Since the degree of $\Psi$ is $1$, it is in fact a global diffeomorphism.
    Now consider the coefficients
    \begin{equation}
        a^{ij}=g\left(\nabla u^{i},\nabla u^{j}\right)_{M}=\dashint_Mg\left(\nabla u^{i},\nabla u^{j}\right)dV_{{g}}.
    \end{equation}
    In light of \eqref{e:fcontrolc0}, the fields $\left\lbrace\nabla u^{i}\right\rbrace_{i=1}^{3}$ forms a frame for $TM$ if $||R^-_{g}||_{L^1}$ is sufficiently small relative to $\Lambda,\kappa,$ and $V$.
    Consequently, the constant symmetric tensor on the torus $\mathbb{T}^3$ given by 
    \begin{equation}
        \delta_{F}^{-1}=a^{ij}\frac{\partial}{\partial\theta^{i}}\otimes\frac{\partial}{\partial\theta^{j}}
    \end{equation}
   where $\theta^i$ denotes the angular coordinate of the $i^{\mathrm{th}}$ circle in $\mathbb{T}^3$, defines a flat metric $\delta_F$ for small $\|R^-(g)\|_{L^{1}}$.
   Now, in the frame $\{\nabla u^i\}_{i=1}^3$ we may express $g^{ij}=g\left(\nabla u^{i},\nabla u^{j}\right)$.
    Finally, set $g_{F}=\Psi^*\delta_{F}.$
    Applying the second line of \eqref{e:fcontrolc0} with Theorem \ref{SobolevToMorrey}, we see that
    \begin{equation} \label{e:invmetricest}
        \|g^{ij}-g_{F}^{ij}\|_{C^{0,\gamma}(M)}<\frac{\varepsilon}{2}
    \end{equation}
    when $\|R^{-}(g)\|_{L^{1}}$ is small enough relative to $\Lambda,V,$ and $\kappa$. Similar to the final lines in the proof of Theorem \ref{Thm-Mass Stability}, inequality \eqref{e:invmetricest} implies $\|g_{ij}-\left(g_{F}\right)_{ij}\|_{C^{0,\gamma}}<\varepsilon$ for sufficiently small $\varepsilon$, finishing the argument.
\end{proof}

Lastly, we show Corollary \ref{Cor-SequentialTorusStability}.

\begin{proof}[Proof of Corollary \ref{Cor-SequentialTorusStability}]
In light of Theorem \ref{Thm-Torus Stability}, a diagonal argument shows that it suffices to show that the collection of flat Riemannian $3$-tori in $\mathcal{N}(\Lambda, V,\kappa)$ is compact in the $C^{0,\gamma}$-topology. This follows, for instance, by Proposition \ref{prop:Petersen-harmonic-norm-bound}. Indeed, one may fix coordinates on $\mathbb{T}^3$ and represent any flat metric $g$ as a constant matrix $g_{ij}d\theta^i\otimes d\theta^j$ .
\end{proof}

In light of Theorem \ref{Thm-Torus Stability}, a diagonal argument shows that it suffices to establish the following.

\begin{prop}
Given $V,\Lambda>0$ and $\alpha\in[1,\tfrac{3}{2}]$, the collection of flat Riemannian $3$-tori with Neumann $\alpha$-isoperimetric constant at least $\Lambda$ and volume within $[V^{-1},V]$ is compact in the $C^{\infty}$-topology (in particular in $C^{0,\gamma}$).
\end{prop}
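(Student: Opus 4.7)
\emph{Proof plan.} The plan is to identify flat $3$-tori with their defining lattices in $\R^3$ and invoke Mahler's compactness theorem. Up to isometry, a flat Riemannian $3$-torus is given by $(\R^3/\Gamma, g_{\mathrm{flat}})$ for some full-rank lattice $\Gamma\subset \R^3$, with moduli space $O(3)\backslash GL(3,\R)/GL(3,\mathbb Z)$. Mahler's theorem guarantees that any family of lattices with covolumes in $[V^{-1},V]$ and systoles bounded below by some fixed $s_0 > 0$ is precompact in this space. For any convergent subsequence $\Gamma_k \to \Gamma_\infty$, one can choose bases $\{v_1^{(k)},v_2^{(k)},v_3^{(k)}\}$ of $\Gamma_k$ converging to a basis $\{v_1^\infty,v_2^\infty,v_3^\infty\}$ of $\Gamma_\infty$; the associated linear isomorphisms $\R^3\to\R^3$ descend to diffeomorphisms $\Tor^3 \to \R^3/\Gamma_k$ pulling the flat metric back to the constant coefficient quadratic form with matrix $\bigl(v_i^{(k)}\cdot v_j^{(k)}\bigr)$ on $\Tor^3$. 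These constant matrices converge entrywise, so the pulled-back metrics converge in $C^\infty$ automatically. It thus suffices to bound the systole of $\Gamma$ below by a positive constant depending only on $\Lambda$, $V$, and $\alpha$.

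For this systole bound, I will exhibit an explicit slab competitor for the Neumann isoperimetric constant. Let $v\in\Gamma$ be a shortest nonzero vector, set $s := |v| = \mathrm{sys}(\Gamma)$, and let $\Gamma'\subset v^\perp\cong \R^2$ be the orthogonal projection of $\Gamma$, which is a rank-$2$ lattice of covolume $V_\Gamma/s$ (here $V_\Gamma := \mathrm{covol}(\Gamma) \in [V^{-1},V]$). The induced map $\pi : \R^3/\Gamma \to \R^2/\Gamma'$ is then a Riemannian submersion whose fibers are closed geodesics of length $s$. By Minkowski's first theorem applied to Euclidean disks in $\R^2$, the lattice $\Gamma'$ contains a primitive vector $e_2'$ of length at most $2\sqrt{V_\Gamma/(\pi s)}$; complete it to a basis $\{e_1', e_2'\}$ of $\Gamma'$. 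The half fundamental parallelogram
\begin{equation*}
    \Omega' \;=\; \{t_1 e_1' + t_2 e_2' \,:\, t_1\in[0,\tfrac{1}{2}],\; t_2\in[0,1]\} \,\subset\, \R^2/\Gamma'
\end{equation*}
has area $V_\Gamma/(2s)$, and its boundary in $\R^2/\Gamma'$ consists of two parallel closed curves of total length $2|e_2'|$. Pulling back by the Riemannian submersion, the region $\Omega := \pi^{-1}(\Omega') \subset \R^3/\Gamma$ has volume $V_\Gamma/2$ and common boundary with its complement of area $2s|e_2'|$, whence
\begin{equation*}
    IN_\alpha(\R^3/\Gamma) \;\le\; \frac{2s|e_2'|}{(V_\Gamma/2)^{1/\alpha}} \;\le\; \frac{4\sqrt{sV_\Gamma/\pi}}{(V_\Gamma/2)^{1/\alpha}}.
\end{equation*}
Combining this with the hypothesis $IN_\alpha \ge \Lambda$ and the bounds $V_\Gamma \in [V^{-1},V]$ converts it into the required lower bound $s \ge s_0(\Lambda,V,\alpha) > 0$.

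The main obstacle is precisely the slab construction above: one must produce a dividing surface whose area is controlled by the short direction $v$ rather than by the potentially large diameter of $\R^2/\Gamma'$, and for this the Minkowski bound on a basis vector of $\Gamma'$ is essential. Everything else --- the Fubini-type volume and area identities for a Riemannian submersion with totally geodesic circle fibers, Mahler's compactness, and the triviality of $C^\infty$-convergence for constant quadratic forms on $\Tor^3$ --- is standard.
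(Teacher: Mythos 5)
Your proof is correct, and it takes a genuinely different route from the paper's. The paper works directly with a generating set $\{X_i\}_{i=1}^3$ of the lattice: it uses the coordinate $2$-tori spanned by pairs $X_i,X_j$ as isoperimetric competitors to obtain $|X_i\wedge X_j|\geq\tfrac{\Lambda}{2}|X_1\wedge X_2\wedge X_3|^{1/\alpha}$, combines this with an identity for the volume growth of a cubical domain to bound the $|X_i|$ above and below and the angles away from $0$ and $\pi$, and then extracts subsequential limits of the basis vectors by hand. You instead reduce everything to a single systole lower bound and delegate the compactness to Mahler's criterion. Your key competitor --- the slab $\pi^{-1}(\Omega')$ over a half fundamental domain of the projected lattice $\Gamma'$, with Minkowski's bound $|e_2'|\le 2\sqrt{V_\Gamma/(\pi s)}$ ensuring the two dividing $2$-tori have total area $4\sqrt{sV_\Gamma/\pi}$ --- cleanly converts the hypothesis $IN_\alpha\geq\Lambda$ into $s\geq \pi\Lambda^2(V_\Gamma/2)^{2/\alpha}/(16V_\Gamma)$, and the Fubini identities for the Riemannian submersion with totally geodesic circle fibers of length $s$ are exactly as you state. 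Your route is arguably the more robust of the two: the paper's identity $\tfrac{|X_1\wedge X_2|}{|X_2|}+\tfrac{|X_1\wedge X_3|}{|X_3|}=2\tfrac{|X_1\wedge X_2\wedge X_3|}{|X_2||X_3|}$ rests on the relation $\partial_r|\mathcal{C}_r|=\tfrac12|\partial\mathcal{C}_r|$, which requires the growth directions to be normal to the faces and fails for skew bases (e.g.\ $X_1=(1,0,0)$, $X_2=(1,1,0)$, $X_3=(0,0,1)$ gives $\tfrac{1}{\sqrt2}+1\neq\sqrt2$), whereas your slab construction and the Minkowski step have no analogous hidden orthogonality assumption. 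One shared loose end: both arguments establish precompactness, and for genuine compactness one should add that $\mathrm{Vol}$ and $IN_\alpha$ pass to the limit under $C^0$-convergence of the constant-coefficient metrics, so the limiting flat torus remains in the class; this is immediate but worth a sentence.
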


\begin{proof}
Suppose $\mathcal{T}\in \mathcal{N}(\Lambda, V,\kappa)$ is a flat torus, which we may be represent as a quotient $\mathbb{R}^3/\Gamma$ of $3$-space by a lattice $\Gamma$ generated by three vectors $\{X_i\}_{i=1}^3$. The main goal is to show $(1)$ the lengths of $X_i$ are bounded above and away from $0$ in terms of $\Lambda,V$ and $(2)$ the angles between $X_i,X_j$ are bounded away from $0$ and $\pi$ in terms of $\Lambda,V$. Upon accomplishing this, one obtains the desired sequential compactness of flat tori in $\mathcal{N}(\Lambda, V,\kappa)$ by the following argument: given a family of such tori, one applies $(1)$ to find a subsequential limit of the underlying vectors to three non-zero vectors, then noting that $(2)$ ensures these vectors form a lattice of full rank. The maps to the limiting torus which yield the $C^\infty$ convergence are given by linear mappings, taking the lattice elements to their associated limits.

First, we'll need a few computational observations. The volume of $\mathcal{T}$ is computed by $|X_1\wedge X_2\wedge X_3|$. Meanwhile, the planes spanned by $X_i,X_j$ descend to $2$-dimensional tori with areas $|X_i\wedge X_j|$. Using one of these tori and an appropriate translate, one can cut $\mathcal{T}$ into two pieces of equal volumes and conclude
\begin{equation}\label{e:flattoriiso}
    |X_i\wedge X_j|\geq\frac{\Lambda}{2}|X_1\wedge X_2\wedge X_3|^{\frac1\alpha}
\end{equation}
for $i\neq j$.

For $r>0$, consider the cubical domains $\mathcal{C}_r\subset \mathcal{T}$ given by 
\begin{equation}
    \mathcal{C}_r=\left\{aX_1+b\frac{X_2}{|X_2|}+c\frac{X_3}{|X_3|}\colon a\in[0,1],\;b\in[0,r],\; c\in[0,r]\right\}.
\end{equation}
Notice that the faces of $\mathcal{C}_r$ where $a=0$ and $a=1$ are identified by the lattice $\Gamma$.
When $r<\min(|X_2|,|X_3|)$, one may compute
\begin{equation}
    |\mathcal{C}_r|=r^2\frac{|X_1\wedge X_2\wedge X_3|}{|X_2||X_3|},\quad |\partial\; \mathcal{C}_r|=2r\frac{|X_1\wedge X_2|}{|X_2|}+2r\frac{|X_1\wedge X_3|}{|X_3|}.
\end{equation}
On the other hand, $\partial_r|\mathcal{C}_r|=\tfrac12|\partial \mathcal{C}_r|$. Combining these three equations, we have 
\begin{equation}\label{e:linearalgident}
    \frac{|X_1\wedge X_2|}{|X_2|}+\frac{|X_1\wedge X_3|}{|X_3|}=2\frac{|X_1\wedge X_2\wedge X_3|}{|X_2||X_3|}.
\end{equation}
After performing straight-forward manipulations, the identity \eqref{e:linearalgident} used with two applications of \eqref{e:flattoriiso} yields
\begin{equation}
    |X_1\wedge X_2\wedge X_3|^{\tfrac{\alpha-1}{\alpha}}\geq\frac{\Lambda}{4}\left(|X_3|+|X_2|\right).
\end{equation}
Since this inequality is symmetric in $X_1,X_2,X_3$, we may conclude an upper bound $|X_i|\leq 4 V^{\tfrac{\alpha-1}{\alpha}}/\Lambda$ for $i=1,2,3$. 
Using this, a lower bound for the lengths may be obtained by 
\begin{equation}
    \frac1V\leq|X_1\wedge X_2\wedge X_3|\leq |X_1||X_2||X_3|\leq |X_i|\frac{16 V^{\tfrac{2\alpha-2}{\alpha}}}{\Lambda^2}
\end{equation}
for any $i=1,2,3$. To summarize these last two observations, there is a constant $C(\Lambda,V)>0$ so that $C(\Lambda,V)^{-1}\leq |X_i|\leq C(\Lambda,V)$. 

All that is left is to establish that the angles $\theta_{ij}$ between $X_i,X_j$ are bounded away from $0$ and $\pi$ in terms of $\Lambda, V$. This is accomplished by combining the isoperimetric inequality with the length bounds as follows
\begin{align}
    \sin^2(\theta_{ij})&=1-\cos^2(\theta_{ij})\\
    {}&=\frac{|X_i|^2|X_j|^2-\langle X_i,X_j\rangle^2}{|X_i|^2|X_j|^2}\\
    {}&=\frac{| X_i\wedge X_j|^2}{|X_i|^2|X_j|^2}\\
    {}&\geq C(\Lambda,V)^{-4}\left(\frac{\Lambda}{2 V^{\frac1\alpha}}\right)^2.
\end{align}

\end{proof}

\bibliography{bibliography}

\end{document}